\theoremstyle{plain}
\newtheorem{thm}{Theorem}[section]
\newtheorem{lem}[thm]{Lemma}
\newtheorem{cor}[thm]{Corollary}
\newtheorem{conj}[thm]{Conjecture}
\theoremstyle{definition}
\newtheorem{definition}[thm]{Definition}
\newtheorem{rem}[thm]{Remark}
\newtheorem{ex}[thm]{Example}
\newtheorem*{ack}{Acknowledgement}
\numberwithin{equation}{section}
\newcommand{\Z}{\mathbb{Z}}
\newcommand{\Q}{\mathbb{Q}}
\newcommand{\N}{\mathbb{N}}
\newcommand{\Spec}{\operatorname{Spec}}
\newcommand{\Hom}{{\rm Hom}}
\newcommand{\Dim}{\operatorname{dim}}
\newcommand{\End}{{\rm End}}
\newcommand{\GL}{\mathrm{GL}}
\newcommand{\SL}{\mathrm{SL}}
\newcommand{\fgl}{\mathfrak{gl}}
\newcommand{\fsl}{\mathfrak{sl}}
\newcommand{\fg}{\mathfrak{g}}
\newcommand{\Lie}{\operatorname{Lie}}
\newcommand{\Ker}{{\rm Ker}}
\newcommand{\im}{{\rm Im}}
\newcommand{\Det}{{\rm det}}
\newcommand{\Def}{\overset{{\rm def}}{=}}
\newcommand{\et}{\text{{\rm \'et}}}
\newcommand{\loc}{{\rm loc}}
\newcommand{\Nori}{{\rm N}}
\newcommand{\fppf}{{\rm fppf}}
\newcommand{\G}{\mathbb{G}}
\newcommand{\F}{\mathbb{F}}
\newcommand{\A}{\mathbb{A}}
\newcommand{\C}{\mathbb{C}}
\newcommand{\calM}{\mathcal{M}}
\newcommand{\bfp}{\mathbf{p}}
\newcommand{\bfn}{\mathbf{n}}
\newcommand{\unit}{\mathbf{1}}
\newcommand{\Der}{\mathrm{Der}}
\newcommand{\lto}{\longrightarrow}
\newcommand{\ad}{\mathrm{ad}}
\newcommand{\gr}{\operatorname{gr}}
\definecolor{linkblue}{HTML}{0000EE}
\title{\bf A generalized Abhyankar's conjecture for\\
simple Lie algebras in characteristic $p>5$}
\author{Shusuke Otabe, Fabio Tonini and Lei Zhang}
\date{
\vspace{-5mm}
}
\begin{document}

\maketitle

\begin{abstract}
In the present paper, we study a purely inseparable counterpart of Abhyankar's
conjecture for the affine line in positive characteristic, and prove its validity for all the finite local non-abelian simple group schemes in characteristic $p>5$. The crucial point is how to deal with finite local group schemes which cannot be realized as the Frobenius kernel of a smooth algebraic group. Such group schemes  appear as the ones associated with Cartan type Lie algebras.  We settle the problem for such Lie algebras by making use of natural gradations or filtrations on them. \end{abstract}

\thispagestyle{fancy}
\renewcommand{\headrulewidth}{0pt}
\renewcommand{\footrulewidth}{0.1mm}

\lfoot{{\footnotesize \textit{Date}: \today. 2nd version.\\
\textit{Keywords}: fundamental group schemes, inverse Galois problems, Cartan type Lie algebras\\
\textit{2010 Mathematics Subject Classification}: 14H30, 14L15, 17B50\\~
}}



{
\hypersetup{linkcolor=linkblue}
\tableofcontents

}


\section{Introduction}

Let $k$ be an algebraically closed field of characteristic $p>0$ and $U$ an affine smooth connected curve defined over $k$. 
In \cite{ab57}, Abhyankar proposed a conjectural answer to the \textit{inverse
Galois problem} for finite \'etale covers of $U$, \emph{i.e.}\ the question of which
finite group $G$ occurs as the Galois group of a finite \'etale connected
Galois cover over $U$. In terms of Grothendieck's \textit{\'etale fundamental
groups}~\cite{gr71}, Abhyankar's conjecture can be reformulated as a
description of the set
\[
    \pi_A^\et(U) = \{\textup{finite groups which appear as quotients
    of $\pi_1^{\et}(U)$}\}.
\]
This was affirmatively proved by Raynaud for the affine line $U=\A^1_k$~(cf.\
\cite{ra94}), and by Harbater for general $U$~(cf.\ \cite{ha94}). In the particular case when $U=\A^1_k$, the conjecture can be stated in the following way.  

\begin{thm}{\rm (Raynaud, cf.\ \cite{ra94})}\label{thm:raynaud}
The $\pi_A^\et(\A^1_k)$ is the set of \textit{quasi-$p$}-groups, \emph{i.e.}\ groups generated by all their $p$-Sylow subgroups. 
\end{thm}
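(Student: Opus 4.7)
The plan is to split the assertion into the elementary necessity direction (every finite quotient of $\pi_1^{\et}(\A^1_k)$ is quasi-$p$) and the deep sufficiency direction, which is the original content of Raynaud's theorem.

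For necessity, I would let $G$ be such a quotient and write $p(G)\trianglelefteq G$ for the (normal) subgroup generated by the $p$-Sylow subgroups of $G$. Then $H\Def G/p(G)$ has order prime to $p$, so the intermediate \'etale cover of $\A^1_k$ corresponding to $H$ is tame at the unique missing point $\infty\in\mathbb{P}^1_k$. Compactifying to a branched cover $Y\to\mathbb{P}^1_k$ tamely ramified only over $\infty$ and applying the Riemann--Hurwitz formula forces $Y\cong\mathbb{P}^1_k$ and the cover to be trivial; in other words, $\pi_1^{\mathrm{tame}}(\A^1_k)=1$. Hence $H=1$ and $G=p(G)$.

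For the sufficiency direction I would argue by induction on $|G|$, following Raynaud's strategy. If $G$ admits a proper non-trivial normal quasi-$p$ subgroup $N$, one realizes $G/N$ by induction as a cover $Z\to\A^1_k$ and then builds an $N$-cover on top of $Z$ whose ramification is concentrated over $\infty$, combining induction with a formal patching argument in a neighborhood of infinity. This reduces the problem to the case in which $G$ has no proper normal quasi-$p$ subgroup; elementary group theory then shows that either $G$ has a normal $p$-Sylow with cyclic prime-to-$p$ quotient (handled directly by Artin--Schreier--Witt covers of $\A^1_k$), or the structure of $G$ is governed by a product of non-abelian simple quasi-$p$ groups.

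The heart of the argument, and the main obstacle, is the last subcase. The strategy is to work over a complete discrete valuation ring $R$ with residue field $k$ and to construct a semistable $R$-model whose special fiber is a tree of projective lines: on each component one realizes an auxiliary cover by a proper subgroup of $G$ (produced inductively, together with Harbater's patching over $\mathbb{P}^1_k$), the pieces are glued along the nodes compatibly with the prescribed Galois action, and the resulting rigid cover of a formal disc is smoothed to yield the desired \'etale $G$-cover of $\A^1_k$. The decisive technical difficulty is to match the wild ramification filtrations at the nodes so that the patched pieces fit into a single connected $G$-cover admitting a flat semistable $R$-model whose generic fiber is \'etale outside $\infty$; this is precisely the point at which rigid/formal geometry, combined with the fine arithmetic of wildly ramified extensions in characteristic $p$, becomes indispensable.
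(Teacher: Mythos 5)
The paper does not prove this statement: Theorem \ref{thm:raynaud} is quoted from Raynaud \cite{ra94}, and the introduction only records a one-paragraph outline of his strategy, so your proposal can only be compared against that outline. Your necessity argument is fine: the prime-to-$p$ (hence tame) quotient of $\pi^{\et}_1(\A^1_k)$ vanishes by Riemann--Hurwitz applied to a cover of $\mathbb{P}^1_k$ tamely ramified only over $\infty$, so every finite quotient $G$ equals $p(G)$. The gap is in your reduction step for sufficiency. Raynaud's induction, as described in the paper, reduces modulo a nontrivial normal \emph{$p$-subgroup} $N\triangleleft G$: one realizes $G/N$ by induction and then invokes Serre's theorem \cite{se90}, which solves embedding problems for $\pi^{\et}_1(\A^1_k)$ whose kernel is a $p$-group. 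You instead reduce modulo a proper nontrivial normal \emph{quasi-$p$} subgroup $N$ and propose to ``build an $N$-cover on top'' of the realization of $G/N$ by ``induction with a formal patching argument''. That is an embedding problem with an arbitrary quasi-$p$ kernel; Serre's result does not apply to it, and solving it is essentially as hard as the theorem itself, so this step is unjustified. The subsequent ``elementary group theory'' dichotomy is also off: a quasi-$p$ group with a normal $p$-Sylow subgroup is generated by the conjugates of that Sylow and hence \emph{is} a $p$-group, so the case ``normal $p$-Sylow with cyclic prime-to-$p$ quotient'' is vacuous as stated, and the remaining case is not actually pinned down by the hypothesis that $G$ has no proper nontrivial normal quasi-$p$ subgroup.

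Beyond that, the core of Raynaud's theorem --- constructing the $G$-cover in the remaining case by rigid-analytic patching over a semistable model whose special fibre is a tree of projective lines, realizing proper quasi-$p$ subgroups on the components, and controlling the (wild) behaviour at the nodes so that the patched cover is connected, Galois with group exactly $G$, and \'etale over $\A^1_k$ after smoothing --- is precisely the part you flag as ``the decisive technical difficulty'' without carrying it out. At the level of detail given, this is an accurate description of where the proof lives rather than a proof; the statement should simply be cited to \cite{ra94}, as the paper does.
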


The following are several examples.  

\begin{ex}~\label{ex:quasi-p}
\begin{enumerate}
\renewcommand{\labelenumi}{(\arabic{enumi})}
\item Any $p$-group is a quasi-$p$-group. For $p$-groups, Theorem \ref{thm:raynaud} is an immediate consequence of the fact that the maximal pro-$p$-quotient $\pi^{\et}_1(\A^1_k)^{(p)}$ is isomorphic to the free pro-$p$-group of rank $\# k\ge \#\N$. 

\item For any integer $n\ge 3$, the $n$-th symmetric group $\mathfrak{S}_n$ is
    not a $2$-group, but a quasi-$2$-group. If $n\ge 5$, the $n$-th alternating
    group $\mathfrak{A}_n$ is a  non-abelian simple group. Thus, for any prime
    number $p$ with  $p\mid \#\mathfrak{A}_n$, the simple group
    $\mathfrak{A}_n$ is a quasi-$p$-group. For any positive integer $t$ with
    $p\nmid t$, Abhyankar constructed a finite \'etale connected Galois cover
    over $\A^1_k$ whose Galois group is isomorphic to $\mathfrak{S}_{2+t}$ if
    $p=2$, and to $\mathfrak{A}_{p+t}$  if $p$ and  $t$ are $\ge 3$. See \cite[Theorem 3.4]{ha18} for the detail. 

\item Let $\Sigma$ be a semisimple simply connected algebraic group defined over a finite field $\F_q$ of characteristic $p$. Then the finite group $\Sigma(\F_q)$ is a quasi-$p$-group which is not a $p$-group. In \cite{no94}, Nori proved that $\Sigma(\F_q) \in \pi_A^\et(\A^1_k)$ (cf.\ \cite{no94}\cite[\S3.2]{se92}\cite[Theorem 3.5]{ha18}). 
\end{enumerate}
\end{ex}

Raynaud's proof of Theorem \ref{thm:raynaud} is done by the induction on the order $\# G$ of a given quasi-$p$-group $G$. If $G$ contains a nontrivial normal $p$-subgroup $1\neq N\triangleleft~G$, then by induction hypothesis, one can take a surjective homomorphism $\pi^{\et}_1(\A^1_k)\twoheadrightarrow G/N$, in which case, by applying Serre's solution~\cite{se90} to the \textit{embedding problem} 
\begin{equation*}
\begin{xy}
\xymatrix{
&&&\pi^{\et}_1(\A^1_k)\ar@{->>}[d]&\\
1\ar[r]&N\ar[r]&G\ar[r]&G/N\ar[r]&1,
}
\end{xy}
\end{equation*}   
one can conclude that $G\in \pi_A^\et(\A^1_k)$. Thus, one may assume that $G$ has no nontrivial normal $p$-subgroup. For such a quasi-$p$-group, Raynaud adopted the \textit{rigid analytic patching} method and the use of \textit{semi-stable curves} to construct a finite \'etale connected Galois $G$-cover of $\A^1_k$.

In \cite{ot18}, a \textit{purely inseparable analogue} of Abhyankar's conjecture was proposed as a question~(cf.\ \cite[Question 3.3]{ot18}). The analogue is formulated in terms of the Nori's \textit{fundamental group scheme} $\pi^\Nori$~(cf.~\cite{no76}\cite[Chapter II]{no82}). Namely, it gives a hypothetical description of the set 
\[
\pi_A^\loc(U)=\{\text{finite \textit{local} $k$-group schemes which appear as quotients of } \pi^{\Nori}(U)\}
\]
Here by a finite local $k$-group scheme we mean a finite $k$-group scheme which is connected as a scheme (and therefore topologically a point).
In \cite{ot18}, several evidences were also discussed~(cf.\ \cite[Proposition 3.4 and Corollaries 4.15 and 4.19]{ot18}). In the sequel paper \cite{ot19}, the purely inseparable counterpart was affirmatively settled in the \textit{solvable} case~(cf.~\cite[Corollary 4.33]{ot19}). 

The set $\pi_A^\loc(U)$ can be defined for any reduced $k$-scheme $U$ (even without a rational point) as the set of finite $k$-group schemes appearing as quotients of the \emph{local fundamental group scheme} $\pi^\loc(U)$ (cf. Definition \ref{def: local quotients}). This allows to consider also a \emph{generic} Abhyankar's
conjecture: in \cite[Conjecture II]{RTZ} it is conjectured that $\pi^\loc_A(\Spec k(t))$ is the set of all finite local $k$-group schemes. 

In the present paper, we focus on the affine line $U=\A^1_k$, in which case the generalized Abhyankar's conjecture can be stated in the following way.

\begin{conj}{\rm (cf.~\cite[Question 3.3 for the affine line]{ot18})}\label{conj:PIAC A^1}
The $\pi_A^\loc(\A^1_k)$ is the set of finite local $k$-group schemes with no nontrivial characters.  
\end{conj}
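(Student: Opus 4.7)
The necessary direction is immediate: any nontrivial character $G\to \G_m$ factors through some $\mu_{p^n}$, so if $G$ were a quotient of $\pi^\loc(\A^1_k)$ one would obtain a nontrivial $\mu_{p^n}$-torsor on $\A^1_k$, contradicting $\mathrm{Pic}(\A^1_k)=0$. For the converse, I would induct on $\dim_k\scrO(G)$. Using the purely inseparable analogue of Serre's embedding-problem theorem established in \cite{ot19}, any short exact sequence $1\to N\to G\to Q\to 1$ with solvable kernel $N$ reduces the realization problem for $G$ to that for $Q$, once $Q\in\pi^\loc_A(\A^1_k)$ is known by the induction hypothesis. Iterating this on the maximal normal solvable local subgroup of $G$ and then on semisimple composition factors, one reduces the full conjecture to the case where $G$ is a finite local \emph{simple} $k$-group scheme with no nontrivial character.

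Any such simple $G$ is of height one (its Frobenius kernel is a nonzero normal local subgroup scheme, hence all of $G$), so $\scrO(G)=u(\fg)^*$ for a simple restricted Lie algebra $\fg = \Lie(G)$. In characteristic $p>5$, the Premet--Strade classification says $\fg$ is either \emph{classical}---in which case $G=\Sigma_1$ is the first Frobenius kernel of a simple simply-connected algebraic group $\Sigma$---or of \emph{Cartan type} (Witt, special, Hamiltonian, or contact). The classical case is then accessible via Nori's construction (Example~\ref{ex:quasi-p}(3)) combined with a Frobenius-pullback: a sufficiently non-degenerate map $\A^1_k\to\Sigma^{(p)}$, pulled back along the Frobenius isogeny $\Sigma\to\Sigma^{(p)}$, yields a connected $\Sigma_1$-torsor on $\A^1_k$, producing the desired surjection $\pi^\loc(\A^1_k)\twoheadrightarrow\Sigma_1$.

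The main obstacle, and the technical heart of the paper, is the Cartan type case, where $G$ is provably \emph{not} the Frobenius kernel of any smooth algebraic group, so the preceding strategy fails outright. Following the abstract, the plan is to exploit the natural grading $\fg=\bigoplus_{i\geq -1}\fg_i$ on the Witt, special and Hamiltonian algebras (or the standard filtration on the contact ones), which encodes a $\G_m$-action on $G$ and a descending chain of restricted $p$-subalgebras whose associated graded splits into two kinds of layers: a degree-zero classical Lie algebra (such as $\fsl_n$ or $\fsp_{2n}$) and nilpotent abelian unipotent pieces indexed by $i\neq 0$. One would then assemble a $G$-torsor on $\A^1_k$ by first producing a torsor for the classical $\fg_0$-quotient via the construction of the preceding paragraph, extending successively through the nilpotent graded layers by repeated application of the embedding-problem theorem of \cite{ot19}, and finally deforming from the associated graded back to $G$ itself using the $\G_m$-action coming from the grading. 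I expect the real technical core to be twofold: first, verifying that the no-character condition is preserved at every intermediate stage---which is delicate, since naive candidates for degree-zero quotients such as $(\GL_n)_1$ acquire a determinant character---and second, ensuring that the final extended torsor genuinely recovers $G$ rather than some twisted form of its associated graded.
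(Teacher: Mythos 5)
The statement you are proving is a \emph{conjecture}, and the paper does not prove it in full; it only verifies it for finite local \emph{non-abelian simple} group schemes in characteristic $p>5$ (Theorem \ref{thm:main}), the solvable case having been settled separately in \cite{ot19}. The genuine gap in your proposal is the very first reduction. Your induction passes from $G$ to $G/N$ and back by ``repeated application of the embedding-problem theorem of \cite{ot19}'', but that result concerns \emph{solvable} kernels only; the step where you iterate ``on semisimple composition factors'' requires solving embedding problems over $\A^1_k$ whose kernel is a non-abelian simple local group scheme (or a product of such), and no such theorem is available --- this is precisely why the general conjecture remains open and why the paper restricts to simple $G$. (Your necessary direction is fine, and is the content of \cite[Proposition 3.1]{ot18}.)

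Within the simple case your outline also diverges from what actually works. For Cartan type algebras the paper does \emph{not} build a torsor for a degree-zero classical quotient, extend through nilpotent graded layers, and then deform from $\gr L$ back to $L$; none of those three steps is carried out or even well posed (e.g.\ $L_0$ is not a quotient of $L$ since $L$ is simple, and there is no mechanism for ``deforming'' a torsor for $\mathfrak{G}(\gr L)$ into one for $\mathfrak{G}(L)$). The real obstacle, which your sketch does not engage, is that a torsor obtained by pulling back Frobenius along a product of exponentials of nilpotent elements need not have classifying image equal to $\Gamma$, because $\exp(B)A\exp(-B)\neq\exp(\ad B)A$ in characteristic $p$ (Remark \ref{rem:AC Cartan}, Example \ref{ex:Rumynin}). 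The paper's solution is the criterion of Theorem \ref{thm:AC FLA}: one chooses generating subspaces $U^-$ (abelian, with nilpotent action) and $U^+$ (spanned by elements $D$ with $\ad(D)^2=0$, i.e.\ sandwich elements), uses Artin--Hasse exponentials into Witt vector groups, and proves via the conjugation Lemma \ref{lem:AC FLA} that the image is exactly $\Gamma$. Verifying the existence of such $U^\pm$ for each Witt, Special, Hamiltonian, Contact and Melikian algebra --- including the non-graded filtered deformations, which is where the restriction $p>5$ enters --- is the technical core, and it is absent from your plan.
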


Note that the `only if' part is always true~(cf.~\cite[Proposition 3.1]{ot18}).
Conjecture \ref{conj:PIAC A^1} implies that $G\in \pi_A^\loc(\A^1_k)$ if and only if $G^{ab}\in \pi_A^\loc(\A^1_k)$ for a finite local $k$-group scheme $G$, where $G^{ab}$ is the maximal abelian quotient of $G$. We verify Conjecture \ref{conj:PIAC A^1} for finite local non-abelian \textit{simple} $k$-group schemes at least when $k$ has sufficiently large characteristic $p$. Namely, as the main theorem of the present paper, we will prove the following result.

\begin{thm}{\rm (cf.~Corollary \ref{cor:AC Cartan})}\label{thm:main} 
Let $k$ be an algebraically closed field of characteristic $p>5$ and $G$ a finite local non-abelian simple $k$-group scheme. Then $G\in \pi_A^\loc(\A^1_k)$.
\end{thm}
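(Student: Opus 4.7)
Since $G$ is local and nontrivial, its first Frobenius kernel $G_1 \subseteq G$ is a nontrivial normal closed subgroup scheme, so the simplicity of $G$ forces $G_1 = G$, i.e.\ $G$ has height $1$. Under the equivalence between height-$1$ finite $k$-group schemes and finite-dimensional restricted Lie algebras, $G$ is determined by $\fg = \Lie(G)$, which is simple and non-abelian. Since $p > 5$, the Premet--Strade classification applies: $\fg$ is either \emph{classical} --- a central quotient of $\Lie(\Sigma)$ for a simple simply-connected algebraic $\F_p$-group $\Sigma$ --- or of \emph{Cartan type}, namely $W_n$, $S_n$, $H_n$ or $K_n$. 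Accordingly the theorem splits into two cases.

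\textbf{Classical case.}
When $\fg$ is classical, $G$ is a quotient of the first Frobenius kernel $\Sigma_1$ of the smooth algebraic $\F_p$-group $\Sigma$. Such Frobenius kernels of smooth algebraic groups already lie in $\pi_A^\loc(\A^1_k)$: combining Nori's étale realisation of $\Sigma(\F_p)$ as in Example~\ref{ex:quasi-p}(3) with a Frobenius-descent construction, or directly invoking the techniques of \cite{ot19}, produces a $\Sigma_1$-torsor over $\A^1_k$; since any quotient of a member of $\pi_A^\loc(\A^1_k)$ is again a member, $G \in \pi_A^\loc(\A^1_k)$.

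\textbf{Cartan case --- the main obstacle.}
The Cartan type group schemes are, as emphasised in the abstract, not Frobenius kernels of any smooth algebraic group, so the strategy just used is unavailable. The plan is to exploit the natural grading $\fg = \bigoplus_{i \geq -1} \fg_i$ and the associated standard filtration $\fg = \fg_{(-1)} \supset \fg_{(0)} \supset \fg_{(1)} \supset \cdots$. The graded zero-piece $\fg_0$ is a classical reductive Lie algebra (for instance $\fgl_n$ for $W_n$, $\fsl_n$ for $S_n$, $\fsp_{2m}$ for $H_{2m}$), and $\fg_{(1)}$ is a $p$-nilpotent ideal of $\fg_{(0)}$, while the higher $\fg_{(i)}$ are subalgebras of $\fg$ which are \emph{not} ideals of $\fg$. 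Although the non-normality of the filtration blocks a naïve embedding-problem induction on $G$, the grading endows $G$ with a compatible $\G_m$-action whose fixed subgroup scheme realises the classical quotient $\fg_{(0)}/\fg_{(1)} \cong \fg_0$, covered by the classical case above. The idea is then to realise $G$ as the infinitesimal automorphism group scheme of a natural local model --- the truncated polynomial algebra $k[x_1,\dots,x_n]/(x_i^p)$ equipped with suitable extra structure (a volume form, a symplectic form, or a contact form for $S_n$, $H_n$, $K_n$ respectively) --- and to construct a family of such local structures over $\A^1_k$ whose Galois group scheme is exactly $G$. The filtration is the organising principle for the construction: starting from a cover of $\A^1_k$ realising the classical quotient via the classical case, one successively refines it along each graded piece $\fg_i$, each refinement being a $p$-nilpotent (hence solvable) extension accessible via \cite{ot19}.

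\textbf{Main difficulty.}
The delicate point will be to show that the torsor assembled from the filtration has its full Galois group scheme equal to $G$, rather than to some proper subgroup. Because the $\fg_{(i)}$ are not ideals of $\fg$, one cannot argue by a clean induction using normal subgroups; instead the construction must be performed globally, and the simplicity of $\fg$, combined with the specific geometric realisation of each Cartan type, must be used to pin down the structure group scheme as all of $G$.
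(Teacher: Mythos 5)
Your reduction to restricted Lie algebras and your treatment of the classical case match the paper's (the paper invokes Theorem \ref{thm:AC Sigma}, i.e.\ \cite[Corollary 4.19]{ot18}, for Frobenius kernels of split semisimple simply connected groups and then passes to quotients). The gap is in the Cartan case, which is the entire content of the theorem. Your plan is to start from the classical quotient $\fg_{(0)}/\fg_{(1)}\cong\fg_0$ and ``successively refine along each graded piece, each refinement being a $p$-nilpotent (hence solvable) extension accessible via \cite{ot19}.'' But the embedding-problem machinery of \cite{ot19} requires the kernel of each extension to be a \emph{normal} subgroup scheme of the group being realized, and --- as you yourself note --- the $\fg_{(i)}$ are not ideals of $\fg$; since $\fg$ is simple it has no nontrivial ideals at all, so there is no tower of normal extensions climbing from $\fg_0$ up to $\fg$ and the induction never gets off the ground. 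Your closing paragraph names the right difficulty (pinning the structure group down to all of $G$ rather than a proper subgroup) but supplies no mechanism for it, and the paper's Examples \ref{ex:Rumynin} and \ref{ex:counter-example in char p=3} show this is not a formality: a naive choice of unipotent generators genuinely produces a torsor that does \emph{not} reduce to $\Gamma$.

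What the paper actually does is different in kind. It proves a generation criterion (Theorem \ref{thm:AC FLA}, Corollary \ref{cor:AC GLA}): if $L$ admits subspaces $U^-$ (abelian, with a basis acting nilpotently) and $U^+$ (with a basis of \emph{square-zero} elements in the chosen representation) which together generate $L$, then $\mathfrak{G}(L_{[p]})\in\pi^\loc_A(\A^1_k)$. The torsor is built in one step over an affine space by pulling back the Frobenius of $\GL_V$ along a product of Artin--Hasse exponential maps $W_{n_i}\to\GL_V$ attached to the chosen generators; the square-zero (sandwich element) condition on $U^+$ is exactly what forces the image of the classifying map to equal $\Gamma$ (Lemma \ref{lem:AC FLA}), and a Bertini-type lemma (Lemma \ref{lem:bertini}) cuts the base down to $\A^1_k$. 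One then verifies the criterion family by family --- $W$, $S$, $H$, $K$ --- and, separately, for the \emph{non-graded} filtered Special and Hamiltonian deformations, which your sketch omits entirely: these admit only a filtration, not a grading, and need individually tailored generators as in Lemmas \ref{lem:FLA type S}, \ref{lem:H(2r;n;alpha)} and \ref{lem:H(2r;n;l)}. So the Cartan case of your proposal is a program containing a step that would fail as stated, not a proof.
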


We will also see that Theorem \ref{thm:main} is `almost' true even in characteristic $p=5$~(cf.\ Theorem \ref{thm:AC Cartan}). Moreover, it turns out that `most' of our simple quotients $G$ of $\pi^{\Nori}(\A^1_k)$ admits a form $G_0$ over the prime field $\F_p$, \emph{i.e.}\ $G= G_0\otimes_{\F_p}k$, and that $G_0\in \pi_A^\loc(\A^1_{\F_p})$~(cf.\ Corollaries  \ref{cor:AC chevalley} and \ref{cor:AC Cartan p=2,3}).

Now we explain our strategy for the proof. 
In contrast to Raynaud's geometric approach to Abhyankar's conjecture, our proof of Theorem \ref{thm:main} heavily relies on the classification theorem for finite local simple group schemes. 
If a finite local $k$-group scheme $G$ is simple, then $G$ must be of height one, \emph{i.e.}\ the Frobenius map $F:G\lto G$ is trivial. Recall that the functor $G\longmapsto\Lie(G)$ induces an equivalence of categories
\begin{equation}\label{eq:scheme vs restricted Lie algebra}
\Biggl(
\begin{gathered}
~~\text{finite local $k$-group schemes}~~\\
\text{of height one}
\end{gathered}
\Biggl)~
\overset{~\simeq~}{\longleftrightarrow}
~\Biggl(
\begin{gathered}
~~\text{finite dimensional restricted}~~\\
\text{Lie algebras over $k$}
\end{gathered}
\Biggl)
\end{equation}
(cf.\ \cite[Chapter II, \S 7, n$^\textup{o}$4, 4.1]{dg70}).  
For example, if $G$ is a finite local abelian simple $k$-group scheme, then $G$
is isomorphic to $\alpha_p$ or $\mu_p$. In both the cases, the associated Lie
algebra $\Lie(G)$ is isomorphic to $k$ with the trivial Lie bracket. If $G=\alpha_p$, then the  restricted structure $[p]:k\lto k$ is given by the zero map, \emph{i.e.}\ $x^{[p]}=0$ for any $x\in \Lie(\alpha_p)=k$. If $G=\mu_p$, then the restricted structure $[p]$ is given by the $p$-th power map, \emph{i.e.}\ $x^{[p]}=x^p$ for $x\in k$. 
Via the above equivalence, 
 the classification of finite local non-abelian simple group schemes is equivalent to the
 classification of finite dimensional simple restricted Lie algebras~(cf.\
 \cite[\S2, Corollary 1]{viviani}), \emph{i.e.} there is an equivalence
 of categories
\begin{equation*}
\Biggl\{
\begin{gathered}
~~\text{finite local non-abelian simple}~~\\
\text{$k$-group schemes}
\end{gathered}
\Biggl\}~
\overset{\simeq}{\longleftrightarrow}
~\Biggl\{
\begin{gathered}
\text{finite dimensional simple}\\
~~\text{restricted Lie algebras over $k$}~~
\end{gathered}
\Biggl\}.
\end{equation*}
Moving toward a more precise understanding of the classification, let us recall
the following correspondence~(cf.\ \cite[\S4, Theorem 4]{viviani}) of
\textit{isomorphism classes}.\todo{isomorphism class or equivalence of
categories?}
\begin{equation*}
\Biggl\{
\begin{gathered}
\text{finite dimensional}\\
~~\text{simple restricted Lie algebras over $k$}~~
\end{gathered}
\Biggl\}~
\overset{1:1}{\longleftrightarrow}
~\Biggl\{
\begin{gathered}
~~\text{finite dimensional}~~\\
\text{simple Lie algebras over $k$}
\end{gathered}
\Biggl\},
\end{equation*}
where the map from the left hand side to the right hand side is given by taking the derived subalgebra $L\longmapsto [L,L]$, and the converse is given by taking the \textit{$p$-envelope} $L\longmapsto L_{[p]}$~(cf.~Definition \ref{def:p-env}). Therefore, the classification of finite local non-abelian simple $k$-group schemes is equivalent to the classification of  simple Lie algebras over $k$. The latter has a complete answer at least in characteristic $p>3$, which is known as the \textit{generalized Kostrikin--Shafarevich conjecture} and was established by Block--Wilson--Strade--Premet~(cf.\ \cite{Strade}\cite{viviani}. See also Theorem \ref{thm:KS conj}). For the history of the classification of simple Lie algebras in positive characteristic, see \cite[Introduction]{Strade}.

A basic family of  simple Lie algebras is obtained by taking the reduction modulo $p$ of simple complex Lie algebras. Such a simple Lie algebra is called of \textit{classical type}~(cf.~\S\ref{subsec:classical}). Under the assumption that $p>3$, the classification of simple Lie algebras of classical type is parallel with the classification of complex simple Lie algebras, which are completely parametrized by the Dynkin diagrams
\begin{equation*}
A_n(n\ge 1),~B_n(n\ge 2),~C_n(n\ge 3),~D_n(n\ge 4),~E_6,~E_7,~E_8,~F_4,~G_2.
\end{equation*}

A typical example of simple Lie algebra of non-classical type   is given by the \textit{Witt algebra}
\begin{equation*}
W(1;1)=\Der_k\bigl(k[X]/(X^p)\bigl), 
\end{equation*}
which is simple unless $p=2$. If $p=3$, it is isomorphic to the classical Lie algebra $\fsl_2$. However, if $p>3$, the Witt algebra $W(1;1)$ is of non-classical type. 
More generally, by considering truncated divided power polynomial rings
\begin{equation*}
A(m;\bfn)=\sum_{0\le\alpha\le\bfp^{\bfn}-\unit}k\cdot X^{(\alpha)},
\end{equation*}
where $X^{(\alpha)}=\prod_{i=1}^m``(X_i^{\alpha_i}/\alpha_i!)"$ for $\alpha=(\alpha_1,\dots,\alpha_m)\in\N^m$, one gets the \textit{$m$-th Witt algebra $W(m;\bfn)\subseteq\Der(A(m;\bfn))$ of weight $\bfn$}~(See \S\ref{subsec:witt} for the precise definition). The Witt algebras $W(m;\bfn)$ form a family of non-classical simple Lie algebras. 
By considering the differential forms
\begin{equation*}
\begin{aligned}
\omega_S&=dX_1\wedge\cdots\wedge dX_m,\\
\omega_H&=\sum_{i=1}^{r}dX_i\wedge dX_{i+r},\quad m=2r,\\
\omega_K&=dX_{2r+1}+\sum_{i=1}^r(X_idX_{i+r}-X_{i+r}dX_{i}),\quad m=2r+1,  
\end{aligned}
\end{equation*}
one gets the series of Lie subalgebras of the Witt algebras. Namely, the \textit{Special algebras} $S(m;\bfn)=\{D\in W(m;\bfn)\,|\,D(\omega_S)=0\}$, the \textit{Hamiltonian algebras} $H(2r;\bfn)=\{D\in W(2r;\bfn)\,|\,D(\omega_H)=0\}$ and the \textit{Contact algebras} $K(2r+1;\bfn)=\{D\in W(2r+1;\bfn)\,|\,D(\omega_K)\in A(2r+1;\bfn)\omega_K\}$. 
These Lie algebras $X(m;\bfn)~(X\in\{W,S,H,K\})$ are not simple in general, but the $i$-th derived subalgebra $X(m;\bfn)^{(i)}$ becomes simple for sufficiently large $i>0$. The resulting simple Lie algebras $X(m;\bfn)^{(i)}$ and the  \textit{`filtered deformations'} of them are called  \textit{Cartan type} simple Lie algebras~(cf.\ Definition \ref{def:cartan type}), which are the main objects in the present paper. In characteristic $p>3$, almost all non-classical simple Lie algebras are of Cartan type. In fact, only in characteristic $p=5$, there exists a simple Lie algebra which is neither classical nor of Cartan type. Such a Lie algebra is called of  \textit{Melikian type}~(cf.~\S\ref{subsec:p=5}). Then the classification theorem of simple Lie algebras in characteristic $p>3$ asserts that every simple Lie algebra is of  classical, Cartan or Melikian type.  
In characteristic $p=2,3$, there exist further classes of non-classical type Lie algebras~(cf.~\cite[\S 4.4]{Strade}), which we cannot deal with in the present paper.

It turns out that Theorem \ref{thm:main} for classical simple
Lie algebras is a consequence of the previous work due to the
first named author~\cite[Corollary 4.19]{ot18}~(cf.\
\S\ref{subsec:classical}). Thus, our crucial contribution
appears in the non-classical case. A difficulty in solving
Conjecture \ref{conj:PIAC A^1} for non-classical simple Lie
algebras comes from the fact that for such a Lie algebra, the
associated group scheme $\Gamma$ admits no smooth model, \emph{i.e.}\
it cannot be realized as the Frobenius kernel of a smooth
algebraic group. To handle this case, we start with a fixed faithful representation $\Gamma\hookrightarrow\GL_V$ where $\Gamma$ is a finite local non-abelian simple $k$-group scheme of height one. By choosing abelian unipotent subgroup schemes $H_1,\dots,H_n\subset \Gamma$ which generate $\Gamma$, one gets a morphism $\iota:\A_k^N\lto\GL_V^{(1)}$ for some integer $N>0$ so that the pullback $\iota^*F^{(1)}$ of the relative Frobenius homomorphism $F^{(1)}:\GL_V\lto\GL_V^{(1)}$ gives a $\GL_{V(1)}=\Ker(F^{(1)})$-torsor $P\lto\A^N_k$ whose classifying map $\pi^{\Nori}(\A^N_k)\lto\GL_{V(1)}$ has image $G$ which contains $\Gamma$, \emph{i.e.}\ $\Gamma\subseteq G$. However, it is not always true that $\Gamma=G$~(cf.\ Remark \ref{rem:AC Cartan}, Examples \ref{ex:Rumynin} and \ref{ex:counter-example in char p=3}). Therefore, more work is
needed to settle the problem. Namely, we need to control the image $G$ of the
classifying map. We do this by making a suitable choice of unipotent generators
$H_1,\dots,H_n$ of $\Gamma$. For this direction, we state a general criterion
(cf. Theorem \ref{thm:AC FLA}) for a finite local simple group scheme $\Gamma$
for being a quotient of $\pi^{\Nori}(\A_k^1)$ in terms of its Lie algebra
$\Lie(\Gamma)$ and then apply this criterion to all the mentioned Lie algebras.
In order to apply the criterion, we shall require certain \textit{filtrations}
on the Lie algebra~(cf.\ Corollary \ref{cor:AC GLA}), which fortunately
accompany many non-classical Lie algebras.
Under the assumption that $p>3$,   any non-classical simple Lie algebra $L$ has
a natural filtration indexed by the additive group $\Z$ of integers.
Actually, in many cases, one has $L\simeq\gr L$. Namely, $L$ itself admits a
$\Z$-graded structure. The remaining non-classical simple Lie algebras are
non-graded filtered \textit{Special} or \textit{Hamiltonian} algebras $L$. In
characteristic $p=5$, we can not handle the
non-graded Hamiltonian algebras $H(2r;\unit;\omega(\alpha))^{(1)}$ of
\textit{first type} with weight $\bfn=\unit$~(cf.\ Theorem \ref{thm:AC
Cartan}). That's why we have to assume that $p>5$ in Theorem
\ref{thm:main}.


Finally we explain the organization of the present paper. 
In \S\ref{sec:classical}, we mainly handle simple Lie algebras of classical type. In \S\ref{subsec:loc pi}, the definitions of Nori's fundamental group scheme and the local fundamental group scheme are recalled. The main reference is \cite{RTZ}. All our results will be stated in terms of the local fundamental group schemes. In \S\ref{subsec:lie alg}, the classification of finite local simple group schemes is recalled. We follow Viviani's paper \cite{viviani} for this aim. In \S\ref{subsec:classical}, we prove Theorem \ref{thm:main} for simple Lie algebras of classical type in characteristic $p>3$~(Corollary \ref{cor:AC classical}).

In \S\ref{sec:AC graded Lie alg}, motivated by Theorem \ref{thm:main} for simple Lie algebras of non-classical type, we give a
sufficient condition on a simple Lie algebra to have its associated group scheme
appear as a quotient of $\pi^{\Nori}(\A_k^1)$. In
\S\ref{subsec:witt vector}, we recall some facts on the additive
groups of Witt vectors and the Artin--Hasse exponential maps,
which play a fundamental role in our construction of torsors. In
\S\ref{subsec:FLA}, we recall the definitions of filtrations and
gradations on Lie algebras. We also put here some basic results
for later use. In \S\ref{subsec:criterion}, we establish
criteria Theorem \ref{thm:AC FLA} and Corollary \ref{cor:AC GLA}
for the corresponding group scheme of a given simple Lie algebra $L$ to appear as a quotient.

In \S\ref{sec:Cartan}, we recall the definition of Cartan or
Melikian type Lie algebras, and their descriptions, especially
natural filtrations or gradations of them. The main reference is
\cite{Strade}. In \S\ref{subsec:witt}, we recall the definition
of the Witt algebras. In \S\ref{subsec:gr cartan} and
\S\ref{subsec:cartan simple}, we recall the definition of Cartan
type simple Lie algebras. In \S\ref{subsec:p=5}, we recall the
definition of Melikian algebras. All the non-classical simple
Lie algebras in characteristic $p>3$ are listed in
\S\ref{sec:Cartan}. Here we are looking at these algebras with a
view towards applications of Theorem \ref{thm:AC FLA} and Corollary
\ref{cor:AC GLA}, so that the group schemes corresponding to
most of
these algebras will eventually appear as quotients of
$\pi^{\Nori}(\A_k^1)$.

In \S\ref{sec:application}, we prove Theorem \ref{thm:main}. As the classical case is handled in \S\ref{subsec:classical}, it suffices to prove the theorem for non-classical type simple Lie algebras. For them, 
we will apply the criterions Theorem \ref{thm:AC FLA} and Corollary \ref{cor:AC GLA}. In fact, the theorem is essentially proved in \S\ref{sec:Cartan}. Here we just apply classification theorems for non-classical simple Lie algebras. In \S\ref{subsec:KS conj}, we recall the classification theorems in characteristic $p>3$, especially the statement of the generalized Kostrikin--Shafarevich conjecture.  In \S\ref{subsec:AC non classical}, we complete the proof of Theorem \ref{thm:main}.


\begin{ack}
The main result in the present paper is a partial answer to the question given
by Dmitriy Rumynin after the first named author's talk at Max-Planck Institut
f\"ur Mathematik in Bonn,  September 2018. The authors would like to thank him
for suggesting considering Cartan type Lie algebras as a test for the
generalized Abhyankar's conjecture. The first named author would like to thank
Takuya Yamauchi for arranging the talk. The authors also thank them for giving
helpful comments on the first draft of the present paper. The authors would
like to thank Takao Yamazaki, Madhav Nori, Tomoyuki Abe and Jo\~ao Pedro dos
Santos for having fruitful discussions and suggestions the authors received.
Finally, the authors would like to thank the anonymous referee for his or her
valuable comments which are very useful in improving the manuscript.

The first named author was supported by JSPS Grant-in-Aid for JSPS Research Fellow, Grant Number 19J00366. The second author was supported by GNSAGA of INdAM. The third author is supported by  the Research Grants Council (RGC) of the Hong Kong SAR China (Project No.\ CUHK 14301019).
\end{ack}


\section*{\centering Notations and Conventions}
\label{s:notation}

\begin{enumerate}
\renewcommand{\labelenumi}{(\arabic{enumi})}
    \item  In the present paper, a Lie algebra over a field $k$ is always assumed to be \textit{finite dimensional}. 
Moreover, a \textit{simple} Lie algebra over $k$ is always assumed to be
\textit{non-abelian}. These conventions are also adopted for the restricted Lie algebras. 
   \item  By a \textit{simple Lie algebra} we mean a Lie algebra that is
       non-abelian and contains no nonzero proper ideals. 
   \item Let $k$ be a field. By a \textit{simple $k$-group scheme}, we mean a
       linear algebraic group $G$ over  $k$ that contains no nonzero proper
       closed normal subgroup schemes.
    \item Let $k$ be a field, and let $i\colon H\rightarrow G$ be a
       homomorphism of $k$-group schemes. We say that $i$ is
       \textit{injective} if $i$ a monomorphism of sheaves of groups, \emph{i.e.} $i(T)\colon H(T)\to G(T)$
       is injective for all $k$-schemes $T$. If $H$ and $G$ are affine then the
       map $i$ is injective if and only if it is a closed
       embedding~(cf.\ \cite[Chapter 15, \S3]{wa79}). 

   \item Given a simple restricted Lie algebra $L$ over a field $k$, we
       denote by $\mathfrak{G}(L)$ the corresponding finite local $k$-group
       scheme over $k$ under the correspondence \eqref{eq:scheme vs restricted
       Lie algebra}. By construction $L\simeq \Lie( \mathfrak{G}(L))$.
       
   \item Let $L$ be a Lie algerba (resp. restricted Lie algebra) over a field $k$, and let
        $V\subseteq L$ be a subset of $L$. We denoted by $\langle
        V\rangle_{\Lie}$ (resp. $\langle
        V\rangle_{p-\Lie}$) the \textit{Lie subalgebra {\rm (}resp.  restricted Lie
            subalgebra{\rm)}
        generated by $V$}. This is the intersection of all Lie
        subalgebras (resp. restricted Lie subalgebras) of $L$ containing $V$ and, in particular, it is the smallest Lie subalgebra
        (resp. restricted Lie
        subalgebra) of $L$
        containing $V$.
    \item Let $G$ be an affine group scheme. Let $V\coloneqq
        \{T_i\to G\}_{i\in I}$ be a collection of points of $G$, where each $T_i$
        is a $k$-scheme. We denote by $\langle V\rangle$ or $\langle
        T_i\to G\rangle_{i\in I}$ the \textit{subgroup of $G$ generated
        by $V$}. This is the intersection of all affine subgroups $H\subseteq G$ containing
        all the points in $V$,   \emph{i.e.} $T_i\to G$ belongs to
        $H(T_i)\subseteq G(T_i)$ for each $i\in I$. In particular, it is the
        smallest affine subgroup of $G$ with this property.
\end{enumerate}

\section{Reduction to the non-classical case}\label{sec:classical}

\subsection{The local fundamental group scheme}\label{subsec:loc pi}

In this subsection, we briefly recall the definition of Nori's \textit{fundamental group scheme}~(cf.\ \cite{no76}\cite[Chapter II]{no82}) and its \textit{local} variant~(cf.\ \cite[\S2]{RTZ}). 

Let $k$ be a field and $X$ a geometrically connected and reduced scheme of finite type over $k$ together with a $k$-rational point $x\in X(k)$. Then there exists a profinite $k$-group scheme $\pi^{\Nori}(X,x)$, called the \textit{fundamental group scheme} of $(X,x)$, such that for any finite $k$-group scheme $G$, the set $\Hom(\pi^{\Nori}(X,x),G)$ of $k$-homomorphisms $\pi^{\Nori}(X,x)\lto G$ is naturally in bijection with the set  of isomorphism classes of pointed $G$-torsors $(P,p)\lto (X,x)$. The construction $(X,x)\longmapsto \pi^{\Nori}(X,x)$ gives a covariant functor from the category of pointed geometrically connected and reduced schemes of finite type over $k$ into the category of profinite $k$-group schemes.  

Suppose that $k$ is a perfect field of characteristic $p>0$.
Recall that a finite $k$-group scheme $G$ is called
\textit{local} if it is connected as a scheme. Let $X$ be a reduced scheme over $k$. According to
\cite[\S2]{RTZ}, there exists a \textit{pro-finite local}~(cf.\
\cite[Definition 2.1]{RTZ}) $k$-group scheme $\pi^{\loc}(X)$, which we call the \textit{local fundamental group scheme} of $X$, such that it pro-represents the functor
\begin{equation*}
G\longmapsto H^1_{\fppf}(X,G)
\end{equation*}
from the category of finite local $k$-group schemes to the category of sets, where $H^1_{\fppf}(X,G)$ is nothing but the set of isomorphism classes of $G$-torsors over $X$ (cf. \cite[Proposition 2.13]{RTZ}). Namely, for any finite local $k$-group scheme $G$, there exists a canonical bijection of the sets
\begin{equation*}
\Hom_k(\pi^{\loc}(X),G)\xrightarrow{~\simeq~}H^1_{\fppf}(X,G).
\end{equation*} 
Note that a $G$-torsor $P\lto X$ corresponds to a surjective homomorphism $\pi^{\loc}(X)\twoheadrightarrow G$ if and only if there exists no strict  subgroup scheme $H\subsetneq G$ such that $P$ is reduced to an $H$-torsor $Q\lto X$, \emph{i.e.}\ $P\simeq Q\wedge^H G$.

\begin{definition}\label{def: local quotients}
Let $X$ be a reduced scheme over a perfect field $k$. We set 
\[
\pi^\loc_A(X)=\{\text{finite $k$-group schemes which appear as quotients of }\pi^\loc(X)\}
\]
\end{definition}

In the particular case when $X$ is a geometrically connected and reduced scheme of finite type over a perfect field $k$ of characteristic $p>0$ together with a $k$-rational point $x\in X(k)$, it turns out that $\pi^{\loc}(X)$ is canonically isomorphic to the maximal local quotient of the fundamental group scheme $\pi^{\Nori}(X,x)$. In particular, the maximal local quotient of $\pi^{\Nori}(X,x)$ does not depend on the choice of the rational point $x$. Moreover
\[
\pi^\loc_A(X)=\{\text{finite \emph{local} $k$-group schemes which appear as quotients of }\pi^\Nori(X,x)\}
\]

\subsection{Classification of finite local simple group schemes}\label{subsec:lie alg}

Let $k$ be a field. Let $G$ be an affine $k$-group scheme of
finite type. Recall that the Lie algebra $\Lie(G)$ of $G$ is
defined to be the $k$-vector space of left-invariant derivations
$D:k[G]\lto k[G]$ of the coordinate ring $k[G]$~(cf.\
\cite[\S12.1]{wa79}). If $k$ is of characteristic $p>0$, then
the associated Lie algebra $\Lie(G)$ has a restricted structure
$[p]:\Lie(G)\lto\Lie(G)$ induced by the $p$-th power map of
derivations $D\longmapsto D^p$~(cf.~\cite[\S12.1]{wa79}). This
correspondence $G\longmapsto (\Lie(G),[p])$ induces the
equivalence of categories between the category of finite local
$k$-group schemes of height one and the category of finite
dimensional restricted Lie algebras over $k$~(cf.\ \cite[Chapter II, \S 7, n$^\textup{o}$4, 4.1]{dg70}). In fact, for a finite local $k$-group scheme $G$
of height one with $\fg=\Lie(G)$, the dual associative algebra
$k[G]^{\vee}$ of $k[G]$ is canonically isomorphic to the
restricted enveloping algebra of $\fg$, \emph{i.e.}\
$U^{[p]}(\fg)\xrightarrow{~\simeq~}k[G]^{\vee}$. Thus the
associative algebra $k[G]^{\vee}$ is generated by the image
of a basis of $\fg$ and $\Dim_k k[G]=\Dim_k U^{[p]}(\fg)=p^{\Dim_k\fg}$~(cf.\ \cite[Chapter I, 7.10]{ja03}). 
Thus, it follows that a homomorphism $G\lto H$ of finite local $k$-group schemes of height one is surjective (respectively injective) if and only if the induced map between the Lie algebras $\Lie(G)\lto\Lie(H)$ is surjective (respectively injective). As the functor $G\longmapsto\Lie(G)$ is left exact from the category of affine group schemes of finite type over $k$ into the category of finite dimensional Lie algebras over $k$ ~(cf.\ \cite[Chapter II, \S 4, n$^\textup{o}$1, 1.5]{dg70}), this immediately implies that a sequence of homomorphisms of finite local $k$-group schemes of height one
\begin{equation*}
1\lto G'\lto G\lto G''\lto 1
\end{equation*}
is exact if and only if the induced sequence of homomorphisms of restricted Lie algebras
\begin{equation*}
0\lto\Lie(G')\lto\Lie(G)\lto\Lie(G'')\lto 0
\end{equation*}
is exact. In particular, a finite local non-abelian $k$-group scheme $G$ of
height one is simple if and only if the associated restricted Lie algebra
$\Lie(G)$ is simple~(cf.\ \cite[\S2, Corollary 1]{viviani}).

Let $L$ be a Lie algebra over $k$. 
Let $\Der(L)$ denote the subspace of $\fgl_L$ consisting of
\textit{derivations} of $L$~(cf.\ \cite[Chapter 10, a]{mi17}). Then $\Der(L)$ is closed under the Lie bracket of $\fgl_L$ and the $p$-th power map of $L$. Namely $\Der(L)$ is a restricted Lie subalgebra of $\fgl_L$. 
Let the following map
\begin{equation}\label{eq:adjoint}
\ad:L\lto\Der(L)\,;\,x\longmapsto\ad(x)
\end{equation}
denote the  \textit{adjoint representation} of $L$~(cf.\ \cite[Chapter 10, a]{mi17}). 
If $L$ has a restricted structure $[p]:L\lto L$, then the adjoint representation is compatible with the restricted structures, \emph{i.e.}\ $\ad(x^{[p]})=\ad(x)^p$ for any $x\in L$.

\begin{definition}(cf.\ \cite[Definition 1.1.2]{Strade})\label{def:p-env}
Let $G$ be a restricted Lie algebra over $k$ and $i\colon L\lto G$ be an injective
homomorphism of Lie algebras. We denote by
$L_{[p],i}$ the 
smallest restricted Lie subalgebra of $G$ containing $i(L)$ and call it the
\textit{$p$-envelope} of $L$ in $G$.

If $L$ is a centerless Lie algebra and $i=\ad \colon L\lto G=\Der(L)$ is the adjoint
representation \eqref{eq:adjoint} (which is injective because $L$ is centerless) we set $L_{[p]} = L_{[p],\ad}$.
\end{definition}

\begin{thm}{\rm (cf.\ \cite[\S4, Theorem 4]{viviani})}\label{thm:viviani} 
There exists a bijection between the set of isomorphism classes of simple Lie algebras over $k$ and the set of isomorphism classes simple restricted Lie algebras over $k$. 
\begin{equation*}
\Bigl\{
~\text{simple Lie algebras over $k$}~
\Bigl\}~
\overset{1:1}{\longleftrightarrow}
~\Bigl\{
~\text{simple restricted Lie algebras over $k$}~
\Bigl\}.
\end{equation*}
More precisely, for any simple Lie algebra $L$ over $k$, let $L_{[p]}$ be the $p$-envelope of $L$ in the adjoint representation $L\hookrightarrow\Der(L)$. Then $L_{[p]}$ is a simple restricted Lie algebra over $k$.  Conversely, for any simple restricted Lie algebra $L$ over $k$, the derived subalgebra $[L,L]$ becomes a simple Lie algebra over $k$. 
\end{thm}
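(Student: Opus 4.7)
The plan is to verify well-definedness of the two assignments $L\mapsto L_{[p]}$ and $L\mapsto [L,L]$ on the indicated classes of algebras, and then check they are mutually inverse. For the forward map, suppose $L$ is a simple Lie algebra. Since $L$ is non-abelian, its center $Z(L)$ is a proper Lie ideal, hence $Z(L)=0$, so $\ad\colon L\hookrightarrow\Der(L)$ is injective and $L_{[p]}\subseteq\Der(L)$ is well defined. I would show that any restricted ideal $I\subseteq L_{[p]}$ is $0$ or $L_{[p]}$ by analyzing $I\cap\ad(L)$: this intersection is a Lie ideal of $\ad(L)\simeq L$, hence equals $0$ or $\ad(L)$. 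In the latter case, minimality of the $p$-envelope forces $I=L_{[p]}$. In the former, the identity
\[
[D,\ad(x)]=\ad(D(x)),\qquad D\in\Der(L),\ x\in L,
\]
yields $\ad(D(x))\in I\cap\ad(L)=0$, hence $D(x)\in Z(L)=0$ for all $D\in I$, $x\in L$, so $I=0$. Thus $L_{[p]}$ is a simple restricted Lie algebra.

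\textbf{Backward direction.} Let $L$ be simple restricted and $M=[L,L]$. Both $Z(L)$ and the centralizer $C_L(M)$ are restricted ideals of $L$ (Lie ideals by Jacobi, $[p]$-stable because $\ad(x^{[p]})=\ad(x)^p$). Non-abelianness of $L$ forces $Z(L)=0$, and were $C_L(M)=L$, we would have $M\subseteq Z(L)=0$ and hence $L$ abelian, so $C_L(M)=0$. In particular $M\neq 0$ and $M$ is non-abelian, since otherwise $M\subseteq C_L(M)=0$. The formula $[y,x^{[p]}]=-\ad(x)^p(y)$, together with $M$ being a Lie ideal of $L$, shows that the $p$-envelope $M_{[p],L}$ computed inside $L$ is a nonzero restricted ideal, so equals $L$. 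For a nonzero Lie ideal $J\subseteq M$, iterating this formula yields $[L,J]\subseteq J$, so $J$ is actually a Lie ideal of $L$; then $J_{[p],L}$ is a nonzero restricted ideal of $L$, hence equals $L$, and expressing $[L,L]$ via iterated brackets and $[p]$-powers of elements of $J$ gives $M\subseteq J$. So $J=M$, proving $M$ simple.

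\textbf{Mutual inversion.} For simple $L$, $\ad(L)=[\ad(L),\ad(L)]\subseteq[L_{[p]},L_{[p]}]$ is immediate from $L=[L,L]$; for the reverse inclusion, the Jacobson-type identity $[\ad(x)^{p^m},D]=[\ad(x),[\ad(x),\ldots,[\ad(x),D]\ldots]]$ with $p^m$ brackets, combined with $[\ad(x),D]=-\ad(D(x))\in\ad(L)$ and the $[\ad(x),-]$-stability of $\ad(L)$, shows $[L_{[p]},L_{[p]}]\subseteq\ad(L)$. For simple restricted $L$ with $M=[L,L]$, the restricted action $\phi\colon L\to\Der(M)$ is injective since $\ker\phi=C_L(M)=0$, and its image is the restricted subalgebra of $\Der(M)$ generated by $\phi(M)=\ad(M)$, namely $M_{[p]}$, because $L=M_{[p],L}$ by the backward step. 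Thus $\phi$ realizes an isomorphism $L\simeq M_{[p]}$.

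\textbf{Main obstacle.} The delicate technical core is the backward direction: promoting a Lie ideal of $M$ to a Lie ideal of $L$, and extracting $J=M$ from $J_{[p],L}=L$, both require careful inductive bookkeeping of how elements of $L$ are built out of $M$ by iterated $[p]$-powers, resting throughout on the semilinear identity $\ad(x^{[p]})=\ad(x)^p$ that converts between Lie brackets with $[p]$-powers and iterated Lie brackets.
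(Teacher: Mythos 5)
Your proof is correct; the paper itself offers no proof of this statement (it is quoted from Viviani), and your argument is essentially the standard $p$-envelope argument underlying that reference: a restricted ideal of $L_{[p]}$ meets $\ad(L)$ in a Lie ideal, the restricted subalgebra generated by a Lie ideal is a restricted ideal via $[y,x^{[p]}]=-\ad(x)^p(y)$, and the identity $[E,\ad(y)]=\ad(E(y))$ drives the mutual inversion. The one point worth making explicit is that ``simple restricted Lie algebra'' must here be read as ``non-abelian with no nonzero proper \emph{restricted} ideals'' --- exactly the convention your quantification over restricted ideals $I$ implicitly adopts --- since $\ad(L)$ is a proper Lie ideal of $L_{[p]}$ whenever $L$ is not restrictable, so $L_{[p]}$ is generally not simple as an abstract Lie algebra.
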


Thus, the classification of finite simple local $k$-group scheme of height one is equivalent to the classification of  simple Lie algebras over $k$. Namely, there exists a bijection between the set of isomorphism classes of simple Lie algebras over $k$ and the set of isomorphism classes of finite local non-abelian simple $k$-group schemes. 
\begin{equation*}
\Bigl\{
~\text{simple Lie algebras over $k$}~
\Bigl\}~
\overset{1:1}{\longleftrightarrow}
~\Bigl\{
~\text{finite local non-abelian simple $k$-group schemes}~
\Bigl\}.
\end{equation*}

\begin{rem}
Let $k$ be a perfect field of characteristic $p>0$ with $\overline{k}$ an algebraic closure of $k$. Let $L$ be a Lie algebra over $k$. If $L\otimes_k\overline{k}$ is simple, then so is $L$. 
\end{rem}

\subsection{The conjecture for classical simple Lie algebras}\label{subsec:classical}

The first class of simple Lie algebras which we handle is simple Lie algebras of \textit{classical type}~(cf.~\cite[\S4.1]{Strade}). Recall that an algebraic group $G$ over a field $k$ is said to be \textit{almost simple} if it is a connected semisimple algebraic group such that the quotient $G/Z$ of $G$ by the center $Z$ is simple~(cf.\ \cite[5.6.6]{Poonen}). Note that the center $Z$ of a connected semisimple algebraic group is always a finite group scheme of multiplicative type~(cf.\ \cite[Corollary 5.6.11]{Poonen}). A homomorphism $G\lto G'$ of connected algebraic groups over a field $k$ is said to be a \textit{central isogeny} if it is a surjective homomorphism whose kernel is finite and is contained in the center of $G$. If $G$ is a connected semisimple algebraic group over $k$, then there exists a semisimple simply connected algebraic group $G^{\rm sc}$ which is unique up to isomorphism such that $G^{\rm sc}/H\simeq G$ for some subgroup scheme $H\subseteq Z(G^{\rm sc})$ of the center of $G^{\rm sc}$~(cf.\ \cite[Proposition 5.6.21(a)]{Poonen}). We call the central isogeny $G^{\rm sc}\lto G$ a \textit{simply connected cover} of $G$. Any two connected semisimple algebraic groups $G$ and $G'$ are said to be \textit{centrally isogeneous} if $G^{\rm sc}\simeq G'^{\rm sc}$ as a $k$-group scheme. Then, according to \cite[Theorem 5.6.26]{Poonen}, the centrally isogeneous classes of almost simple algebraic groups over a separably closed field $k$ are completely parametrized by the Dynkin diagrams of types
\begin{equation}\label{eq:dynkin}
A_n(n\ge 1),~B_n(n\ge 2),~C_n(n\ge 3),~D_n(n\ge 4),~E_6,~E_7,~E_8,~F_4,~G_2.
\end{equation}
In the case where $k=\C$, the classification of simple Lie algebras is equivalent to the classification of centrally isogeneous classes of almost simple algebraic groups over $\C$. Namely, the list of the Dynkin diagrams (\ref{eq:dynkin}) gives a complete classification of simple complex Lie algebras. More precisely, for any simple complex algebra $L$, there exists an almost simple algebraic group $G$ over $\C$ such that $L\simeq\Lie(G)$. Moreover, for any almost simple algebraic groups $G$ and $G'$ over $\C$, the associated simple Lie algebras are isomorphic $\Lie(G)\simeq\Lie(G')$ to each other if and only if $G$ and $G'$ have the same Dynkin type $\mathcal{D}$, or equivalently they are centrally isogeneous. Here, note that 
in the characteristic $0$ case, any central isogeny $G\lto G'$ of connected algebraic groups is always \'etale, hence it induces an isomorphism $\Lie(G)\xrightarrow{~\simeq~}\Lie(G')$ between the associated Lie algebras.

Let $L_{\C}$ be a semisimple Lie algebra over $\C$ with $H_{\C}\subset L_{\C}$ a Cartan subalgebra and $\Phi$ the root system. Choose a positive system $\Phi^+\subset\Phi$ and denote by $\Delta=\{\alpha_1,\dots,\alpha_l\}$ the corresponding set of simple roots~(cf.\ \cite[II Chapter 1, 1.5]{ja03}). 
Then $L_{\C}$ admits a \textit{Chevalley basis} $\{x_{\alpha}\}_{\alpha\in\Phi}\cup\{h_i\}_{i=1}^{l}$~(cf.~\cite[Theorem 4.1.1]{Strade}), and the $\Z$-span $L_{\Z}$ of the Chevalley basis gives a $\Z$-Lie subalgebra of $L_{\C}$. For any field $k$, we get a Lie algebra
\begin{equation*}
L_k\Def L_{\Z}\otimes_{\Z}k, 
\end{equation*}
which is determined uniquely up to isomorphism by the complex  
simple Lie algebra $L_{\C}$. The Lie algebra $L_k$ is called the \textit{Chevalley algebra} over $k$ associated with the semisimple complex Lie algebra $L_{\C}$. The Chevalley algebra $L_k$ can be always interpreted as the Lie algebra of a certain algebraic group over $k$ in the following way. Let us take a connected semisimple algebraic group $G_{\C}$ over $\C$ such that $\Lie(G_{\C})\simeq L_{\C}$. Then there exists an affine algebraic $\Z$-group scheme $G$ satisfying the following condition~(cf.\ \cite{ab69}\cite{vp96}).
\begin{enumerate}
\renewcommand{\labelenumi}{(\roman{enumi})}
\item $G\otimes_{\Z}\C$ is isomorphic to $G_{\C}$ as an algebraic group over $\C$. 
\item For any algebraically closed field $k$, $G_k\Def G\otimes_{\Z}k$ is a connected semisimple algebraic group  over $k$ which is split over the prime field  and has the same Dynkin type as $G_{\C}$. Moreover, $G_k$ is simply connected (respectively adjoint) if and only if $G_{\C}$ is simply connected (respectively adjoint). 
\end{enumerate}
The $\Z$-group scheme $G$ is determined uniquely up to isomorphism by the semisimple algebraic group $G_{\C}$, and is called the \textit{Chevalley--Demazure group scheme} associated with $G_{\C}$. For any field $k$, we simply call $G_k$ the \textit{Chevalley group} over $k$ associated with the complex semisimple algebraic group $G_{\C}$. In the case where $k$ is of characteristic $p\neq 2,3$, the Lie algebra of the Chevalley group $G_k$ is isomorphic to the Chevalley algebra $L_k$. 

\begin{lem}{\rm (cf.\ \cite[\S2.3 Lemme 4]{mathieu})}\label{lem:mathieu}
With the above notation, we have $L_{\Z}\otimes_{\Z}\Z[\frac{1}{6}]=\Lie(G)\otimes_{\Z}\Z[\frac{1}{6}]$ in $L_{\C}=\Lie(G)\otimes_{\Z}\C$.
\end{lem}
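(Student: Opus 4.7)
The plan is to compare the two $\Z$-lattices $L_\Z$ and $\Lie(G)$ sitting inside $L_\C$, and to show that a natural inclusion between them is an isomorphism after inverting $6$. First I would record that both $L_\Z$ and $\Lie(G)$ are free $\Z$-modules of rank $\Dim_\C L_\C$: the Chevalley basis $\{x_\alpha\}_{\alpha\in\Phi}\cup\{h_i\}_{i=1}^l$ freely generates $L_\Z$, and $\Lie(G)$ is free of the same rank because $G$ is a smooth affine $\Z$-group scheme with complex fiber $G_\C$ satisfying $\Lie(G_\C)\simeq L_\C$.

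Next I would produce the inclusion $L_\Z\hookrightarrow\Lie(G)$. By construction of the Chevalley--Demazure group scheme, $G$ contains root subgroups $U_\alpha\simeq\G_{a,\Z}$ together with simple coroot cocharacters $\alpha_i^\vee\colon\G_{m,\Z}\to T\subseteq G$, all defined integrally; differentiating at the identity identifies $x_\alpha$ with a generator of the tangent space to $U_\alpha$ and $h_i$ with $d\alpha_i^\vee(1)$, so every Chevalley basis element belongs to $\Lie(G)$.

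The main step is to show that the cokernel $\Lie(G)/L_\Z$ is annihilated by a power of $6$. Using the adjoint action of the split maximal torus $T$, both lattices decompose as a direct sum of a Cartan piece and one-dimensional root spaces. On each root space the comparison of the two rank-one $\Z$-submodules of $\C\cdot x_\alpha$ is controlled by the Chevalley structure constants $N_{\alpha,\beta}$, which satisfy $|N_{\alpha,\beta}|\le 3$ (with $3$ attained only in type $G_2$), so the relevant denominators divide $6$. On the Cartan piece $\Lie(G)\cap\fh$ is identified with the cocharacter lattice $X_*(T)$, while $L_\Z\cap\fh$ is the coroot lattice $Q^\vee$; this discrepancy is also $\{2,3\}$-local for the isogeny class fixed by $G_\C$.

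The hard part is reconciling both contributions uniformly across the Dynkin types listed in \eqref{eq:dynkin}: one must verify that, for the specific isogeny class determined by $G_\C$, no prime outside $\{2,3\}$ survives in either the root-space contribution or the Cartan contribution. This is precisely the case-by-case inspection carried out in Mathieu's original proof, which I would follow.
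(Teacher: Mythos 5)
The paper does not actually prove this lemma: it is quoted from Mathieu's article with no argument supplied, so there is no proof of the authors' to measure yours against, and your sketch has to stand on its own. Its overall framework --- compare the two lattices inside $L_{\C}$ via the adjoint action of the split maximal torus --- is the right one, but the execution has problems. The root-space step is off target: by construction of the Chevalley--Demazure group scheme the big cell $U^-\times T\times U^+$ is an open subscheme of $G$ containing the identity section, and each root subgroup $U_\alpha\simeq\G_{a,\Z}$ has $\Lie(U_\alpha)=\Z\cdot x_\alpha$, so the weight-$\alpha$ component of $\Lie(G)$ equals $\Z\cdot x_\alpha$ on the nose. No denominators arise there at all, and the bound $|N_{\alpha,\beta}|\le 3$ is irrelevant to the lattice comparison --- it is what guarantees that $L_{\Z}$ is closed under the bracket over $\Z$, a prerequisite rather than the point at issue.

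The genuine gap is in the Cartan part. There $\Lie(G)\cap\fh_{\C}$ is the cocharacter lattice $X_*(T)$ while $L_{\Z}\cap\fh_{\C}$ is the coroot lattice $Q^\vee=\bigoplus_i\Z h_i$, and the index $[X_*(T):Q^\vee]$ divides the determinant of the Cartan matrix, which in type $A_n$ equals $n+1$ and is \emph{not} a $\{2,3\}$-number in general. Concretely, for $G=\mathrm{PGL}_5$ one has $\Lie(G)=\fgl_5(\Z)/\Z\cdot I$ and $\overline{\mathrm{diag}(1,0,0,0,0)}=\tfrac{1}{5}(4h_1+3h_2+2h_3+h_4)$, so $[\Lie(G):L_{\Z}]=5$ and the asserted equality fails over $\Z[\tfrac{1}{6}]$. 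Your claim that the Cartan discrepancy is ``$\{2,3\}$-local for the isogeny class fixed by $G_{\C}$'' is therefore false, and deferring the remaining verification to Mathieu's case-check cannot rescue it, because the uniform statement being deferred is not true as you have formulated it (indeed the lemma as printed, for an arbitrary connected semisimple $G_{\C}$ with $\Lie(G_{\C})\simeq L_{\C}$, has the same defect). The fix is to restrict the isogeny type: for $G$ simply connected one has $X_*(T)=Q^\vee$, both pieces of the weight decomposition match integrally, and the conclusion holds even before inverting $6$; this is the only case the paper uses (Corollary \ref{cor:AC chevalley} chooses $G_{\C}$ simply connected). Either add that hypothesis or treat type $A$ separately.
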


In particular, if $k$ is of characteristic $p>3$, then the Chevalley algebra $L_k$ admits a restricted structure, which is induced by the one of $\Lie(G_k)$. In terms of the fixed Chevalley basis, it can be uniquely characterized by the rule
\begin{equation*}
(x_{\alpha}\otimes 1)^{[p]}=0~~\text{and}~~(h_i\otimes 1)^{[p]}=h_i\otimes 1.
\end{equation*} 
Moreover, the finite local $k$-group scheme $\mathfrak{G}(L_k)$~\textup{(cf.\ \nameref{s:notation})} associated with the Chevalley algebra $L_k$ is nothing but the first Frobenius kernel $G_{k(1)}\Def\Ker(F^{(1)}:G_k\lto G_k^{(1)})$ of the Chevalley group $G_{k}$.

The Chevalley algebra $L_k$ is not simple even if $L_{\C}$ is simple. In fact, it turns out that if $k$ is of characteristic $p>3$, it is simple unless $L_{\C}$ is of type $A_l$ with $l\equiv -1$ mod $p$, in which $L$ has a one-dimensional center $Z=k\cdot (h_1+2h_2+\cdots+lh_l)$ so that $L/Z\simeq\mathfrak{psl}_{l+1,k}$ is a simple Lie algebra~(cf.~\cite[Lemme 2]{mathieu}\cite[\S4.1]{Strade}). 
Note that 
\begin{equation*}
(h_1+2h_2+\cdots+lh_l)^{[p]}=h_1^{[p]}+2^ph_2^{[p]}+\cdots+l^ph_l^{[p]}=h_1+2h_2+\cdots+lh_l, 
\end{equation*}
and hence $Z$ is a restricted Lie subalgebra of $L$. Thus, the quotient $L/Z$ also has a restricted structure. 

\begin{definition}(cf.\ \cite[D\'efinition 3]{mathieu}\cite[\S4.2]{Strade})\label{def:classical type}
Let $k$ be an algebraically closed field of characteristic $p>3$. A simple Lie algebra over $k$ is said to be of \textit{classical type} if it is isomorphic to either the Lie algebra 
\begin{equation*}
\mathfrak{psl}_{n+1,k}~(n\equiv -1~\text{mod}~p)
\end{equation*}
or the Chevalley algebra $L_k$ associated with a simple complex Lie algebra $L_{\C}$ of type 
\begin{equation*}
A_n~(n\not\equiv -1~\text{mod}~p)
,~B_n,~C_n,~D_n,~E_6,~E_7,~E_8,~F_4,~G_2.
\end{equation*}
\end{definition}

Now let us prove Theorem \ref{thm:main} for the classical simple Lie algebras. We deduce it from the following result.

\begin{thm}{\rm (cf.~\cite[Corollary 4.19]{ot18})}\label{thm:AC Sigma}
Let $\Sigma$ be a split semisimple simply connected algebraic group over a perfect field $k$ of characteristic $p>0$. Then 
\begin{equation*}
\Sigma_{(r)}\Def\Ker\bigl(\Sigma\xrightarrow{F^{(r)}}\Sigma^{(r)}\bigl) \in \pi^\loc_A(\A^1_k)
\end{equation*}
for any integer $r>0$. 
\end{thm}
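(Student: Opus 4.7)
The plan is to construct a $\Sigma_{(r)}$-torsor $P\to \A^1_k$ whose classifying morphism $\pi^\loc(\A^1_k)\to \Sigma_{(r)}$ is surjective, i.e.\ does not factor through any proper closed subgroup scheme. Since the relative Frobenius $F^{(r)}\colon \Sigma\to \Sigma^{(r)}$ is itself an fppf $\Sigma_{(r)}$-torsor, pulling back along a morphism $\varphi\colon X\to \Sigma^{(r)}$ produces a $\Sigma_{(r)}$-torsor on $X$ whose class is the image of $\varphi$ in $H^1_\fppf(X,\Sigma_{(r)})$. The task is to choose $\varphi$ with $X=\A^1_k$ so that the classifying image exhausts $\Sigma_{(r)}$.

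The key structural input is a generation statement: $\Sigma_{(r)}$ is generated, as a closed subgroup scheme of $\Sigma$, by the root subgroup Frobenius kernels $U_{\alpha,(r)}\cong \alpha_{p^r}$, $\alpha\in \Phi$. This follows from the big-cell decomposition $U^-\times T\times U^+\hookrightarrow \Sigma$ combined with the simply connected hypothesis, which ensures (via the Chevalley relations for the rank-one $\SL_2$-subgroups) that $T_{(r)}$ is already generated by $U_{\alpha,(r)}$ and $U_{-\alpha,(r)}$ for the simple roots. It therefore suffices to build a torsor whose classifying map has image containing each $U_{\alpha,(r)}$.

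Enumerate the roots $\alpha_1,\ldots,\alpha_N$ and define, using the $\F_p$-form of $\Sigma$ to identify $\Sigma^{(r)}=\Sigma$, the morphism $\mu\colon \A^N_k\to \Sigma$ by $\mu(t_1,\ldots,t_N)=u_{\alpha_1}(t_1)\cdots u_{\alpha_N}(t_N)$. The pullback of the Frobenius torsor along $\mu$ is a $\Sigma_{(r)}$-torsor $P\to \A^N_k$. Restricting $P$ to the $i$-th coordinate axis reduces, along the embedding $U_{\alpha_i,(r)}\hookrightarrow \Sigma_{(r)}$, to the universal $\alpha_{p^r}$-torsor $\{y^{p^r}=t\}\to \A^1_k$, whose classifying map is surjective onto $\alpha_{p^r}$. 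Combining this over all $i$ with the generation statement shows that the classifying map $\pi^\loc(\A^N_k)\to \Sigma_{(r)}$ is surjective.

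To descend from $\A^N_k$ to $\A^1_k$, one pulls $P$ back along a curve $f\colon \A^1_k\to \A^N_k$; for instance $f(t)=(t^{e_1},\ldots,t^{e_N})$ with distinct positive integers $e_i$ each coprime to $p$. This is the main technical obstacle: on $\A^1_k$ the axis-restriction trick no longer applies directly, and one must argue that the distinct monomials $t^{e_i}$ contribute independently so that the classifying map of $f^*P$ still realizes every $U_{\alpha_i,(r)}$. The coprimality ensures that each pulled-back torsor $\{y^{p^r}=t^{e_i}\}$ remains non-trivial, and one recombines these contributions via the generation result. Alternatively, one may bypass the descent entirely by constructing a single-variable $\varphi\colon \A^1_k\to \Sigma$ directly with the Artin--Hasse and Witt-vector machinery from \S\ref{subsec:witt vector}, absorbing all root directions into one curve.
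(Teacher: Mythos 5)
Your first two steps are sound and coincide with the argument the paper relies on (it cites \cite[Corollary 4.19]{ot18}, whose proof is exactly the Nori--Laumon scheme you describe): pull back the $\Sigma_{(r)}$-torsor $F^{(r)}\colon\Sigma\to\Sigma^{(r)}$ along the product-of-root-groups map $\mu\colon\A^N_k\to\Sigma^{(r)}$, observe that restriction to the $i$-th coordinate axis yields the $U_{\alpha_i,(r)}$-torsor $\G_a\xrightarrow{F^{(r)}}\G_a$ whose classifying map is surjective, and conclude from the generation of $\Sigma_{(r)}$ by the $U_{\alpha,(r)}$ (which uses simple-connectedness to recover $T_{(r)}$ from the rank-one subgroups) that $\pi^{\loc}(\A^N_k)\to\Sigma_{(r)}$ is surjective.

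The descent from $\A^N_k$ to $\A^1_k$ is where your proposal has a genuine gap, and you have correctly identified it as the main obstacle but not resolved it. The monomial curve $f(t)=(t^{e_1},\dots,t^{e_N})$ with $p\nmid e_i$ does not come with an argument: the image of $\pi^{\loc}(\A^1_k)\to\Sigma_{(r)}$ classifying $f^*P$ is the smallest subgroup scheme $H$ through which the torsor reduces, and once the coordinates are entangled in the single map $t\mapsto u_{\alpha_1}(t^{e_1})\cdots u_{\alpha_N}(t^{e_N})$ there is no axis restriction or projection available to show that $H$ contains each $U_{\alpha_i,(r)}$. Nontriviality of the individual abelian torsors $\{y^{p^r}=t^{e_i}\}$ is a statement about quotients of $\alpha_{p^r}^N$, not about subgroups of the nonabelian $\Sigma_{(r)}$, so the ``recombination via the generation result'' does not go through; this is precisely the kind of failure the paper warns about in Remark \ref{rem:AC Cartan}. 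The paper's route (via \cite{ot18}) is instead a Bertini-type theorem for finite local torsors over affine space --- Lemma \ref{lem:bertini}, proved in \cite[Theorem 4.17]{ot18} --- which produces a linear $\A^{N-1}_k\hookrightarrow\A^N_k$ preserving surjectivity of the classifying map and is applied iteratively down to $\A^1_k$. That Bertini input is the essential technical content of the theorem and cannot be replaced by the coprimality observation; your alternative suggestion of a direct one-variable Artin--Hasse construction also does not avoid it, since the paper's own Theorem \ref{thm:AC FLA} still invokes Lemma \ref{lem:bertini} at the final step.
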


This is an analogue of Nori's Theorem \cite{no94}~(cf.\ Example \ref{ex:quasi-p}).  
The idea of the proof is the same as the one due to Laumon \cite[\S 3.2]{se92}\cite[Theorem 3.5]{ha18}. It suffices to establish a Bertini type result for finite local torsors of affine spaces $\A^n_k$. Indeed, one can prove the following result. For the proof, see \cite[Proof of Theorem 4.17]{ot18}. This will be used again in the proof of Theorem \ref{thm:AC FLA}.

\begin{lem}{\rm (cf.\ \cite[Theorem 4.17]{ot18})}\label{lem:bertini}
Let $k$ be a perfect field of characteristic $p>0$. 
Let $n\ge 2$ be an integer. Let $\Gamma$ be a finite local $k$-group scheme of height one and $\pi^{\loc}(\A^n_k)\twoheadrightarrow \Gamma$ a surjective $k$-homomorphism. Then there exists a closed immersion $\A^{n-1}_k\hookrightarrow\A^n_k$ such that the composition map
\begin{equation*}
\pi^{\loc}(\A^{n-1}_k)\lto\pi^{\loc}(\A^n_k)\lto \Gamma
\end{equation*}
is surjective. 
\end{lem}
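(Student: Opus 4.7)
The plan is to translate the surjectivity of $\pi^{\loc}(\A^n_k)\twoheadrightarrow\Gamma$ into the indecomposability of the corresponding $\Gamma$-torsor $P\to\A^n_k$ — meaning $P$ does not reduce to any proper closed subgroup scheme $H\subsetneq\Gamma$ — and then to show that for some coordinate hyperplane translate $\iota_a:\A^{n-1}_k\hookrightarrow\A^n_k$, the pullback $\iota_a^*P$ remains indecomposable. The crux is to combine a spreading-out argument with the finite-type geometry of the scheme of restricted Lie subalgebras of $\fg=\Lie(\Gamma)$.

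Parameterize the family of translates $\iota_a:(x_1,\ldots,x_{n-1})\mapsto(x_1,\ldots,x_{n-1},a)$ by $a\in\A^1_k$, and for each $a\in\A^1_k(k)$ let $H_a\subseteq\Gamma$ denote the smallest closed subgroup scheme to which $\iota_a^*P$ reduces, equivalently, the image of the classifying homomorphism $\pi^{\loc}(\A^{n-1}_k)\to\Gamma$ attached to $\iota_a^*P$. The goal is to find some $a\in\A^1_k(k)$ with $H_a=\Gamma$. For each fixed proper closed subgroup scheme $H\subsetneq\Gamma$ the locus $Z_H=\{a\in\A^1_k:H_a\subseteq H\}$ is constructible (using that the moduli scheme of reductions of $P$ to $H$ is of finite type over $\A^n_k$), and I would argue it is a \emph{proper} subset of $\A^1_k$: if $Z_H=\A^1_k$, then fiberwise reductions over $\A^{n-1}_k\times\A^1_k\cong\A^n_k$ would, via a spreading-out argument applied to the finite morphism $P/H\to\A^n_k$, promote to a global reduction of $P$ to $H$, contradicting indecomposability.

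The main obstacle, and the crux of the argument, is that $\Gamma$ typically admits \emph{infinitely many} proper closed subgroup schemes — for instance, one-parameter families of Borel-type subgroup schemes inside Frobenius kernels of semisimple groups — so the naive union $\bigcup_{H\subsetneq\Gamma}Z_H\subseteq\A^1_k$ need not be a proper subset. To bypass this I would invoke the equivalence \eqref{eq:scheme vs restricted Lie algebra}: closed subgroup schemes of the height-one group $\Gamma$ correspond bijectively to restricted Lie subalgebras of $\fg$, and these form a scheme $\mathcal{S}$ of finite type over $k$, cut out inside the Grassmannian of $\fg$ by the closed conditions of stability under the bracket and the $p$-power map; the proper subalgebras sit inside a proper closed subscheme $\mathcal{S}'\subsetneq\mathcal{S}$. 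The assignment $a\mapsto\Lie(H_a)$ defines a morphism $\A^1_k\to\mathcal{S}$, and if no $a$ satisfied $H_a=\Gamma$ this morphism would factor through $\mathcal{S}'$. Passing to the generic point of the closure of its image, one obtains a single subgroup scheme $H_\eta\subsetneq\Gamma$ such that $Z_{H_\eta}$ is dense in $\A^1_k$, contradicting the previous paragraph and forcing the existence of an $a$ with $H_a=\Gamma$.
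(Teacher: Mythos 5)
The paper does not actually reprove this lemma: it defers entirely to the proof of \cite[Theorem 4.17]{ot18}. So the comparison here is between your proposal and that cited argument, and your proposal has a genuine gap at its central step --- a gap which is precisely the phenomenon that makes Bertini-type statements for infinitesimal torsors delicate in the first place.

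The broken step is the claim that if $Z_H=\A^1_k$, i.e.\ if $\iota_a^*P$ reduces to $H$ for \emph{every} translate $\iota_a$ of the fixed coordinate hyperplane, then a spreading-out argument applied to $P/H\lto\A^n_k$ produces a global reduction of $P$ to $H$. This is false. Take $n=2$, $\Gamma=\alpha_p$, $H=\{1\}$, and let $P\lto\A^2_k=\Spec k[x,y]$ be the $\alpha_p$-torsor $z^p=x^py$. Since $x^py$ is not a $p$-th power in $k[x,y]$, the classifying map $\pi^{\loc}(\A^2_k)\lto\alpha_p$ is surjective; but on every line $y=a$ the equation becomes $z^p=(a^{1/p}x)^p$ (recall $k$ is perfect), so $\iota_a^*P$ is trivial for every $a$. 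Thus $H_a=\{1\}$ for all $a$ and $Z_{\{1\}}=\A^1_k$, yet $P$ does not reduce to the trivial subgroup. The fiberwise trivializations $z=a^{1/p}x$ involve inseparable functions of the parameter and do not glue: for purely inseparable covers, a section over every fiber of a fibration does not promote to a global section. The same example shows that your overall strategy of using only translates of a single coordinate hyperplane cannot succeed, since for this $P$ no member of your family works and one must vary the direction of the hyperplane (here $x=a$ with $a\neq 0$ does the job). There are also secondary points you would need to justify --- that $a\longmapsto\Lie(H_a)$ is actually a morphism into the scheme of restricted Lie subalgebras rather than a set-theoretic assignment, and that the subgroup $H_\eta$ obtained at the generic point is defined over $k$ rather than over $k(\eta)$ --- but these are moot given the failure above. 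The proof cited in \cite{ot18} is organized precisely around this Frobenius-twisting obstruction and selects a sufficiently general affine hyperplane, not a translate of a fixed one.
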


The following is an immediate consequence of Theorem \ref{thm:AC Sigma}.

\begin{cor}\label{cor:AC chevalley}
Let $L_k$ be a Chevalley algebra over a perfect field $k$ of
characteristic $p>3$ associated with a semisimple complex Lie
algebra $L_{\C}$. Then
$\mathfrak{G}(L_k)\in \pi^\loc_A(\A^1_k)$.
\end{cor}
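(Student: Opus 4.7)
The plan is to reduce this statement directly to Theorem \ref{thm:AC Sigma}, by identifying $\mathfrak{G}(L_k)$ with the first Frobenius kernel of a split semisimple simply connected algebraic group over $k$.

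First, starting from the complex semisimple Lie algebra $L_{\C}$, I would choose the connected, simply connected semisimple algebraic group $G_{\C}$ over $\C$ with $\Lie(G_{\C}) \simeq L_{\C}$. Let $G$ denote the associated Chevalley--Demazure $\Z$-group scheme, and set $\Sigma \Def G \otimes_{\Z} k$. By the defining properties of the Chevalley--Demazure scheme recalled in \S\ref{subsec:classical}, $\Sigma$ is a split semisimple simply connected algebraic group over $k$ (of the same Dynkin type as $G_{\C}$), so Theorem \ref{thm:AC Sigma} applies to it.

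Next, I would identify $L_k$ with $\Lie(\Sigma)$ as restricted Lie algebras. Since $p > 3$, the integer $6$ is invertible in $k$, so Lemma \ref{lem:mathieu} yields $L_{\Z} \otimes_{\Z} k = \Lie(G) \otimes_{\Z} k = \Lie(\Sigma)$, whence $L_k \simeq \Lie(\Sigma)$ as Lie algebras. The restricted structure on $\Lie(\Sigma)$ inherited from $\Sigma$ agrees with the restricted structure on $L_k$ described in the text via the Chevalley basis (i.e.\ $x_\alpha^{[p]} = 0$ and $h_i^{[p]} = h_i$), as already noted in \S\ref{subsec:classical}. Under the equivalence \eqref{eq:scheme vs restricted Lie algebra} between finite local $k$-group schemes of height one and finite dimensional restricted Lie algebras over $k$, this forces
\[
\mathfrak{G}(L_k) \simeq \Sigma_{(1)} = \Ker\bigl(F^{(1)}\colon \Sigma \to \Sigma^{(1)}\bigl),
\]
since $\Sigma_{(1)}$ is the finite local $k$-group scheme of height one with restricted Lie algebra $\Lie(\Sigma)$.

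Finally, I would apply Theorem \ref{thm:AC Sigma} to $\Sigma$ with $r = 1$ to conclude $\Sigma_{(1)} \in \pi_A^{\loc}(\A^1_k)$, which by the previous identification gives $\mathfrak{G}(L_k) \in \pi_A^{\loc}(\A^1_k)$. There is no real obstacle here: everything reduces to matching the restricted structure on $L_k$ (described via the Chevalley basis) with the one on $\Lie(\Sigma)$ (coming from the group scheme $\Sigma$), which is exactly what is recorded in \S\ref{subsec:classical}. The key inputs are thus the existence of the simply connected Chevalley--Demazure model $\Sigma$ of $L_k$ in characteristic $p > 3$ (via Lemma \ref{lem:mathieu}) and the Frobenius-kernel case of the analogue of Nori's theorem, Theorem \ref{thm:AC Sigma}.
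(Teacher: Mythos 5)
Your proposal is correct and follows essentially the same route as the paper: choose the simply connected Chevalley--Demazure model $G_k$ of $L_{\C}$, use Lemma \ref{lem:mathieu} (valid since $p>3$) to identify $L_k$ with $\Lie(G_k)$ as restricted Lie algebras, hence $\mathfrak{G}(L_k)$ with the first Frobenius kernel $G_{k(1)}$, and conclude by Theorem \ref{thm:AC Sigma}. The only difference is that you spell out the matching of restricted structures in slightly more detail, which the paper records beforehand in \S\ref{subsec:classical}.
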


\begin{proof}
As $L_{\C}$ is a direct sum of complex simple Lie algebras, there exists a
semisimple simply connected algebraic $G_{\C}$ group over $\C$ such that
$\Lie(G_{\C})\simeq L_{\C}$. Then the associated Chevalley group $G_k$ over $k$
is a split semisimple simply connected algebraic group. As $p>3$, by Lemma
\ref{lem:mathieu}, we have $\Lie(G_k)\simeq L_k$. In other words, $\mathfrak{G}(L_k)$ is isomorphic to the first Frobenius kernel of the Chevalley group $G_k$. As $G_k$ is simply connected, the assertion is thus immediate from Theorem \ref{thm:AC Sigma}. This completes the proof. 
\end{proof}

In particular, we can conclude the following.

\begin{cor}\label{cor:AC classical}
Let $k$ be an algebraically closed field of characteristic $p>3$. Then $\mathfrak{G}(L_{[p]})\in \pi^\loc_A(\A^1_k)$~\textup{(cf.\ Definition \ref{def:p-env})} for any
simple Lie algebra $L$ over $k$ of classical type.
\end{cor}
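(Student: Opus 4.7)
The plan is to reduce Corollary \ref{cor:AC classical} directly to the results proved in the preceding paragraphs. By Definition \ref{def:classical type} a classical simple Lie algebra $L$ over $k$ falls in one of two families: either $L$ is a Chevalley algebra $L_k$ (for a simple complex $L_\C$ not of type $A_n$ with $n\equiv -1\bmod p$), or $L\simeq\mathfrak{psl}_{n+1,k}$ with $n\equiv -1\bmod p$. I will treat the two families in turn.

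First I will show that in both cases $L$ carries a canonical restricted structure for which the simple restricted Lie algebra $L_{[p]}$ of Definition \ref{def:p-env} is isomorphic to $L$ equipped with this structure. Since $L$ is simple it is centerless, so $\ad\colon L\hookrightarrow\Der(L)$ is injective; moreover, once $L$ is restricted, $\ad$ is automatically a morphism of restricted Lie algebras (because $\ad(x^{[p]})=\ad(x)^p$), so $\ad(L)$ is closed under $[p]$ in $\Der(L)$, and hence $L_{[p]}=\ad(L)\simeq L$. In the Chevalley case Lemma \ref{lem:mathieu} provides $L=\Lie(G_k)$ for the simply connected split Chevalley group $G_k$, giving the restricted structure for free. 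In the $\mathfrak{psl}_{n+1,k}$ case I will use that $\mathfrak{sl}_{n+1,k}=\Lie(\SL_{n+1,k})$ is restricted and that its one-dimensional center $Z=k\cdot(h_1+2h_2+\cdots+nh_n)$ is closed under $[p]$, as already noted right before Definition \ref{def:classical type}; this lets the restricted structure descend to the quotient $\mathfrak{psl}_{n+1,k}$.

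Second, with the identification $\mathfrak{G}(L_{[p]})\simeq\mathfrak{G}(L)$ in hand, I will produce the desired surjections. For the Chevalley case Corollary \ref{cor:AC chevalley} directly yields $\mathfrak{G}(L_{[p]})=\mathfrak{G}(L_k)\in\pi^\loc_A(\A^1_k)$. For the $\mathfrak{psl}_{n+1,k}$ case, the projection $\mathfrak{sl}_{n+1,k}\twoheadrightarrow\mathfrak{psl}_{n+1,k}$ is a surjective homomorphism of restricted Lie algebras, so under the equivalence \eqref{eq:scheme vs restricted Lie algebra} it corresponds to a surjection $\SL_{n+1,k,(1)}\twoheadrightarrow\mathfrak{G}(L_{[p]})$. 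Theorem \ref{thm:AC Sigma} applied to $\Sigma=\SL_{n+1,k}$ with $r=1$ supplies $\SL_{n+1,k,(1)}\in\pi^\loc_A(\A^1_k)$, and since the set $\pi^\loc_A(\A^1_k)$ is clearly closed under taking quotients (a quotient of a quotient of $\pi^\loc(\A^1_k)$ is again a quotient), the conclusion follows.

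The only real content lies in the identification $L_{[p]}\simeq L$, and even there the substantive work is confined to the $\mathfrak{psl}_{n+1,k}$ case, where one must check that the restricted structure descended from $\mathfrak{sl}_{n+1,k}$ is the one singled out by the Viviani correspondence of Theorem \ref{thm:viviani}; this is the only step I expect to require any care, and it is essentially bookkeeping. The rest of the argument is a formal manipulation, consistent with the authors' assertion that the classical case is immediate from Theorem \ref{thm:AC Sigma}.
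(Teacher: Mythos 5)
Your proposal is correct and follows essentially the same route as the paper: the paper's proof simply observes that every classical simple Lie algebra is a quotient of a Chevalley algebra and invokes Corollary \ref{cor:AC chevalley}, which is exactly your argument with the two cases (Chevalley algebra versus $\mathfrak{psl}_{n+1,k}$) and the identification $L_{[p]}\simeq L$ spelled out more explicitly. The extra care you take with the $p$-envelope and the descent of the restricted structure to $\mathfrak{psl}_{n+1,k}$ is correct and harmless, just more detail than the paper records.
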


\begin{proof}
By Definition \ref{def:classical type}, any simple Lie algebra over $k$ of classical type is a quotient of the Chevalley algebra over $k$ associated with a complex simple Lie algebra $L_{\C}$. Therefore, the assertion is immediate from Corollary \ref{cor:AC chevalley}. 
\end{proof}

\begin{rem}\label{rem:classical p=3}
If $k$ is of characteristic $p=3$, a Chevalley algebra $L_k$  is simple unless it is of type 
\begin{equation*}
A_n~(n\equiv -1~\text{mod}~3),~E_6,~G_2,
\end{equation*}
in which case each of them has a unique ideal~(cf.\ \cite[\S4.4]{Strade}). The simple quotients of the Chevalley algebras over an algebraically closed field of characteristic $p=3$ are also called \textit{classical}.
\end{rem}


\section{A criterion for $\Z$-graded Lie algebras}\label{sec:AC graded Lie alg}

\subsection{The group of Witt vectors}\label{subsec:witt vector}

In this subsection, we collect some foundations on the group of Witt vectors from \cite[\S1.5]{sobaje}. 
Let $k$ be a field of characteristic $p>0$. For an integer $n\ge 1$, let $W_n$ denote the $k$-group scheme of Witt vectors of length $n$ with standard coordinates $(T_0,T_1,\dots,T_{n-1}):W_n\xrightarrow{~\simeq~}\A^n_k$~(cf.\ \cite[\S VII.8]{serre GTM}). The group scheme $W_n$ is an abelian  unipotent algebraic group over $k$. Recall that if $n=1$, then $W_1=\G_{a,k}$, the additive group, and for  $n>1$, it fits into the following exact sequence of abelian algebraic groups
\begin{equation*}
1\lto W_{n-1}\lto W_n\lto\G_{a,k}\lto 1.
\end{equation*}
On the other hand, via the coordinates $(T_0,T_1,\dots,T_{n-1})$, its Lie algebra $\Lie(W_n)$ can be identified with
\begin{equation*}
\Lie(W_n)=\sum_{i=0}^{n-1}k\cdot\frac{\partial}{\partial T_i}
\end{equation*} 
on which the restricted structure is given by
\begin{equation*}
\Bigl(\frac{\partial}{\partial T_i}\Bigl)^{[p]}=
\begin{cases}
\frac{\partial}{\partial T_{i+1}}&\text{if $0\le i\le n-2$},\\
0&\text{if $i=n-1$}.
\end{cases}
\end{equation*}
Notice that the first Frobenius kernel $W_{n(1)}\Def\Ker(F^{(1)}:W_n\lto W_n^{(1)})$ is the Cartier dual of the group scheme $\alpha_{p^n}=\Ker(F^{(n)}:\G_{a,k}\lto\G_{a,k}^{(n)})$.  

Next recall that the \textit{Artin--Hasse exponential series} is the formal power series with coefficients in $\Q$ defined to be
\begin{equation*}
E_p(T)\Def\exp\Biggl(\sum_{j=0}^{\infty}\frac{T^{p^j}}{p^j}\Biggl)\in\Q[[T]]
\end{equation*}  
As is well-understood, we have $E_p(T)\in\Z_{(p)}[[T]]$~(cf.\ \cite[\S V.16]{serre GTM}). Thus, by taking reduction modulo $p$, we obtain
\begin{equation}\label{eq:e_p}
e_p(T)\Def E_p(T)~\text{mod $p$}~\in\F_p[[T]]\subseteq k[[T]].
\end{equation}

The following fact will play a fundamental role in the proof of Theorem \ref{thm:AC FLA}.

\begin{thm}\label{thm:f_X}
Let $V$ be a finite dimensional $k$-vector space. Let $X\in\fgl_V$ be an endomorphism of $V$ such that $X^{p^n}=0$ and $X^{p^{n-1}}\neq 0$ for some integer $n\ge 1$. Then the map
\begin{equation}\label{eq:f_X}
f_X:W_n\lto\GL_V
\end{equation}
defined by
\begin{equation*}
(a_0,a_1,\dots,a_{n-1})\longmapsto e_p(a_0X)e_p(a_1X^p)\cdots e_p(a_{n-1}X^{p^{n-1}})
\end{equation*}
gives a closed and injective homomorphism of $k$-group schemes. Moreover, the induced map $df_X:\Lie(W_n)\lto\fgl_V$ on the Lie algebras is given by
\begin{equation}\label{eq:df_X}
df_X\Bigl(\frac{\partial}{\partial T_i}\Bigl)=X^{p^{i}} 
\end{equation}
for $0\le i\le n-1$.
\end{thm}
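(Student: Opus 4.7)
My plan is to establish well-definedness, the group law, the differential formula, and injectivity (closedness will then be automatic by item (4) of \nameref{s:notation}). Since $X^{p^n}=0$, each $X^{p^i}$ for $0\le i\le n-1$ is nilpotent, so for any $k$-algebra $R$ and any $a_i\in R$ the series $e_p(a_iX^{p^i})$ truncates to a polynomial in $X$ with constant term $1$, hence defines an element of $\GL_V(R)$; moreover all the factors commute, being polynomials in $X$. This makes $f_X$ a morphism of $k$-schemes, and the commutativity of the factors yields the differential computation: each partial derivative $(d/da_i)_{0}\,e_p(a_iX^{p^i})$ equals $X^{p^i}$ because $e_p(T)=1+T+O(T^2)$, establishing \eqref{eq:df_X}.

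The crux is the homomorphism property, which I would prove universally by exploiting the ghost-component description of Witt addition. In $\Q[[T,a_0,a_1,\dots]]$ one computes
\begin{equation*}
\log\prod_{i\ge 0}E_p(a_iT^{p^i})=\sum_{i,j\ge 0}\frac{a_i^{p^j}\,T^{p^{i+j}}}{p^j}=\sum_{n\ge 0}\frac{w_n(a)}{p^n}\,T^{p^n},
\end{equation*}
where $w_n(a)=\sum_{j=0}^{n}p^{n-j}a_{n-j}^{p^j}$ is the $n$-th Witt ghost component. Since Witt addition $s=a+_Wb$ is characterized by additivity of ghost components, exponentiating gives the universal identity $\prod_iE_p(s_iT^{p^i})=\prod_iE_p(a_iT^{p^i})\cdot\prod_iE_p(b_iT^{p^i})$. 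A priori this lives in $\Q[[T,a_i,b_i]]$, but each factor already has coefficients in $\Z_{(p)}$, so the identity descends modulo $p$ to an identity for $e_p$. Specializing $T$ to $X$ (the infinite product truncates at $i=n-1$ since $X^{p^n}=0$) yields $f_X(s)=f_X(a)f_X(b)$ in $\GL_V(R)$ for all $a,b\in W_n(R)$, so $f_X$ is a group homomorphism.

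For injectivity on $R$-points I would pick $v\in V$ with $X^{p^{n-1}}v\neq 0$ (possible since $X^{p^{n-1}}\neq 0$) and let $m$ be maximal with $X^mv\neq 0$; then $m\ge p^{n-1}$ and $v,Xv,\dots,X^mv$ are $k$-linearly independent. If $f_X(a)=1$ in $\GL(V\otimes_kR)$, then the cyclic $R[X]$-submodule generated by $v\otimes 1$, which via $X\leftrightarrow T$ is isomorphic to $R[T]/(T^{m+1})$, is fixed, so $\prod_i e_p(a_iT^{p^i})=1$ in $R[T]/(T^{m+1})$. Comparing coefficients of $T^{p^i}$ inductively on $i=0,1,\dots,n-1$ now forces $a_i=0$: once $a_0=\cdots=a_{i-1}=0$ the only factor contributing below degree $p^{i+1}$ is $e_p(a_iT^{p^i})=1+a_iT^{p^i}+\cdots$, and $T^{p^i}$ is nonzero in $R[T]/(T^{m+1})$ because $p^i\le p^{n-1}\le m$. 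Hence $f_X$ is injective on all $R$-points, and item (4) of \nameref{s:notation} upgrades this to a closed embedding. The main obstacle, as expected, is the universal Witt-vector identity for the Artin--Hasse exponential; the rest is formal bookkeeping.
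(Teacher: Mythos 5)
Your argument is correct, but it takes a genuinely different route from the paper. The paper's own proof is essentially a reduction plus a citation: it base-changes to an algebraic closure $\bar k$ (using $f_{X\otimes_k\bar k}=f_X\times_k\bar k$) and then invokes Proud's theorem on Witt groups and unipotent elements [Proud, Theorem 7.4] for the algebraically closed case. You instead give a self-contained proof whose engine is the classical universal identity
\begin{equation*}
\prod_{i\ge 0}E_p(s_iT^{p^i})=\prod_{i\ge 0}E_p(a_iT^{p^i})\cdot\prod_{i\ge 0}E_p(b_iT^{p^i}),\qquad s=a+_Wb,
\end{equation*}
obtained by matching ghost components after taking logarithms over $\Q$ and then descending from $\Z_{(p)}$ to $\F_p$; specializing $T\mapsto X$ (legitimate since $X^{p^n}=0$ truncates everything) gives the homomorphism property directly over any base, with no need to pass to $\bar k$. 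Your injectivity argument via a cyclic vector $v$ with $X^{p^{n-1}}v\neq 0$ and coefficient comparison in $R[T]/(T^{m+1})$, and the computation of $df_X$ from $e_p(T)=1+T+O(T^2)$, are both sound, and the upgrade from monomorphism to closed immersion is exactly the paper's convention (4) in the Notations. What your approach buys is transparency and independence from Proud's machinery (which concerns embeddings of Witt groups into algebraic groups in much greater generality); what the paper's approach buys is brevity. The one point worth making explicit if you write this up is that the truncated sum polynomials $s_0,\dots,s_{n-1}$ depend only on $a_0,\dots,a_{n-1},b_0,\dots,b_{n-1}$, so the identity for the full Witt scheme really does descend to $W_n$ — you use this implicitly and it is standard, but it is the hinge on which the specialization step turns.
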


\begin{proof} Let $\bar{k}$ be an algebraic closure of $k$, and let
    $X\otimes_k\bar{k}$ be the base change of $X\in\fgl_V$ to
    $\fgl_{V\otimes_k\bar{k}}$. Then we have
    $f_{X\otimes_k\bar{k}}=f_X\times_k\bar{k}$ and
    $df_{X\otimes_k\bar{k}}=df_X\otimes_k\bar{k}$. Thus the claim is reduced
    to the case when $k=\bar{k}$, \emph{i.e.} when $k$ is
    algebraically closed. The latter follows from
    \cite[Theorem 7.4]{Proud}.
\end{proof}
\todo[inline]{Is the above argument right?}

\begin{rem}\label{rem:f_X}
Let $V$ be a finite dimensional $k$-vector space. Let $X_1,X_2\in\fgl_V$ be endomorphisms of $V$ such that, for each $i=1,2$, there exists an integer $n_i>0$ satisfying $X_i^{p^{n_i}}=0$ and $X_i^{p^{{n_i}-1}}\neq 0$. If $[X_1,X_2]=0$ in $\fgl_V$, then, the maps $f_{X_1}:W_{n_1}\lto\GL_V$ and $f_{X_2}:W_{n_2}\lto\GL_V$ defined in Theorem \ref{thm:f_X} commutes with each other. This is valid because, by the equation (\ref{eq:e_p}), for any $k$-algebra $A$ and any $a_i\in W_{n_i}(A)$ for $i=1,2$, each images $f_{X_i}(a_i)$ can be described as a power series of $X_i$ with coefficients in $A$, and hence $f_{X_1}(a_1)$ and $f_{X_2}(a_2)$ commute with each other if $[X_1,X_2]=0$. 
Therefore, in particular, it follows that the product
\begin{equation*}
W_{n_1}\times W_{n_2}\lto\GL_V\,;\,(t,s)\longmapsto f_{X_1}(t)\cdot f_{X_2}(s)
\end{equation*} 
gives a homomorphism of $k$-group schemes. 
\end{rem}

\subsection{Filtered Lie algebras}\label{subsec:FLA}

Let $k$ be a field of characteristic $p>0$.

\begin{definition}\label{def:GRLA}
A $\Z$-\textit{graded Lie algebra} over $k$ is a Lie algebra $L$ whose underlying vector space over $k$ has a grading indexed by $\Z$,
\begin{equation*}
L=\bigoplus_{i\in\Z}L_i
\end{equation*}
satisfying
\begin{equation*}
[L_i,L_j]\subseteq L_{i+j}
\end{equation*}
for any $i,j\in\Z$. For a finite dimensional graded Lie algebra $L$, the \textit{depth} of $L$, which we denote by $s(L)$, means the minimal number $s\ge 0$ such that $L_{-j}=0$ for any $j> s$, and the \textit{height} of $L$, which we denote by $h(L)$, means the minimal number $h\ge 0$ such that $L_{i}=0$ for any $i> h$. A $\Z$-\textit{graded restricted Lie algebra} $L$ is a $\Z$-graded Lie algebra $L=\bigoplus_{i\in\Z}L_i$ which has a restricted structure $[p]:L\lto L$ satisfying
\begin{equation*}
L_i^{[p]}\subseteq L_{ip}
\end{equation*}
for any $i\in\Z$, where $L^{[p]}$ is the image of the map $[p]:L\lto L$.  

For each integer $j\in\Z$, we set
\begin{equation*}
L_{\le j}\Def\bigoplus_{i\le j}L_i.  
\end{equation*}
\end{definition}

\begin{ex}
Let $L=\fsl_2$ be the restricted Lie algebra associated with the special linear algebraic group $\SL_2$ over a field $k$ of characteristic $p>2$. Then the root decomposition of $L$,
\begin{equation*}
L=L_{-2}\oplus L_0\oplus L_2
\end{equation*}
endows $L$ with a structure of $\Z$-graded restricted Lie algebra. 
\end{ex}

\begin{ex}
Let $L=\Der_k(k[X]/(X^p))$ be the Lie algebra of derivations of the truncated polynomial ring $k[X]/(X^p)$ with restricted structure given by the $p$-th power map of derivations $[p]:L\lto L;D\mapsto D^p$. If we put $e_i=X^{i+1}\frac{d}{dX}$ for $-1\le i\le p-2$, then we have
\begin{equation*}
L=\bigoplus_{i=-1}^{p-2}L_i\quad\text{with}\quad L_i=k\cdot e_i.
\end{equation*}
Moreover,
\begin{equation*}
[e_i,e_j]=(j+i+1)e_{i+j},\quad e_i^{[p]}=0~\text{for}~i\neq 0~\text{and}~e_0^{[p]}=e_0. 
\end{equation*}
The Lie algebra $L$ has a structure of $\Z$-graded restricted Lie algebra over $k$.  
\end{ex}

\begin{definition}\label{def:FLA}
A \textit{filtered Lie algebra} over $k$ is a Lie algebra $L$ together with a descending filtration
\begin{equation*}
L=\bigcup_{i\in\Z} L_{(i)}\quad\text{with}\quad L_{(i)}\supseteq L_{(i+1)}
\end{equation*} 
satisfying $L=L_{(-i)}$ for $i\gg 0$, $L_{(j)}=0$ for $j\gg 0$ and $[L_{(i)},L_{(j)}]\subseteq L_{(i+j)}$ for any $i,j\in\Z$. For a filtered Lie algebra $L$ over $k$, the \textit{depth} of $L$, which we denote by $s(L)$, means the minimal number $s\ge 0$ such that $L_{(-j)}=L$ for any $j\ge s$, and the \textit{height} of $L$, which we denote by $h(L)$, means the minimal number $h\ge 0$ such that $L_{(i)}=0$ for any $i> h$. 
For a filtered Lie algebra $L$ over $k$, one can associate the $\Z$-graded Lie algebra $\gr L$ in the sense of Definition \ref{def:GRLA} by setting
\begin{equation*}
\gr_iL\Def L_{(i)}/L_{(i+1)}\quad\text{and}\quad\gr L\Def\bigoplus_{i\in\Z}\gr_i L
\end{equation*}
(cf.\ \cite[4.3.3]{Bois}). More precisely, let us denote by $\gr_i:L_{(i)}\twoheadrightarrow L_{(i)}/L_{(i+1)}=\gr_iL$ the natural projection map for each $i\in\Z$, and set $\overline{x}\Def\gr_{\nu(x)}(x)$ for each nonzero element $0\neq x\in L$, where $\nu(x)\Def\max\{k\in\Z\,|\,x\in L_{(k)}\}$. Then, for any nonzero elements $0\neq x,y\in L$, we have
\begin{equation}\label{eq:lie bracket gr L}
[\overline{x},\overline{y}]=\gr_{\nu(x)+\nu(y)}([x,y])=
\begin{cases}
\overline{[x,y]}&\text{if $[x,y]\not\in L_{(\nu(x)+\nu(y)+1)}$},\\
0&\text{if $[x,y]\in L_{(\nu(x)+\nu(y)+1)}$},
\end{cases}
\end{equation}
in $\gr L$. The notions of depth and height are also compatible, \emph{i.e.}\  $s(L)=s(\gr L)$ and $h(L)=h(\gr L)$. Conversely, one can consider any $\Z$-graded Lie algebra $L=\oplus_{i\in\Z}L_i$ over $k$ as a filtered Lie algebra by setting
\begin{equation*}
L_{(i)}\Def \bigoplus_{j\ge i}L_j. 
\end{equation*}
\end{definition}

\begin{lem}\label{lem:GLA vs FLA}
Let $L$ be a Lie algebra over $k$ 
\begin{enumerate}
\renewcommand{\labelenumi}{(\arabic{enumi})}
\item Let $L$ be a filtered Lie algebra over $k$, and let $S\subseteq L$
    be a subset. If the image of $S$ in $\gr L$ generates $\gr L$, then $L=\langle
    S\rangle_{\Lie}$.
\item Let $U^-,U^+\subseteq L$ be Lie subalgebras. Suppose $L=\langle
    U^-,U^+\rangle_{\Lie}$. Let $L_{[p],i}$ be the $p$-envelope of
    $L$ with respect to an injective homomorphism $i:L\lto G$ into a restricted
    Lie algebra $G$. Then $L_{[p],i}=\langle
    U^-_{[p],i},U^+_{[p],i}\rangle_{p-\Lie}$.  
\item Let $L\subset\fgl_V$ be a Lie subalgebra for some finite
    dimensional $k$-vector space $V$ and $A,B,C\in L$ elements. Suppose that $AB=BA=0$ in $\fgl_V$. Then we have $ACB+BCA\in L$. In particular, if $p>2$, then for any $A,B\in L$ with $A^2=0$, we have $(1-A)B(1+A)\in L$. 
\end{enumerate}
\end{lem}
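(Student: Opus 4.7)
For part (1), the plan is downward induction on the filtration index. Given a nonzero $x \in L$, set $\nu(x) \Def \max\{j \in \Z : x \in L_{(j)}\}$; since $L_{(j)} = 0$ for $j > h(L)$, the integer $h(L) - \nu(x)$ is nonnegative, and I will induct on it. The image $\overline{x} = \gr_{\nu(x)}(x) \in \gr_{\nu(x)} L$ is homogeneous, and by hypothesis the homogeneous images $\overline{s} = \gr_{\nu(s)}(s)$ of the elements $s \in S$ generate the graded Lie algebra $\gr L$. Hence $\overline{x}$ can be written as a sum of iterated brackets of such $\overline{s}$'s whose total grading is exactly $\nu(x)$. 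Using the bracket identity \eqref{eq:lie bracket gr L} iteratively, each nonzero iterated bracket in $\gr L$ is the image of the corresponding bracket $[s_{i_1},[s_{i_2},\dots]] \in \langle S \rangle_{\Lie} \cap L_{(\nu(x))}$; summing these lifts produces an element $P \in \langle S \rangle_{\Lie}$ with $\gr_{\nu(x)}(P) = \overline{x}$. Then $x - P \in L_{(\nu(x)+1)}$ has strictly larger filtration index, which closes the induction.

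For part (2), the argument is purely formal from the universal property of the $p$-envelope. Set $M \Def \langle U^-_{[p],i}, U^+_{[p],i} \rangle_{p-\Lie}$. On one hand, $M$ is a restricted Lie subalgebra of $G$ and, being in particular a Lie subalgebra containing $i(U^-) \cup i(U^+)$, it contains the Lie subalgebra these sets generate in $G$; the latter equals $i(L)$, since $i$ is a Lie algebra homomorphism and $L = \langle U^-, U^+ \rangle_{\Lie}$ by hypothesis. Hence $M$ is a restricted Lie subalgebra of $G$ containing $i(L)$, so minimality of $L_{[p],i}$ gives $L_{[p],i} \subseteq M$. On the other hand, $L_{[p],i}$ is itself a restricted Lie subalgebra of $G$ containing $i(U^\pm)$, so the minimality of $U^\pm_{[p],i}$ forces $L_{[p],i} \supseteq U^\pm_{[p],i}$, whence $L_{[p],i} \supseteq M$.

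For part (3), a direct expansion of the double commutator gives
\begin{equation*}
[A,[C,B]] \;=\; ACB - ABC - CBA + BCA \;=\; ACB + BCA,
\end{equation*}
since $ABC = (AB)C = 0$ and $CBA = C(BA) = 0$. As $L$ is closed under the bracket, $ACB + BCA \in L$. For the ``in particular'' clause, one applies this identity to the triple $(A, A, B)$; the hypothesis $A^2 = 0$ supplies both vanishing conditions, yielding $2 ABA \in L$, hence $ABA \in L$ as $p > 2$. Expanding $(1-A)B(1+A) = B + [B,A] - ABA$ then exhibits it as a sum of elements of $L$.

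The main obstacle, such as it is, appears in part (1): the elements of $S$ need not be homogeneous with respect to any grading of $L$ itself, so the entire lifting argument must be carried out at the level of $\gr L$, where the data becomes homogeneous, and then transported back to $L$ through \eqref{eq:lie bracket gr L}. Parts (2) and (3) are essentially formal, the former from the minimality in the definition of the $p$-envelope and the latter from a short commutator computation.
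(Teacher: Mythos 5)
Your proof is correct and follows essentially the same route as the paper's: parts (2) and (3) are the same arguments verbatim, and in part (1) your descending induction on the filtration index (successive approximation modulo $L_{(\nu(x)+1)}$) is just a reorganization of the paper's argument, which instead shows $\gr\langle S\rangle_{\Lie}=\gr L$ and concludes by finite-dimensionality. Both versions hinge on the bracket formula \eqref{eq:lie bracket gr L} and on the filtration being finite.
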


\begin{proof}
(1)  We consider the map $\overline{(~)} \colon L \lto \gr L$ used in Definition \ref{def:FLA} and denote by $\overline S$ the image of $S$ in $\gr L$. We inductively define $S_0=S$,
 $$
 S_{n+1}=\{ [x,y] \ | \ x,y\in S_n\}\cup S_n \subseteq L
 $$
 and $S_\infty=\cup_n S_n$. It is clear that the $k$-vector space generated by $S_\infty$ is the Lie algebra generated by $S$ inside $L$. Similarly, we define $\overline{S}_n\subset\gr L$ for $n\in\N\cup\{\infty\}$ in the same way. 
 
Notice that $\overline{S_0} = \overline S_0$ by definition and, by induction and the definition of the bracket in $\gr L$, we also have $(\overline S)_n \subseteq \overline{S_n}$ for all $n\in \N \cup \{\infty\}$. Thus, if $L'$ is the Lie algebra generated by $S$, then $\overline S_\infty \subseteq \gr L'$ and therefore $\gr L = \gr L'$. Since all Lie algebras considered have finite dimension we can conclude that $L'=L$.

(2) Without loss of generality, we may assume that $L$ is a Lie subalgebra of
$G$ and $i$ is the natural inclusion $L\subset G$. Let
$L'\coloneqq \langle U^-_{[p],i}, U^+_{[p],i}\rangle_{p-\Lie}$. As
$L\subseteq L'\subseteq G$  and $L'$ is a restricted Lie subalgebra of $G$, we
have $L_{[p],i}\subseteq L'$.  However, as $U^-_{[p],i},U^+_{[p],i}\subseteq
L_{[p],i}$, we also have $L'=\langle
U^-_{[p],i},U^+_{[p],i}\rangle_{p-\Lie}\subseteq
L_{[p],i}$. Thus it follows that $L_{[p],i}=L'$. This completes the proof.

(3) Indeed, we have $ACB+BCA=ACB-ABC-CBA+BCA=A[C,B]+[B,C]A=[[B,C],A]\in L$. Let us prove the last assertion. Suppose that  $p>2$ and suppose given $A,B\in L$ with $A^2=0$. Then, we have
\begin{equation*}
(1-A)B(1+A)=B+[A,B]-ABA 
\end{equation*}
in $\fgl_V$. As $p>2$, by the first assertion, we have $ABA=(ABA+ABA)/2\in L$, hence $(1-A)B(1+A)\in L$. This completes the proof. 
\end{proof}

\begin{rem}
For a Lie algebra $L$ over a field $k$, an element $c\in L$ satisfying $\ad(c)\ad(a)\ad(c)=0$ is called a \textit{sandwich element}~(cf.\ \cite[\S1.5]{kos90}). Such elements will play an important role in the present paper. Suppose that $p>2$. Let $c\in L$ be an element such that $\ad(c)^2=0$.  Then for any element $a\in L$, we have $\ad(c)\ad(a)\ad(c)=0$~(cf.\ \cite[\S1.5.3]{kos90}), hence $c$ is a sandwich element. Indeed, for an arbitrary element $b\in L$,  one can compute
\begin{equation*}
\begin{aligned}
\ad(c)\ad(a)\ad(c)(b)
&=\ad(c)(\ad(a)([c,b]))=\ad(c)([\ad(a)(c),b]+[c,\ad(a)(b)])\\
&=\ad(c)([[a,c],b])+\ad(c)^2\ad(a)(b)=\ad(c)([[a,c],b])\\
&=[\ad(c)([a,c]),b]+[[a,c],\ad(c)(b)]=-[\ad(c)^2(a),b]+[[a,c],\ad(c)(b)]\\
&=\ad([a,c])(\ad(c)(b))=(\ad(a)\ad(c)-\ad(c)\ad(a))(\ad(c)(b))\\
&=\ad(a)\ad(c)^2(b)-\ad(c)\ad(a)\ad(c)(b)=-\ad(c)\ad(a)\ad(c)(b),
\end{aligned}
\end{equation*}
which implies that $2\ad(c)\ad(a)\ad(c)(b)=0$. As $p>2$, we get the desired vanishing. 
\end{rem}


\subsection{A criterion for the realization}\label{subsec:criterion}

The following is the key theorem for the proof of the main result.

\begin{thm}\label{thm:AC FLA}
Let $L$ be a Lie algebra over a perfect field $k$ of characteristic $p>0$. Let $U^{+},U^{-}\subseteq L$ be subspaces of $L$ and $\rho:L\hookrightarrow\fgl_V$ an injective homomorphism of Lie algebras for some finite dimensional $k$-vector space $V$. Suppose that the following conditions are satisfied.
\begin{enumerate}
\renewcommand{\labelenumi}{{\rm (\Roman{enumi})}}
\item $\langle U^+,U^-\rangle_{\Lie}=L$.
\item $[U^-,U^-]=0$, and $U^-$ admits a basis $\partial_1,\dots,\partial_m$ such that $\rho(\partial_i)^{l}=0$ for $l\gg 0$. 
\item $U^+$ admits a basis $D_1,\dots,D_n$ such that $\rho(D_i)^2=0$ for any
    $1\le i\le n$. If $p=2$, we further assume that $\rho(D_i)\cdot\rho(D_j)=0$
    for any $i\neq j$, and that $\rho(D_i)\cdot U^-_{[p],\rho}\cdot\rho(D_i)\in
    L_{[p],\rho}$ for any $1\le i\le n$. 
\end{enumerate}
Then $\Gamma\coloneqq
\mathfrak{G}(L_{[p],\rho})\in \pi^\loc_A(\A^1_k)$.
\end{thm}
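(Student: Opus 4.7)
The plan is to realize $\Gamma$ as the image of the classifying map of a specific $\GL_{V(1)}$-torsor on some $\A^N_k$, and then descend to $\A^1_k$ via Lemma \ref{lem:bertini}. For the construction, I would combine the Artin--Hasse exponential maps of Theorem \ref{thm:f_X}: since $[U^-,U^-]=0$ and the $\rho(\partial_i)$ are nilpotent, Remark \ref{rem:f_X} shows that the maps $f_{\rho(\partial_i)}\colon W_{n_i}\to \GL_V$ (for $n_i$ minimal with $\rho(\partial_i)^{p^{n_i}}=0$) commute and yield a group homomorphism $f^-\colon \prod_i W_{n_i}\to \GL_V$ whose image has restricted Lie algebra $U^-_{[p],\rho}$. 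For each $D_j$, the hypothesis $\rho(D_j)^2=0$ truncates the Artin--Hasse series to the simple homomorphism $f_{\rho(D_j)}(t)=1+t\rho(D_j)\colon \G_a\to \GL_V$. I then assemble these into a scheme morphism
\[ \Phi\colon \A^N_k=\G_a^n\times\textstyle\prod_i W_{n_i}\to \GL_V,\quad (t,a)\mapsto f_{\rho(D_1)}(t_1)\cdots f_{\rho(D_n)}(t_n)\cdot f^-(a), \]
with $N=n+\sum_i n_i$, and view $\Phi$ as a morphism $\iota\colon \A^N_k\to \GL_V^{(1)}$ via the identification $\GL_V(R)=\GL_V^{(1)}(R)$ (available because $k$ is perfect), which makes $\iota$ not factor through $F^{(1)}\colon \GL_V\to \GL_V^{(1)}$. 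The pullback $P=\iota^*F^{(1)}$ is then a (typically nontrivial) $\GL_{V(1)}$-torsor on $\A^N_k$.

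The crux is to compute the image $G\subseteq \GL_{V(1)}$ of the classifying map $\pi^\loc(\A^N_k)\to \GL_{V(1)}$ of $P$ and show $G=\Gamma$. The inclusion $\Gamma\subseteq G$ is the easier direction: restricting $\iota$ to each coordinate factor of $\A^N_k$ yields sub-torsors whose classifying images are generated by $f_{\rho(\partial_i)}|_{W_{n_i(1)}}$ or $f_{\rho(D_j)}|_{\alpha_p}$, and by condition (I) combined with Lemma \ref{lem:GLA vs FLA}(2) these together generate $L_{[p],\rho}$ as a restricted Lie subalgebra of $\fgl_V$. The main obstacle is the reverse inclusion $G\subseteq \Gamma$, which requires showing that the non-commutativity of the $\rho(D_j)$'s among themselves and with the generators of $U^-_{[p],\rho}$ does not force $\Lie(G)$ to contain elements outside $L_{[p],\rho}$. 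Here the hypothesis $\rho(D_j)^2=0$ is essential: it implies via Lemma \ref{lem:GLA vs FLA}(3) (the sandwich identity $(1-A)B(1+A)\in L$ for $A^2=0$, valid in characteristic $p>2$) that the conjugation corrections appearing when reordering the non-commuting factors of $\Phi$ all land in $L_{[p],\rho}$. In characteristic $p=2$, where that identity is unavailable, the additional hypotheses on the $D_j$'s in (III) (the vanishing $\rho(D_i)\rho(D_j)=0$ and the containment $\rho(D_i)\cdot U^-_{[p],\rho}\cdot\rho(D_i)\subseteq L_{[p],\rho}$) take over the same role.

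Once $G=\Gamma$ is established, I have a surjective homomorphism $\pi^\loc(\A^N_k)\twoheadrightarrow \Gamma$, and iterating Lemma \ref{lem:bertini} (reducing the dimension of the base affine space by one at each step) produces the desired surjection $\pi^\loc(\A^1_k)\twoheadrightarrow \Gamma$, whence $\Gamma\in \pi^\loc_A(\A^1_k)$.
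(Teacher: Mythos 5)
Your proposal follows essentially the same route as the paper: Artin--Hasse exponentials $f_{\rho(\partial_i)}$ on Witt groups for the commuting nilpotent generators of $U^-$, truncated exponentials $1+t\rho(D_j)$ for the square-zero generators of $U^+$, pullback of $F^{(1)}$ along the product map, identification of the image of the classifying map with $\Gamma$, and descent to $\A^1_k$ by iterating Lemma \ref{lem:bertini}; the two inclusions $\Gamma\subseteq G$ and $G\subseteq\Gamma$ are established by the same mechanisms (generation of $L_{[p],\rho}$ via Lemma \ref{lem:GLA vs FLA}(2), and the sandwich identity of Lemma \ref{lem:GLA vs FLA}(3) respectively). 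The one point where your sketch, as written, would run into trouble is the order of the two blocks in $\Phi(t,a)=\delta_1(t_1)\cdots\delta_n(t_n)\cdot f^-(a)$. To reduce the torsor to $\Gamma$ one shows that $\Phi \bmod \Gamma$ descends through the relative Frobenius of the base, i.e.\ that $\Phi(x)^{-1}\Phi(x+w)\in\Gamma$ for $w$ in the Frobenius kernel of $\G_{a,k}^n\times\prod_iW_{n_i}$; with your ordering this correction term is $f^-(a)^{-1}\,\gamma_0\,f^-(a)\cdot f^-(u)$ with $\gamma_0\in\Gamma$, so you would need conjugation by $f^-(a)$ to preserve $\Gamma$. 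Since the $\rho(\partial_i)$ are only nilpotent of possibly high order, $f^-(a)$ is not of the form $1+A$ with $A^2=0$, the sandwich lemma does not apply, and such conjugations genuinely can leave $\Gamma$ (this is exactly the phenomenon of Example \ref{ex:counter-example in char p=3}). The paper avoids this by taking $\phi(t,s)=\omega(t)\delta(s)$, so that the only conjugations occurring are by the $\delta_j(t_j)=1+t_j\rho(D_j)$, where Lemma \ref{lem:AC FLA} applies; your version is repaired either by swapping the two blocks or by reducing along left cosets $\Gamma\backslash\GL_V$ instead of right ones. Apart from this ordering issue, the proposal matches the paper's argument.
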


For the proof, we need the following lemma.

\begin{lem}\label{lem:AC FLA}
Suppose that $k$ is of characteristic $p>2$. Let $V$ be a finite dimensional vector space over $k$. Let $\Gamma\subset\GL_V$ be a subgroup scheme of height one, \emph{i.e.}\ $\Gamma=\Gamma_{(1)}$. Let $\delta:\G_a\lto\GL_V$ be an injective homomorphism of $k$-group schemes with the associated morphism $d\delta:\Lie(\G_{a,k})\lto\fgl_V$ of restricted Lie algebras. Set $D\Def d\delta(1)$. If $D\in \Lie(\Gamma)$ and $D^2=0$ in $\fgl_V$, then the subgroup scheme $\Gamma\subset\GL_V$ is stable under the conjugacy action by $\delta$, \emph{i.e.}\ the map
\begin{equation*}
\G_{a,k}\times \Gamma\lto \GL_{V}\,;\,(t,u)\longmapsto \delta(t)^{-1}u\delta(t) 
\end{equation*}
has image in $\Gamma$.  
\end{lem}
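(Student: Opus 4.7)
The plan is to exploit the correspondence \eqref{eq:scheme vs restricted Lie algebra} between height-one $k$-group schemes and their restricted Lie algebras to reduce the statement to a Lie-algebra computation. For a height-one closed subgroup $\Gamma\subseteq\GL_V$ with $\fg\coloneqq\Lie(\Gamma)$ and any $g\in\GL_V(R)$, the conjugate subgroup scheme $g\Gamma_R g^{-1}$ is again of height one and corresponds to the restricted Lie subalgebra $\operatorname{Ad}(g)(\fg\otimes R)\subseteq\fgl_V\otimes R$. Hence $g\Gamma_R g^{-1}=\Gamma_R$ if and only if $\operatorname{Ad}(g)$ preserves $\fg\otimes R$.

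I would first reduce to the case $\delta(t)=1+tD$. Because $D^2=0$ (and hence $D^p=0$), the Artin--Hasse exponential of Theorem~\ref{thm:f_X} with $n=1$ produces the injective homomorphism $f_D\colon\G_a\to\GL_V$, $a\mapsto e_p(aD)=1+aD$, whose differential at the origin sends $1$ to $D$. In every application of this lemma the map $\delta$ is obtained in exactly this way, so one may take $\delta=f_D$; the claim then amounts to showing that conjugation by $1+tD\in\GL_V(k[t])$ preserves the subgroup scheme $\Gamma_{k[t]}$, which after specialisation yields the statement at $R$-points for any $k$-algebra $R$.

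The core step is the expansion, valid because $D^2=0$,
\[
(1-tD)\,X\,(1+tD) \;=\; X + t\,[X,D] - t^2\, DXD \qquad (X\in\fg).
\]
The constant term $X$ lies in $\fg$; the coefficient $[X,D]$ lies in $\fg$ since $\fg$ is a Lie subalgebra containing $D$; and the coefficient $-DXD$ lies in $\fg$ by Lemma~\ref{lem:GLA vs FLA}(3) applied with $A=B=D$ (so that $AB=BA=D^2=0$), which gives $DXD+DXD=2\,DXD\in\fg$ and hence $DXD\in\fg$ because $p>2$. This shows $\operatorname{Ad}(\delta(t))$ sends $\fg\otimes k[t]$ into itself; running the same computation with $\delta(t)^{-1}=1-tD$ yields the reverse inclusion, hence equality. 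By the height-one correspondence we obtain $\delta(t)\,\Gamma_{k[t]}\,\delta(t)^{-1}=\Gamma_{k[t]}$, which is precisely the claim.

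The main obstacle I expect is the reduction $\delta(t)=1+tD$. In principle an injective homomorphism $\G_a\to\GL_V$ with a prescribed differential $D$ may carry Frobenius-twisted higher-order corrections of the form $1+tD+t^p E+t^{p^2}E'+\cdots$ (compatible with the group law through conditions such as $DE=ED=E^2=0$), and such corrections need not normalise $\Gamma$. This issue is sidestepped by the fact that in the paper $\delta$ arises as the Artin--Hasse map $f_D$ of Theorem~\ref{thm:f_X}, which for $D$ with $D^2=0$ is exactly the map $a\mapsto 1+aD$; making this reduction explicit is the only delicate point in the proof.
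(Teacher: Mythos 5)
Your proof is correct and follows essentially the same route as the paper's: reduce, via the height-one correspondence, to showing that conjugation by $\delta(t)=1+tD$ preserves $\Lie(\Gamma)$ after base change to $k[t]$ (the paper uses the function field $k(t)$), and then handle the quadratic term $DXD$ with Lemma \ref{lem:GLA vs FLA}(3) and $p>2$. The reduction $\delta(t)=1+tD$ that you flag as the delicate point is simply asserted in the paper's proof (``by our assumption, we have $\delta(t)=\exp(tD)=1+tD$''), so your explicit remark that $\delta$ should arise as the Artin--Hasse map $f_D$ of Theorem \ref{thm:f_X} is, if anything, slightly more careful than the published argument.
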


\begin{proof}
As the first Frobenius kernel
$\GL_{V(1)}\Def\Ker(F^{(1)}:\GL_V\lto\GL_V^{(1)})$ is a normal subgroup scheme
of $\GL_{V}$, \emph{i.e.} for any $k$-scheme $T$, the subgroup
$\GL_{V(1)}(T)\subseteq \GL_V(T)$ is normal. Thus given any $T$-point
$\lambda\colon T\to\GL_V$, we get a conjugation map 
$$c_\lambda\colon
T\times\GL_{V}\xrightarrow{~\simeq~}T\times\GL_V$$
$${\hspace{60pt}x\hspace{20pt}}\mapsto {\hspace{20pt}\lambda^{-1}x\lambda\hspace{20pt}}$$
which preserves the subgroup $\GL_{V(1)}\times T$. We will apply this to
$T=\G_{a,k}$ and 
$\lambda = \delta$.

As $\Gamma$ is of height one, we have $\Gamma\subseteq\GL_{V(1)}$. We have to
show that the composition
\begin{equation*}
    \xi:\G_{a,k}\times \Gamma\subseteq
    \G_{a,k}\times\GL_{V(1)}\xrightarrow{~c_{\delta}~}\G_{a,k}\times\GL_{V(1)}\xrightarrow{\textup{pr}_2}\GL_{V(1)} 
\end{equation*}
has image in $\Gamma$. Let $K=k(t)$ be the function field of $\G_{a,k}$. It is enough to show that $\xi(\Spec K\times_k\Gamma)\subseteq\Gamma$, \emph{i.e.}\ the induced group homomorphism
\begin{equation*}
\psi:\Gamma_K\lto\GL_{V(1),K}
\end{equation*} 
factors through in $\Gamma_K$. However, as $\Gamma_K$ and $\GL_{V(1),K}$ are of height one, it suffices to show that the associated morphism of restricted Lie algebras
\begin{equation*}
d\psi:\Lie(\Gamma)_K\lto\fgl_{V,K}
\end{equation*} 
factors through $\Lie(\Gamma)_K$. Notice that $d\psi$ is the composition of the natural inclusion $\Lie(\Gamma)_K\hookrightarrow\fgl_{V,K}$ together with the differential $dc:\fgl_{V,K}\xrightarrow{~\simeq~}\fgl_{V,K}$ of the conjugacy action by $\delta$, which is just
\begin{equation*}
A\longmapsto \delta(t)^{-1}A\delta(t). 
\end{equation*}
However, by our assumption, we have $\delta(t)=\exp(t D)=1+tD$, and hence $\delta(t)^{-1}=\delta(-t)=1-tD$. Thus, by Lemma \ref{lem:GLA vs FLA}(3), we have $\delta(t)^{-1}A\delta(t)\in\Lie(\Gamma)_K$ for any $A\in\Lie(\Gamma)_K$. This completes the proof. 
\end{proof}

\begin{proof}[Proof of Theorem \ref{thm:AC FLA}]
Without loss of generality, we may assume that $L$ is a Lie subalgebra of $\fgl_V$ and $\rho$ is the natural inclusion $L\subset\fgl_V$. 
By the condition (III), $U^+$ admits a basis $D_1,\dots,D_{n}$ such that $D_i^2=0$ for any $1\le i\le n$. Thanks to Theorem \ref{thm:f_X}, for any $1\le i\le n$, we get an injective homomorphism of group schemes
\begin{equation*}
\delta_i\Def f_{D_{i}}:W_1=\G_{a,k}\lto\GL_V
\end{equation*}
(cf.\ (\ref{eq:f_X})) which recovers the natural inclusion of Lie algebras, \emph{i.e.}\
\begin{equation*}
d\delta_i=df_{D_{i}}:k\simeq k\cdot D_i\lto\fgl_V~;~x\longmapsto x D_i,
\end{equation*}
(cf.\ (\ref{eq:df_X})). We define the morphism $\delta:\G_{a,k}^n\lto\GL_V$ of
$k$-schemes to be the product of the exponential maps $\delta_i~(1\le i\le n)$, \emph{i.e.}
\begin{equation*}
\delta:\G_{a,k}^n\lto\GL_V\,;\,(t_1,\dots,t_n)\longmapsto\delta_1(t_1)\cdots\delta_n(t_n). 
\end{equation*}

On the other hand, by the condition (II), $U^-$ admits a basis $\partial_1,\dots,\partial_{m}$ satisfying $\partial_1^{l}=\cdots=\partial_m^{l}=0$ for $l\gg 0$. For each $1\le i\le m$, let $n_i$ denote the positive integer satisfying $\partial_{i}^{p^{n_i}}=0$ and $\partial_i^{p^{n_i-1}}\neq 0$. 
Then, by Theorem \ref{thm:f_X}, we get a homomorphism~(cf.\ Remark \ref{rem:f_X})
\begin{equation*}
\omega\Def f_{\partial_1}\cdots f_{\partial_{m}}\colon W_{\bfn}\Def W_{n_1}\times\cdots \times W_{n_m}\lto\GL_V
\end{equation*} 
inducing the natural inclusions of Lie algebras
\[
d\omega=\sum_{i=1}^m df_{\partial_i}:
k^{n_1+\cdots+n_m}\simeq U^-_{[p],\rho}\lto \fgl_V~;~(x^i_{0},\dots,x^i_{n_i-1})_{i=1}^m\longmapsto\sum_{i=1}^m\sum_{j=0}^{n_i-1}x^i_j\partial_{i}^{p^j}
\]
(cf.\ (\ref{eq:df_X})).

Let us consider the morphism of affine $k$-schemes
\begin{equation*}
\phi:W_{\bfn}\times\G_{a,k}^{n}\lto\GL_{V}, \ \ \phi(t,s)=\omega(t)\delta(s)
\end{equation*}
Let $\Gamma$ be the finite group scheme of height one associated with the restricted Lie algebra $L_{[p],\rho}$. 
To the natural inclusion $L_{[p],\rho}\hookrightarrow\fgl_{V}$, one can uniquely associate the injective homomorphism of finite group schemes over $k$,
\begin{equation*}
\Gamma\hookrightarrow\GL_{V(1)}.
\end{equation*}
In the following, we consider $\Gamma$ as a subgroup scheme by this injective homomorphism.
By our construction, the restrictions
\[
\omega\colon W_{\bfn(1)} \lto \GL_{V(1)},\ \ \delta_i\colon \alpha_{p,k} \lto \GL_{V(1)}\,(1\le i\le n)
\]
induce the natural inclusions $U^-_{[p],\rho} \lto \fgl_V$ and $k\cdot D_i \lto
\fgl_V$ on Lie algebras and thus factors through $L_{[p],\rho}$. It follows
that $\im(\omega),\im(\delta_1),\dots,\im(\delta_n)\subseteq \Gamma$. 
By condition (I) together with Lemma \ref{lem:GLA vs FLA} (2), we can conclude that

\begin{equation}
    L_{[p],\rho} = \langle U_{[p],\rho}^-, k\cdot D_1,\cdots,
    k\cdot D_n\rangle_{p-\Lie} \subseteq \fgl_V
\end{equation}

By \cite[Chapter II, \S 7, n$^\textup{o}$4, 4.3]{dg70}, the functor $\Lie(-)$
induces an order preserving bijection between the restricted Lie subalgebras
of $\fgl_V$ and the closed subgroup schemes of
$\GL_{V(1)}$. Thus the smallest restricted Lie subalgebra containing
$U_{[p],\rho}^-, k\cdot D_1,\cdots,
    k\cdot D_n$ corresponds to the smallest closed subgroup
    scheme containing
    $\im(\omega),\im(\delta_1),\dots,\im(\delta_n)$, namely we have 

\begin{equation}\label{eq:Gamma generators}
\Gamma=\langle
\im(\omega),\im(\delta_1),\dots,\im(\delta_n)\rangle\subseteq\GL_{V(1)}.
\end{equation} 

Now consider the $\GL_{V(1)}$-torsor $P\lto W_{\bfn}^{(1)}\times\G^{n(1)}_{a,k}$ given by the Cartesian diagram
\begin{equation*}
\begin{xy}
\xymatrix{\ar@{}[rd]|{\square}
P\ar[r]\ar[d]&\GL_{V}\ar[d]^{F^{(1)}}\\
W_{\bfn}^{(1)}\times\G^{n(1)}_{a,k}\ar[r]_{~~~~\phi^{(1)}}&\GL_{V}^{(1)}.
}
\end{xy}
\end{equation*}
Let $G\subset\GL_{V(1)}$ be the image of the associated  homomorphism $\pi^{\loc}\bigl(W_{\bfn}^{(1)}\times\G^{n(1)}_{a,k}\bigl)\lto \GL_{V(1)}$. We are going to show that $\Gamma=G$, which, by Lemma \ref{lem:bertini},  completes the proof of the proposition. 

We start by showing that  $\Gamma\subseteq G$. By (\ref{eq:Gamma generators}), it is enough to show that 
\begin{equation*}
    \im(\omega),\im(\delta_1),\dots,\im(\delta_n)\subseteq G.
\end{equation*} 
For that, it suffices to show that for any factor $\iota\colon
W_{n_i}^{(1)} \lto \GL_V^{(1)}$ (resp. 
$\iota\colon W_1^{(1)}=\G_{a,k}^{(1)}\to \GL_V^{(1)}$) the image of
$\pi^{\loc}(W_{n_i}^{(1)})\lto \GL_{V(1)}$ is $W_{n_i(1)}$ (resp. the image of $\pi^{\loc}(W_{1}^{(1)})=\pi^{\loc}(\G_{a,k}^{(1)})\lto \GL_{V(1)}$ is
$W_{1(1)}={\G_{a,k}}_{(1)}$).  The map $\iota$ is a group homomorphism and induces an injective map on Lie algebras. Let $l$ denote 1 or $n_i$ for $1\leq
i\leq m$.  Then there is a commutative diagram 
\begin{equation*}
\begin{xy}
\xymatrix{
W_{l}\ar[r]^{~~\iota~~~}\ar[d]_{F^{(1)}}&\GL_V\ar[d]^{F^{(1)}}\\
W_{l}^{(1)}\ar[r]_{\iota^{(1)}~}&\GL_{V}^{(1)},
}
\end{xy}
\end{equation*}
where the upper horizontal arrow is $W_{l(1)}$-equivariant. 
The $W_{l(1)}$-torsor $W_l\to
W_l^{(1)}$ induces a factorization $\pi^{\loc}(W_l^{(1)})\xrightarrow \alpha W_{l(1)}\subseteq \GL_{V(1)}$. If $H$ is the image of $\alpha$ then there exists an $H$-torsor $P\to W_l^{(1)}$ with an $H$-equivariant map $P\to W_l$. This map is a surjective closed immersion because $W_{l(1)}$ is local, hence an isomorphism because $W_l$ is reduced. Thus $\alpha$ is surjective and therefore $\Gamma\subseteq G$.


Let us prove now that $G\subseteq \Gamma$. It suffices to show that the morphism $\phi^{(1)}:W^{(1)}_{\bfn}\times\G_{a,k}^{n(1)}\lto\GL_{V}^{(1)}$ lifts to the middle quotient $\GL_{V}/\Gamma$, 
\begin{equation*}
\begin{xy}
\xymatrix{
&\GL_{V}\ar[d]\ar@/^14mm/[dd]^{F^{(1)}}\\
&\GL_{V}/\Gamma\ar[d]\\
W^{(1)}_{\bfn}\times\G_{a,k}^{n(1)}\ar[r]_{~~~~~\phi^{(1)}}\ar@{-->}[ru]&\GL_{V}^{(1)}.
}
\end{xy}
\end{equation*}
Note that the following diagram is commutative, 
\begin{equation*}
\begin{xy}
\xymatrix{
W_{\bfn}\times\G_{a,k}^{n}\ar[r]^{~~\phi}\ar[d]_{F^{(1)}}&\GL_V\ar[d]^{F^{(1)}}\\
W_{\bfn}^{(1)}\times\G_{a,k}^{n(1)}\ar[r]_{~~~~\phi^{(1)}}&\GL_{V}^{(1)}.
}
\end{xy}
\end{equation*}
Therefore, to get a desired lift of $\phi^{(1)}$, it suffices to check that the composition 
\begin{equation*}
W_{\bfn}\times\G_{a,k}^{n}\xrightarrow{~\phi~}\GL_V\lto\GL_{V}/\Gamma
\end{equation*}
factors through the quotient $W_{\bfn}^{(1)}\times\G_{a,k}^{n(1)}=(W_{\bfn}\times\G_{a,k}^{n})/(W_{\bfn(1)}\times\alpha_{p,k}^{n})$. 

For $(s,t)\in W_{\bfn}\times\G^{n}_{a,k}$ and $(u,v)\in W_{\bfn(1)}\times\alpha_p^{n}$, we have
\[
\phi(s+u,t+v)=\omega(s)\omega(u)\delta(t+v)=\phi(s,t)\zeta(t,u,v)
\]
with
\[
\zeta(t,u,v)=\delta(t)^{-1}\omega(u)\delta(t+v)\in \GL_{V(1)}
\]
Here we are using the functorial point of view, so that $s,u,t,v$ should be
thought of as sections of the corresponding functor over some fixed scheme.
\todo[inline]{Was this the problem of the referee?}

We need to prove that $\zeta(t,u,v)\in \Gamma$. First let us handle the case when $p>2$, in which case we have
\begin{equation*}
\begin{aligned}
\zeta(t,u,v)
=&\bigl(\delta(t)^{-1}\omega(u)\delta(t)\bigl) \cdot\bigl(\delta(t)^{-1}\delta(t+v)\bigl)\\
=&\bigl(\delta_n(t_n)^{-1}\cdots\delta_1(t_1)^{-1}\omega(u)\delta_1(t_1)\cdots\delta_n(t_n)\bigl)\cdot\\
&~\prod_{i=1}^n\bigl(\delta_n(t_n)^{-1}\cdots\delta_{i+1}(t_{i+1})^{-1}\delta_i(v_i)\delta_{i+1}(t_{i+1})\cdots\delta_n(t_n)\bigl).
\end{aligned}
\end{equation*} 
Hence, by applying Lemma \ref{lem:AC FLA} iteratively, one can conclude that $\zeta(t,u,v)\in\Gamma$. This completes the proof in the case where $p>2$.

Let us assume that $p=2$. Then by the condition (III), $\delta_i$ and $\delta_j$ commute with each other. Hence, we have
\begin{equation*}
\zeta(t,u,v)=\delta(t)^{-1}\omega(u)\delta(t)\delta(v).
\end{equation*}
Since $\delta(\alpha_{p,k}^n)\subseteq \Gamma$, it is enough to show that the map
\[
\xi\colon \G_{a,k}^n\times W_{\bfn(1)} \lto \GL_{V(1)}, \ \ \xi(t,u)=\delta(t)^{-1}\omega(u)\delta(t)
\]
has image in $\Gamma$. Let $K$ be the fraction field of $\G_{a,k}^n$. It is enough to show that  $\xi(\Spec K \times_k W_{\bfn(1)})\subseteq \Gamma$ and, therefore, that the group homomorphism
\[
\psi\colon W_{\bfn(1)} \lto\GL_{V(1),K}
\]
factors through $\Gamma_K$. However, as it is a homomorphism of finite group schemes of height one, it suffices to show that the corresponding morphism
\begin{equation*}
d\psi :\Lie(W_{\bfn})\otimes_k K\lto\fgl_{V}\otimes_k K
\end{equation*} 
of the restricted Lie algebras factors through $L_{[p],\rho}\otimes K$.
By construction the composition
\[
\Spec K \lto \G_{a,k}^n \lto \GL_V
\]
is given by $\delta(B)=\prod_{i=1}^n\exp(b_iD_i)\in \GL_V(K)$ for some $B=\sum_{i=1}^nb_i D_i\in L_h\otimes_kK $. 
Since the map $\psi\colon W_{\bfn(1)} \lto\GL_{V(1),K}$ is obtained as composition of $\omega\colon W_{\bfn(1)} \lto\GL_{V(1),K}$ and the conjugation by $\delta(B)$, $c\colon \GL_{V(1),K} \lto\GL_{V(1),K}$ the corresponding map on Lie algebras in the composition of the inclusion $d\omega\otimes K\colon \Lie(W_{\bfn})\otimes K=U^-_{[p],\rho}\otimes K\lto \fgl_V \otimes K$ and the differential $\fgl_V \otimes K\lto \fgl_V \otimes K$ of $c$, which is just
\[
A\longmapsto \delta(B)^{-1}A\delta(B).
\]
As we have chosen the basis $D_1,\dots,D_n$ so that $D_i^2=0$, we have 
\begin{equation*}
\delta(B)=\prod_{i=1}^n(1+b_iD_i). 
\end{equation*}
Again by the condition in (III), we have $B^2=0$, hence 
\begin{equation*}
 \delta(B)=\exp(B)=1+B. 
\end{equation*} 
Thus, for $\nabla\in U^-$, we have to show that
\[
 \delta(B)^{-1}\nabla^{p^j}\delta(B) =(1-B)\nabla^{p^j}(1+B)\in L_{[p],\rho}\otimes K.
\]
Therefore, by the condition (III) together with the first assertion in Lemma \ref{lem:GLA vs FLA}(3), we get
\begin{equation*}
\begin{aligned}
 \delta(B)^{-1}\nabla^{p^j}\delta(B)
&= (I-B)\nabla^{p^j}(I+B)\\
&=\nabla^{p^j} - [B,\nabla^{p^j}] + B\nabla^{p^j} B\\
&=\nabla^{p^j}-[B,\nabla^{p^j}]+\sum_{i=1}^nb_i^2 D_i\nabla^{p^j}D_i+\sum_{i<i'}b_ib_{i'}(D_i\nabla^{p^j}D_{i'}+D_{i'}\nabla^{p^j}D_i)\in L_{[p],\rho}\otimes K,
\end{aligned}
\end{equation*}
so that the claim follows. This completes the proof.    
\end{proof}

For $\Z$-graded simple Lie algebras, we get the following consequence.

\begin{cor}\label{cor:AC GLA}
Let $L$ be a center-free $\Z$-graded Lie algebra over a perfect field $k$ of characteristic $p>0$ whose grading is given by 
\begin{equation*}
L=\bigoplus _{i=-s}^{h}L_i~~\text{with}~~s=s(L),~~h= h(L), 
\end{equation*}
and let $U\subseteq L_{\le -1}$ be a subspace. Let $L_{[p]}$ be the $p$-envelope of $L$ in the adjoint representation $\ad:L\hookrightarrow\Der(L)$. 
Suppose that the following conditions are satisfied.
\begin{enumerate}
\renewcommand{\labelenumi}{{\rm (G\,\Roman{enumi})}}
\item $[U,U]=0$.
\item $L=\langle U,L_{h}\rangle_{\Lie}$. 
\item If $p>2$, then we assume $s<h$. If $p=2$, then we further assume there exists an integer $l>0$ such that $\ad(U)^{2^{l}}=0$ and $(2^{l-1}+1)s<h$.
\end{enumerate}
Then $\Gamma\coloneqq
\mathfrak{G}(L_{[p]})\in \pi^\loc_A(\A^1_k)$. 
\end{cor}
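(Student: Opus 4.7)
The plan is to deduce the corollary from Theorem \ref{thm:AC FLA} applied to the faithful adjoint representation $\rho = \ad \colon L \hookrightarrow \Der(L) \subseteq \fgl_L$ (injective because $L$ is center-free), with $V = L$, $U^- = U$, and $U^+ = L_h$. Under this choice the $p$-envelope $L_{[p], \rho}$ of Theorem \ref{thm:AC FLA} coincides with the $p$-envelope $L_{[p]}$ of Definition \ref{def:p-env}, so its conclusion becomes the desired $\mathfrak{G}(L_{[p]}) \in \pi^\loc_A(\A^1_k)$.

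Condition (I) of Theorem \ref{thm:AC FLA} is exactly (GII), and the bracket part of (II) is (GI). For the nilpotency requirement in (II), every $\partial \in L_{\le -1}$ satisfies $\ad(\partial)(L_m) \subseteq L_{\le m-1}$, so $\ad(\partial)^{h+s+1}$ kills $L$, and any basis of $U$ works. Pick any basis $D_1, \ldots, D_n$ of $L_h$: since $\ad(D_i)^2$ sends $L_m$ into $L_{m+2h}$, the inequality $s < h$---which holds under (GIII) in either parity (using $2^{l-1}+1 \ge 2$ when $p=2$)---gives $\ad(D_i)^2 = 0$, and the same degree argument yields $\ad(D_i)\ad(D_j) = 0$ for $i \ne j$ when $p = 2$.

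The main point, where the strengthened inequality in (GIII) for $p = 2$ is genuinely used, is the verification that $\ad(D_i) \cdot U^-_{[p], \ad} \cdot \ad(D_i) \subseteq L_{[p], \ad}$. Because $[U, U] = 0$, the operators $\ad(\partial)$ for $\partial \in U$ mutually commute in $\fgl_L$; combined with the perfectness of $k$ and the additivity of Frobenius on commuting elements, this shows that each set $\ad(U)^{[2^j]} = \{\ad(\partial)^{2^j} : \partial \in U\}$ is a $k$-subspace, and that $U^-_{[p], \ad} = \sum_{j=0}^{l-1} \ad(U)^{[2^j]}$ (the sum truncating because $\ad(U)^{2^l} = 0$). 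By $k$-linearity it therefore suffices to prove $\ad(D_i)\ad(\partial)^{2^j}\ad(D_i) = 0$ for each $\partial \in U$ and $0 \le j \le l-1$. Decomposing $\partial = \sum_{r=1}^{s}\partial_{-r}$ homogeneously, every monomial appearing in $\ad(\partial)^{2^j}$ decreases degree by some value in $[2^j, 2^j s]$, so $\ad(D_i)\ad(\partial)^{2^j}\ad(D_i)$ carries $L_m$ into a sum of graded pieces $L_{m + 2h - r}$ with $r \le 2^j s$; the smallest possible such degree, taken over $m \ge -s$, is $2h - (2^j+1)s$, which by (GIII) exceeds $h$ for all $j \le l - 1$. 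Hence the image lies above the top degree and vanishes.

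With all three hypotheses of Theorem \ref{thm:AC FLA} thus verified, the corollary follows. The principal obstacle is the final step, where $U^-_{[p], \ad}$ must be described explicitly as a sum of Frobenius twists of $\ad(U)$ and the numerical hypothesis $(2^{l-1}+1)s < h$ translated into the degree bound that forces the triple composite to vanish on the finite-dimensional graded space $L$.
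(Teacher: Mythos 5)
Your proposal is correct and follows essentially the same route as the paper's proof: apply Theorem \ref{thm:AC FLA} to $\rho=\ad$ with $U^-=U$, $U^+=L_h$, and verify conditions (I)--(III) by degree counting in the grading, with the $p=2$ case reduced to the vanishing of $\ad(L_h)\ad(U)^{2^j}\ad(L_h)$ via the inequality $(2^{l-1}+1)s<h$. The only difference is that you spell out the identification $U^-_{[p],\ad}=\sum_{j}\ad(U)^{[p^j]}$ (using perfectness of $k$ and commutativity), which the paper simply asserts.
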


\begin{proof}
By condition (G\,I), $U$ is an abelian Lie subalgebra of $L$. The condition
(G\,III) particularly says $h>0$, and hence $[L_{h},L_h]\subseteq L_{2h}=0$.
Therefore, $L_h$ is also an abelian Lie subalgebra. If $s=0$, then $U=0$, in
which case the condition (G\,II) is not satisfied. Thus we have $s>0$. If one
takes $l>0$ so that $h+s<l$, then we have
\begin{equation*}
\ad(U)^{l}(L)\subseteq\bigoplus_{i=-s}^h\bigoplus_{k=-s}^{-1}L_{i+kl}=0,
\end{equation*}
hence the conditions (I), (II) in Theorem \ref{thm:AC FLA} are satisfied for
$U^+\Def L_h$, $U^-\Def U$ with respect to the adjoint representation
$\ad:L\lto\Der(L)\subset\fgl_L$. Therefore, to prove the corollary, it suffices
to show that the condition (III) in Theorem \ref{thm:AC FLA} is fulfilled as
well. By the condition (G\,III), we have $h>s$. Then we have
\begin{equation*}
\ad(L_h)^2(L)\subseteq\bigoplus_{i=-s}^hL_{h+(h+i)}=0.
\end{equation*}
Hence the condition (III) is fulfilled in the case when $p>2$. Let us assume that $p=2$. Again by the condition (G\,III), for any $0\le j\le l-1$, we have
\begin{equation*}
\begin{aligned}
\ad(L_h)\ad(U)^{2^j}\ad(L_h)(L)
&\subseteq\ad(L_h)\ad(U)^{2^j}\Bigl(\bigoplus_{-s\le i\le 0}L_{h+i}\Bigl)\\
&\subseteq\bigoplus_{-s\le i\le 0}\ad(L_h)\Bigl(\bigoplus_{-s\le k\le -1} L_{h+i+2^{j}k}\Bigl)\\
&\subseteq\bigoplus_{-s\le i\le 0}\bigoplus_{-s\le k\le -1}L_{h+(h+i+2^jk)}=0.
\end{aligned}
\end{equation*}
As $U^-_{[p]}=\sum_{0\le j\le l-1}U^{p^j}$, this implies that the condition (III) in Theorem \ref{thm:AC FLA} is also satisfied. Thus, the assertion is a consequence of Theorem \ref{thm:AC FLA}. This completes the proof. 
\end{proof}

\begin{rem}\label{rem:AC Cartan}
Let $\Gamma\subset\GL_{V}$ be a finite local subgroup scheme of height one of
the general linear algebraic group associated with a finite dimensional
$k$-vector space $V$. Suppose that $\Gamma$ can be generated by finite simple
unipotent subgroup schemes $H_i\simeq\alpha_p~(i\in I)$, \emph{i.e.}\
$\Gamma=\langle H_i\,|\, i\in I\rangle$. Then, as discussed in the proof of
Proposition \ref{thm:AC FLA}, if $\delta_i:\G_a\lto\GL_V~(i\in I)$ denote the exponential maps associated with the homomorphisms $\fg_a\simeq\Lie(H_i)\subset\Lie(\Gamma)\lto\fgl_V$ of restricted Lie algebras,  then the pullback of the relative Frobenius $F^{(1)}:\GL_V\lto\GL_V^{(1)}$ along the morphism
\begin{equation*}
\A_k^{I}=\prod_{i\in I}\A^1_k\xrightarrow{~\delta^{(1)}=\prod_{i\in I}\delta_i^{(1)}}\GL_V^{(1)}
\end{equation*}
defines a $\GL_{V(1)}$-torsor $P\lto \A^I_k$. One might expect that the resulting torsor $P\lto\A^I_k$ would be always reduced to a $\Gamma$-torsor. However, this is not necessarily true~(cf.~Example \ref{ex:Rumynin}). Therefore,  one really needs to take care about the choice of generators of $\Gamma$ to control the image of the associated homomorphism $\pi^{\loc}(\A^I_k)\lto\GL_{V(1)}$. This problem stems from the failure of the formula 
\begin{equation*}
\exp(B) A\exp(-B)=\exp(\ad B)A
\end{equation*}
in positive characteristic~(cf.\ \cite[\S4]{mat05}). See also  Example \ref{ex:counter-example in char p=3}, in which we will see that the condition $D^2=0$ in Lemma \ref{lem:AC FLA} cannot be removed.  
\end{rem}

\begin{ex}(Proposed by Dmitriy Rumynin)\label{ex:Rumynin}
Let $k$ be an algebraically closed field of characteristic $p=2$. Let $L_{\C}\Def\fsl_2(\C)$ with 
\begin{equation*}
e=
\begin{pmatrix}
0&1\\
0&0
\end{pmatrix},
f=
\begin{pmatrix}
0&0\\
1&0
\end{pmatrix},
h=
\begin{pmatrix}
1&0\\
0&-1
\end{pmatrix} 
\end{equation*}
the standard basis $L_{\C}=\C e\oplus \C h\oplus \C f$. Note that
\begin{equation*}
[e,f]=h,[h,e]=2e,[h,f]=-2f.
\end{equation*}
Then by taking reductions modulo $p=2$, we get the following three non-isomorphic restricted Lie algebras of dimension $3$. 
\begin{enumerate}
\renewcommand{\labelenumi}{(\arabic{enumi})}
\item $\fsl_{2,k}\simeq(\Z e\oplus\Z f\oplus\Z h)\otimes_{\Z} k$.
\item $\mathfrak{pgl}_{2,k}\simeq(\Z e\oplus\Z f\oplus\Z\frac{1}{2} h)\otimes_{\Z} k$.
\item $L\Def (\Z e\oplus \Z\frac{1}{2} f\oplus
    \Z\frac{1}{2} h)\otimes_{\Z} k$.
\end{enumerate}
The first two restricted Lie algebras are not simple. The last one $L$ is simple. Set
\begin{equation*}
\tilde{e}\Def e\otimes 1,~\tilde{f}\Def \frac{1}{2}f\otimes 1,~\tilde{h}\Def \frac{1}{2}h\otimes 1.
\end{equation*}
Then $L=k\tilde{e}\oplus k\tilde{f}\oplus k\tilde{h}$ with $\tilde{e}^{[2]}=\tilde{f}^{[2]}=0$ and $\tilde{h}^{[2]}=\widetilde{h}$. Moreover, we have
\begin{equation*}
[\tilde{e},\tilde{f}]=\tilde{h},[\tilde{h},\tilde{e}]=\tilde{e},[\tilde{h},\tilde{f}]=\tilde{f}.
\end{equation*}
Hence, the $2$-nilpotent elements $\tilde{e}$ and $\tilde{f}$ generate the Lie algebra $L$. 

Set $\Gamma\coloneqq \mathfrak{G}(L)$. Then the adjoint representation
\begin{equation*}
\ad\colon L\lto\fgl_L 
\end{equation*} 
corresponds to a homomorphism $\Gamma\subset \GL_L$. Let
$\delta_i\colon \G_{a,k}\lto \GL_L$ ($i=1,2$) be the two
homomorphisms corresponding to the inclusions $k\,\tilde{e} \xrightarrow{\ad}\fgl_L$
and $k\,\tilde{f} \xrightarrow{\ad} \fgl_L$ respectively. Consider the map
$\lambda\colon\G_{a,k}\to \GL_L$ associated with the element 
\[
   (1+\ad(\tilde{f}))\ad(\tilde{e})(1+\ad(\tilde{f}))\in \fgl_L
\]
For any $k$-scheme $T$ and any $u\in \G_{a,k}(T)$, we have
$$\lambda(u)=(1+\ad(\tilde{f}))(1+\ad(\tilde{e})u)(1+\ad(\tilde{f}))=\delta_2(1)\delta_1(u)\delta_2(1)$$

If the $\GL_{V(1)}$-torsor $P\lto \A_k^2$, which is defined as
the pullback of the relative Frobenius $F^{(1)}:\GL_V\lto\GL_V^{(1)}$ along the  map 
$$\G_{a,k}\times\G_{a,k}\xrightarrow{~\delta_1\cdot\delta_2~}
\GL_V^{(1)},$$
could be reduced to a $\Gamma$-torsor, then just as was shown in
Theorem \ref{thm:AC FLA}, one should have
$$\delta_1(s+u)\delta_2(t+v) = \delta_1(s)\delta_2(t)\cdot
\gamma$$
for all $s,t\in\G_{a,k}(T), u,v\in\alpha_{p,k}(T)$ and for some
$\gamma\in\Gamma(T)$ (depending on $s,t,u,v$). This is true if
and only if $\delta_2(-t)\delta_1(u)\delta_2(t+v)\in\Gamma(T)$,
or equivalently $\delta_2(t)\delta_1(u)\delta_2(t)\in\Gamma(T)$
(note that $\delta_2(t)=\delta_2(-t)$ and $\delta_2(v)\in
\Gamma(T)$). Thus the restriction of $\lambda$ to the subgroup
$\alpha_{p,k}\subset \G_{a,k}$ takes values in
$\Gamma\subset\GL_L$. Now considering the map $\Lie(\lambda)$ we get
\[
    (1+\ad(\tilde{f}))\ad(\tilde{e})(1+\ad(\tilde{f}))\in
    \ad(L)\subset \fgl_L
\]
\todo[inline]{Check the argument above.}
However, this
is not the case. Indeed, one can check that
\begin{equation*}
(1+\ad(\tilde{f}))\ad(\tilde{e})(1+\ad(\tilde{f}))=\ad(\tilde{e})+\ad(\tilde{h})+\ad(\tilde{f})\ad(\tilde{h})\not\in \ad(L), 
\end{equation*}
or equivalently
\begin{equation*}
\ad(\tilde{f})\ad(\tilde{h})\not\in\ad(L). 
\end{equation*}
Indeed, if there exist $c_1,c_2,c_3\in k$ such that
\begin{equation*}
\ad(\tilde{f})\ad(\tilde{h})=c_1\ad(\tilde{e})+c_2\ad(\tilde{f})+c_3\ad(\tilde{h}),  
\end{equation*}
then we have
\begin{equation*}
0=\ad(\tilde{f})\ad(\tilde{h})(\tilde{f})=c_1\ad(\tilde{e})(\tilde{f})+c_2\ad(\tilde{f})(\tilde{f})+c_3\ad(\tilde{h})(\tilde{f})=c_1\tilde{h}+c_3\tilde{f},
\end{equation*}
hence $c_1=c_3=0$. Moreover, we have $0=\ad(\tilde{f})\ad(\tilde{h})(\tilde{h})=c_2\ad(\tilde{f})(\tilde{h})=c_2\tilde{f}$, 
hence $c_2=0$. However, this contradicts the fact that
\begin{equation*}
\ad(\tilde{f})\ad(\tilde{h})(\tilde{e})=\tilde{h}\neq 0.
\end{equation*}
Therefore, we conclude that $\ad(\tilde{f})\ad(\tilde{h})\not\in\ad(L)$. 
\end{ex}


\section{The non-classical simple Lie algebras in characteristic $p>3$}\label{sec:Cartan}

Let $\N$ denote the set of non-negative integers, \emph{i.e.}\ $\N=\Z_{\ge
0}=\{n\in\Z~|~n\ge 0\}$, which we consider as a commutative ordered monoid in
the usual way. For any positive integer $m>0$, $\N^m$ denotes the set of
$m$-tuples of non-negative integers, which we consider as a commutative
partially ordered monoid by the component-wise addition and by the rule that
for any $m$-tuples $\alpha,\beta\in\N^{m}$, the inequality $\alpha\le\beta$
holds if and only if $\alpha_i\le\beta_i$ for each $1\le i\le m$. For any element $\alpha=(\alpha_1,\dots,\alpha_m)\in\N^m$, we set
\begin{equation}\label{eq:|alpha|}
|\alpha|\Def\alpha_1+\cdots+\alpha_m\in\N. 
\end{equation}
For each $1\le i\le m$, we define an element $\epsilon_i\in\N^m$ to be 
\begin{equation}\label{eq:epsilon_i}
\epsilon_i=(0,\dots,0,\overbrace{1}^{i},0,\dots,0),
\end{equation}
where $1$ lies only in the $i$-th component. For any two elements $\alpha,\beta\in\N^m$, we set
\begin{equation*}
\binom{\alpha}{\beta}\Def\prod_{i=1}^m\binom{\alpha_i}{\beta_i}.
\end{equation*}
Consider the polynomial ring $\Q[X_1,\dots,X_m]$. For each $\alpha\in\N^m$, we set
\begin{equation*}
X^{(\alpha)}\Def\prod_{i=1}^mX_i^{(\alpha_i)}~~\text{with $X_i^{(\alpha_i)}=\frac{X_i^{\alpha_i}}{\alpha_i!}$}
\end{equation*}
in $\Q[X_1,\dots,X_m]$. Note that for any elements $\alpha,\beta\in\N^m$, we have
\begin{equation*}
X^{(\alpha)}X^{(\beta)}=\binom{\alpha+\beta}{\alpha}X^{(\alpha+\beta)}. 
\end{equation*}
Thus the $\Z$-span $\sum_{\alpha\in\N^m}\Z X^{(\alpha)}$ of $\{X^{(\alpha)}~|~\alpha\in\N^m\}$ in $\Q[X_1,\dots,X_m]$ gives a commutative $\Z$-algebra, which we denote by
\begin{equation}\label{eq:PD poly}
A(m)_{\Z}\Def\sum_{\alpha\in\N^m}\Z X^{(\alpha)}\subset\Q[X_1,\dots,X_m]. 
\end{equation}

\subsection{The Witt algebras}\label{subsec:witt}

In this subsection, we recall the definition of the \textit{Witt algebras} following \cite[\S4.2]{Strade}, which provide us with typical examples of non-classical type simple Lie algebras.  
Let $k$ be a field of characteristic $p>0$. For any integer $m\ge 1$, we define the $k$-algebra $A(m)$ to be
\begin{equation*}
A(m)\Def k\otimes_{\Z}A(m)_{\Z}
\end{equation*}
(cf.\ (\ref{eq:PD poly})). By abuse of notation, we still write $X^{(\alpha)}\in A(m)$ for the image of $X^{(\alpha)}\in A(m)_{\Z}$ in $A(m)$. 

For any $m$-tuple $\bfn\in\Z_{>0}^m$ of positive integers, we define the finite dimensional $k$-algebra $A(m;\bfn)$ to be the subalgebra of $A(m)$,
\begin{equation*}
A(m;\bfn)\Def \sum_{0\le\alpha\le\bfp^{\bfn}-\unit}k\cdot X^{(\alpha)},
\end{equation*}
where $\bfp^{\bfn}-\unit=(p^{n_1}-1,\dots,p^{n_m}-1)$. 
Then the algebra $A(m;\bfn)$ has a natural grading of $k$-vector subspaces
\begin{equation}\label{eq:gr A(m;n)}
A(m;\bfn)=\bigoplus_{i\ge 0}A(m;\bfn)_i\quad\text{with}\quad A(m;\bfn)_i=\sum_{|\alpha|=i}k\cdot X^{(\alpha)}.
\end{equation}
By definition, 
\begin{equation}\label{eq:tau(n)}
A(m;\bfn)_i=0\quad\text{for}\quad i>|\bfp^{\bfn}-\unit|=\sum_{i=1}^mp^{n_i}-m.
\end{equation}
and $A(m;\bfn)_{|\bfp^{\bfn}-\unit|}=k\cdot X^{(\bfp^{\bfn}-\unit)}$ is of dimension one

Now we define the \textit{$m$-th Witt algebra $W(m;\bfn)$ of weight $\bfn$} as the Lie subalgebra 
\begin{equation}\label{eq:def W}
W(m;\bfn)\Def\sum_{i=1}^mA(m;\bfn)\partial_i\subseteq\Der_k(A(m;\bfn)), 
\end{equation}
where the derivations $\partial_i~(1\le i\le m)$ are defined by
\begin{equation*}
\partial_i(X^{(\alpha)})=X^{(\alpha-\epsilon_i)},
\end{equation*} 
which give a  basis of $W(m;\bfn)$ over $A(m;\bfn)$, \emph{i.e.}
\begin{equation*}
W(m;\bfn)=\bigoplus_{i=1}^m A(m;\bfn)\partial_i.
\end{equation*}
The grading (\ref{eq:gr A(m;n)}) of $A(m;\bfn)$ gives the $\Z$-graded structure on the Lie algebra $W(m;\bfn)$~(cf.\ Definition \ref{def:GRLA}) as follows,
\begin{equation}\label{eq:witt gr}
W(m;\bfn)=\bigoplus_{i=-1}^{|\bfp^{\bfn}-\unit|-1}W(m;\bfn)_i\quad\text{with}\quad W(m;\bfn)_i=\sum_{j=1}^m A(m;\bfn)_{i+1}\partial_j.
\end{equation}
With respect to the gradation, the Witt algebra $W(m;\bfn)$ has depth $s(W(m;\bfn))=1$ and height $h(W(m;\bfn))=|\bfp^{\bfn}-\unit|-1$. The following formula are then easy to verify~(cf.\ \cite[\S4.2]{Strade}). 
\begin{itemize}
\item For any $\alpha\in\N^m$ and any $1\le i,j\le m$, \begin{equation}\label{eq:witt bracket}
[\partial_i,X^{(\alpha)}\partial_j]=X^{(\alpha-\epsilon_i)}\partial_j.
\end{equation}
\item For any $0\le i\le h(W(m;\bfn))$, we have $[W(m;\bfn)_{-1},W(m;\bfn)_i]=W(m;\bfn)_{i-1}$.
\item For any $1\le i\le m$, we have
\begin{equation}\label{eq:partial p^n=0}
\ad(\partial_i)^{p^{n_i}}=0\quad(1\le i\le m)
\end{equation}
in $\Der(W(m;\bfn))$. 
\end{itemize}
Indeed, the second and third equations are immediate from the first one (\ref{eq:witt bracket}). Note that the second one  especially says that the Witt algebra $W(m;\bfn)$ is generated by the subspaces $W(m;\bfn)_{-1}$ and $W(m;\bfn)_{h(W(m;\bfn))}$.  

\begin{lem}\label{lem: W(m,n)} If $k$ is of characteristic $p>3$ or $(p=3, m>1)$, or
    $(p=3, \bfn>\unit)$, and if we set
    $U\coloneqq W(m;\bfn)_{-1}$, then the Witt
    algebra $W(m;\bfn)$ with its grading satisfies the conditions {\rm(G\, I), (G\,
    II), (G\, III)} in Corollary \ref{cor:AC GLA}.
\end{lem}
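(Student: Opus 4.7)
The plan is to verify each of the three conditions (G\,I), (G\,II), (G\,III) of Corollary \ref{cor:AC GLA} directly from the explicit description of $W(m;\bfn)$ given in \S\ref{subsec:witt}. With $L = W(m;\bfn)$ carrying the grading (\ref{eq:witt gr}), I would take $U = L_{-1} = \bigoplus_{i=1}^m k\partial_i$, so that $s\coloneqq s(L) = 1$ and $h\coloneqq h(L) = |\bfp^{\bfn}-\unit|-1 = \sum_{i=1}^m p^{n_i} - m - 1$.

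For (G\,I), the idea is to apply the bracket formula (\ref{eq:witt bracket}) with $\alpha = 0$: this gives $[\partial_i,\partial_j] = X^{(-\epsilon_i)}\partial_j = 0$ under the standard convention that $X^{(\beta)} = 0$ for $\beta \notin \N^m$, so $[U,U] = 0$. For (G\,II), I would invoke the observation recorded immediately after (\ref{eq:witt bracket}) that $W(m;\bfn)$ is generated as a Lie algebra by $W(m;\bfn)_{-1}$ and $W(m;\bfn)_h$; concretely, starting from the basis $\{X^{(\bfp^{\bfn}-\unit)}\partial_j\}_{j=1}^m$ of $L_h$ and iteratively bracketing with the $\partial_i \in U$ via (\ref{eq:witt bracket}), one reaches every $X^{(\alpha)}\partial_j$ with $0\le \alpha \le \bfp^{\bfn}-\unit$, which is a basis of $L$.

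For (G\,III), since every case in the hypothesis forces $p \ge 3 > 2$, only the inequality $s < h$ needs to be checked, i.e., $\sum_{i=1}^m p^{n_i} \ge m + 3$. A short case analysis handles the three regimes: if $p \ge 5$, then $\sum p^{n_i} \ge 5m \ge m+4$; if $p = 3$ and $m \ge 2$, then $\sum p^{n_i} \ge 3m \ge m+4$; and if $p = 3$ with some $n_{i_0} \ge 2$, then $\sum p^{n_i} \ge 9 + 3(m-1) = 3m + 6 \ge m + 8$. In every regime the strict inequality $s < h$ holds. Since all three verifications follow mechanically from the formulas in \S\ref{subsec:witt} together with a trivial numerical check, I do not anticipate any substantive obstacle: the role of the lemma is simply to record that the relevant Witt algebras fall within the scope of Corollary \ref{cor:AC GLA}.
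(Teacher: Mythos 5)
Your proposal is correct and follows essentially the same route as the paper: both verify (G\,I) from $[L_{-1},L_{-1}]\subseteq L_{-2}=0$ (you compute the bracket directly, the paper cites $s(W(m;\bfn))=1$, which is the same fact), both get (G\,II) from the formula $[W(m;\bfn)_{-1},W(m;\bfn)_i]=W(m;\bfn)_{i-1}$, and both reduce (G\,III) to the numerical inequality $h(W(m;\bfn))=|\bfp^{\bfn}-\unit|-1\ge 2$ in the stated cases. Your case analysis for the last point matches the paper's check.
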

\begin{proof} Let us check the conditions one by one as follows:
    \begin{itemize} 
        \item (G\, I) follows from the fact that
            $s(W(m;\bfn))=1$;
        \item (G\, II) follows from the formula:
              $[W(m;\bfn)_{-1},W(m;\bfn)_i]=W(m;\bfn)_{i-1};$
        \item (G\, III) follows from the fact that
            $h(W(m;\bfn))=|\bfp^{\bfn}-\unit|-1\geq 2$ when
            $p>3$ or $(p=3, m>1)$, or
    $(p=3, \bfn>\unit)$.
    \end{itemize}
\end{proof}
\todo[inline]{Is this OK?}

\begin{rem}\label{rem:Witt alg}
It is known that the Witt algebra $W(m;\bfn)$ is simple unless $(m,\bfn,p)=(1,1,2)$. In characteristic $p=3$, then the Witt algebra $W(1;1)$ is isomorphic to $\fsl_2$ as a Lie algebra~(cf.\ \cite[\S4.4 (B)]{Strade}), hence it is of classical type~(cf.\ Remark \ref{rem:classical p=3}). 
\end{rem}

\begin{rem}
If $\bfn=\unit$, then $A(m;\unit)\simeq k[T_1,\dots,T_m]/(T_1^p,\dots,T_m^p)$ as a $k$-algebra. Moreover it turns out that
\begin{equation*}
W(m;\unit)=\sum_{i=1}^mA(m;\unit)\partial_i=\Der_k(A(m;\unit)), 
\end{equation*}
hence it has a restricted structure given by the usual $p$-th multiplication of derivations $D\longmapsto D^p$. In terms of the basis $X^{(\alpha)}\partial_i$, the restricted structure can be described by
\begin{equation*}
(X^{(\alpha)}\partial_i)^{[p]}=
\begin{cases}
X_i\partial_i&\text{if $\alpha=\epsilon_i$},\\
0&\text{otherwise}.
\end{cases}
\end{equation*}
Under this restricted structure, the graded Lie algebra $W(m;\unit)$ gives a $\Z$-graded restricted Lie algebra in the sense of Definition \ref{def:GRLA}. 
\end{rem}

For any $1\le i\le m$, we define the map $v_i:A(m;\bfn)\lto\Z\cap [0,p^{n_i}]$ to be
\begin{equation*}
v_i(f)\Def
\begin{cases}
\min\{\alpha_i\,|\,c_{\alpha}\neq 0\}&\text{if $f\neq 0$},\\
p^{n_i}&\text{if $f=0$}
\end{cases}
\end{equation*}
for any $f=\sum_{0\le\alpha\le\bfp^{\bfn}-\unit}c_{\alpha}X^{(\alpha)}\in A(m;\bfn)$. Moreover, 
let us define the map
\begin{equation*}
d_i:W(m;\bfn)\lto\Z\cap [-1,p^{n_i}]
\end{equation*}
to be
\begin{equation}\label{eq:d_i}
d_i(D)\Def 
\begin{cases}
\min\{v_i(f_j)-\delta_{ij}\,|\,j=1,\dots,m\}&\text{if $D\neq 0$},\\
p^{n_i}&\text{if $D=0$}
\end{cases}
\end{equation}
for any $D=\sum_{j=1}^mf_j\partial_j\in W(m;\bfn)$. 
Note that $d_i(D)=p^{n_i}$ if and only if $D=0$. 
For later use, we put some computations here.

\begin{lem}\label{lem:W(m;n) formula}
Suppose that $k$ is a field of characteristic $p>0$.  Fix $1\le i\le m$. 
\begin{enumerate}
\renewcommand{\labelenumi}{(\arabic{enumi})}
\item Any non-zero element $0\neq D\in W(m;\bfn)$ can be uniquely written as 
\begin{equation*}
D=\sum_{\beta_0\le \beta\le p^{n_i}-1}X_i^{(\beta)}D_{\beta}
\end{equation*}
so that $D_{\beta_0}\neq 0$ and $[\partial_i,D_{\beta}]=0$ for any $\beta_0\le\beta\le p^{n_i}-1$. Then we have
\begin{equation*}
d_i(D)=
\begin{cases}
\beta_0&\text{if $D_{\beta_0}(X_i)=0$},\\
\beta_0-1&\text{if $D_{\beta_0}(X_i)\neq 0$}.
\end{cases}
\end{equation*}

\item For any $D,E\in W(m;\bfn)$, we have
\begin{equation*}
d_i(D+E)\ge \min\{d_i(D),d_i(E)\}.
\end{equation*}

\item For any $D,E\in W(m;\bfn)$, we have
\begin{equation*}
d_i([D,E])\ge\min\{p^{n_i},d_i(D)+d_i(E)\}.
\end{equation*}
Hence, $[D,E]=0$ if $d_i(D)+d_i(E)\ge p^{n_i}$. 
\item For any $D,E\in W(m;\bfn)$ satisfying $d_i(D)+d_i(E)-1\ge p^{n_i}$, we have 
\begin{equation*}
\ad(D)\ad(E)=0.
\end{equation*}
\end{enumerate}
\end{lem}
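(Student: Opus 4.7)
The plan is to treat the four parts in order, with (1) supplying the structural decomposition, (2) being a formal ultrametric statement, (3) reducing to a monomial computation, and (4) following formally from (3). For (1), I would exploit the fact that in the divided power algebra one has $X^{(\alpha)}=X_i^{(\alpha_i)}X^{(\alpha-\alpha_i\epsilon_i)}$ (since $\binom{\alpha}{\alpha_i\epsilon_i}=1$), so that every $f\in A(m;\bfn)$ admits a unique expansion $f=\sum_\beta X_i^{(\beta)}h_\beta$ in which the coefficients $h_\beta$ are independent of $X_i$. Applying this coefficient-wise to $D=\sum_j f_j\partial_j$ and regrouping yields the decomposition $D=\sum_\beta X_i^{(\beta)}D_\beta$, with each $D_\beta$ a linear combination of the $\partial_j$ whose coefficients involve no $X_i$, which immediately implies $[\partial_i,D_\beta]=0$. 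The value of $d_i(D)$ is then read off by noting that $D_{\beta_0}(X_i)$ equals the $i$-th coefficient of $D_{\beta_0}$; a short case analysis on whether this coefficient vanishes gives the two stated formulas.

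Part (2) is immediate from the ultrametric inequality $v_i(f+g)\ge\min\{v_i(f),v_i(g)\}$ applied to each coefficient, followed by taking the minimum over the index $j$. For (3), by bilinearity of the bracket together with (2), it suffices to verify the inequality on monomials $D=X^{(\alpha)}\partial_j$, $E=X^{(\beta)}\partial_k$, for which $d_i(D)=\alpha_i-\delta_{ij}$ and $d_i(E)=\beta_i-\delta_{ik}$. A direct application of the divided-power product rule gives
\[
[X^{(\alpha)}\partial_j,X^{(\beta)}\partial_k]=\binom{\alpha+\beta-\epsilon_j}{\alpha}X^{(\alpha+\beta-\epsilon_j)}\partial_k-\binom{\alpha+\beta-\epsilon_k}{\beta}X^{(\alpha+\beta-\epsilon_k)}\partial_j,
\]
and each nonzero summand on the right has $d_i$-value equal to $\alpha_i+\beta_i-\delta_{ij}-\delta_{ik}=d_i(D)+d_i(E)$; any summand that vanishes because $\alpha+\beta-\epsilon_j$ fails to satisfy $\le\bfp^{\bfn}-\unit$ contributes $d_i=p^{n_i}$, which automatically satisfies the required bound. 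Combining with (2) establishes (3).

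Part (4) is then a formal consequence: for any $F\in W(m;\bfn)$, two applications of (3) to $[D,[E,F]]$ give
\[
d_i([D,[E,F]])\ge d_i(D)+d_i(E)+d_i(F)\ge d_i(D)+d_i(E)-1\ge p^{n_i},
\]
using the trivial bound $d_i(F)\ge -1$, which forces $[D,[E,F]]=0$ and hence $\ad(D)\ad(E)=0$. The main bookkeeping obstacle I anticipate lies in (3), where one must carefully track both the summands that vanish at the boundary of $A(m;\bfn)$ and the behaviour of the binomial coefficients; however, since vanishing only strengthens the inequality and bilinearity handles the rest, this is a technical rather than a conceptual difficulty.
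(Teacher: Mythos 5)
Your proposal is correct, and for parts (2) and (3) it takes a genuinely different route from the paper. The paper proves (2) and (3) by a hands-on case analysis built on the decomposition from (1): it writes $D=\sum_\beta X_i^{(\beta)}D_\beta$ and $E=\sum_\gamma X_i^{(\gamma)}E_\gamma$, isolates the lowest-order terms $[X_i^{(\beta_0)}D_{\beta_0},X_i^{(\gamma_0)}E_{\gamma_0}]$ and the next ones, and tracks in each case whether the quantities $D_{\beta_0}(X_i)$, $E_{\gamma_0}(X_i)$, $F_{\beta_0+\gamma_0}(X_i)$ vanish, invoking (1) to read off $d_i$ of the result. You instead observe that (2) is a coefficient-wise ultrametric inequality for $v_i$ (which is cleaner than the paper's comparison of $\beta_0$ and $\gamma_0$ with cancellation cases), and then use (2) plus bilinearity to reduce (3) to the single monomial identity
\[
[X^{(\alpha)}\partial_j,X^{(\beta)}\partial_k]=\binom{\alpha+\beta-\epsilon_j}{\alpha}X^{(\alpha+\beta-\epsilon_j)}\partial_k-\binom{\alpha+\beta-\epsilon_k}{\beta}X^{(\alpha+\beta-\epsilon_k)}\partial_j,
\]
where each surviving summand has $d_i$ exactly $d_i(M)+d_i(N)$ and each vanishing summand contributes $d_i=p^{n_i}$. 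Both arguments are valid; yours is more systematic and arguably easier to verify, since every term is controlled uniformly rather than only the leading ones, at the cost of needing the (easy) observation that every monomial $M$ occurring in $D$ satisfies $d_i(M)\ge d_i(D)$. Parts (1) and (4) are handled essentially as in the paper: (1) is the obvious expansion in divided powers of $X_i$ with the two-case reading of $d_i$, and (4) follows formally from (3) via $d_i(F)\ge-1$.
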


\begin{proof}
(1) The first assertion is immediate from the definition of the Witt algebra $W(m;\bfn)$. Then the last assertion follows from the definition of the map $d_i$~(cf.\ (\ref{eq:d_i})). 

(2) If $D=0$ or $E=0$, then this is true. So let us assume that $D\neq 0$ and $E\neq 0$. Let us write
\begin{equation*}
D=\sum_{\beta_0\le \beta\le p^{n_i}-1}X_i^{(\beta)}D_{\beta},\quad E=\sum_{\gamma_0\le \gamma\le p^{n_i}-1}X_i^{(\gamma)}E_{\gamma}
\end{equation*} 
so that $D_{\beta_0}\neq 0$ and $E_{\gamma_0}\neq 0$ and $[\partial_i,D_{\beta}]=[\partial_i,E_{\gamma}]=0$. Without loss of generality, we may assume that $\beta_0\le \gamma_0$. 
If $\beta_0<\gamma_0$, by (1), we have $d_i(D+E)=d_i(D)=\min\{d_i(D),d_i(E)\}$, Hence, the inequality holds. On the other hand, let us assume that $\beta_0=\gamma_0$. Then we have
\begin{equation*}
D+E=\sum_{\beta_0\le \beta\le p^{n_i}-1}X_i^{(\beta)}(D_{\beta}+E_{\beta}).
\end{equation*}
If $D_{\beta_0}+E_{\beta_0}=0$, then we have $d_i(D+E)\ge \beta_0\ge d_i(D),d_i(E)$. Hence, the inequality holds. Let us assume that $D_{\beta_0}+E_{\beta_0}\neq 0$. If $(D_{\beta_0}+E_{\beta_0})(X_i)=0$, then by (1), we have $d_i(D+E)=\beta_0\ge d_i(D),d_i(E)$, hence the inequality. If $(D_{\beta_0}+E_{\beta_0})(X_i)\neq 0$, then $D_{\beta_0}(X_i)\neq 0$ or $E_{\beta_0}(X_i)\neq 0$, hence, by (1), we have $d_i(D+E)=\beta_0-1=\min\{d_i(D),d_i(E)\}$. This completes the proof.

(3) If $[D,E]=0$, the inequality holds. Hence, let us suppose that $[D,E]\neq 0$. 
Let us write as in (1), 
\begin{equation*}
D=\sum_{\beta_0\le \beta\le p^{n_i}-1}X_i^{(\beta)}D_{\beta},\quad E=\sum_{\gamma_0\le \gamma\le p^{n_i}-1}X_i^{(\gamma)}E_{\gamma}
\end{equation*} 
so that $D_{\beta_0}\neq 0$ and $E_{\gamma_0}\neq 0$ and $[\partial_i,D_{\beta}]=[\partial_i,E_{\gamma}]=0$. Then we have
\begin{equation*}
\lbrack D,E\rbrack
=\sum_{\beta_0\le\beta\le p^{n_i}-1}\sum_{\gamma_0\le\gamma\le p^{n_i}-1}[X_i^{(\beta)}D_{\beta},X_i^{(\gamma)}E_{\gamma}].
\end{equation*}
First assume that $[X_i^{(\beta_0)}D_{\beta_0},X_i^{(\gamma_0)}E_{\gamma_0}]=0$. Note that
\begin{equation*}
\begin{aligned}
&~[X_i^{(\beta_0+1)}D_{\beta_0+1},X_i^{(\gamma_0)}E_{\gamma_0}]+[X_i^{(\beta_0)}D_{\beta_0},X_i^{(\gamma_0+1)}E_{\gamma_0+1}]\\
=&~X_i^{(\beta_0+1)}X_i^{(\gamma_0-1)}D_{\beta_0+1}(X_i)E_{\gamma_0}-X_i^{(\beta_0)}X_i^{(\gamma_0)}E_{\gamma_0}(X_i)D_{\beta_0+1}\\
&+X_i^{(\beta_0)}X_i^{(\gamma_0)}D_{\beta_0}(X_i)E_{\gamma_0+1}-X_i^{(\beta_0-1)}X_i^{(\gamma_0+1)}E_{\gamma_0+1}(X_i)D_{\beta_0}\\
=&~X_i^{(\beta_0+\gamma_0)}F_{\beta_0+\gamma_0}
\end{aligned}
\end{equation*} 
with
\begin{equation*}
\begin{aligned}
F_{\beta_0+\gamma_0}
=&~\binom{\beta_0+\gamma_0}{\beta_0+1}D_{\beta_0+1}(X_i)E_{\gamma_0}-\binom{\beta_0+\gamma_0}{\gamma_0}E_{\gamma_0}(X_i)D_{\beta_0+1}\\
&+\binom{\beta_0+\gamma_0}{\beta_0}D_{\beta_0}(X_i)E_{\gamma_0+1}-\binom{\beta_0+\gamma_0}{\beta_0-1}E_{\gamma_0+1}(X_i)D_{\beta_0}.
\end{aligned}
\end{equation*}
If $E_{\gamma_0}(X_i)=D_{\beta_0}(X_i)=0$, then $F_{\beta_0+\gamma_0}(X_i)=0$. Therefore, the result (1) implies that
\begin{equation*}
d_i([D,E])\ge\min\{p^{n_i},\beta_0+\gamma_0\}=\min\{p^{n_i},d_i(D)+d_i(E)\}.
\end{equation*}
If $F_{\beta_0+\gamma_0}(X_i)\neq 0$, then $E_{\gamma_0}(X_i)\neq 0$ or $D_{\beta_0}(X_i)\neq 0$. Thus, again by (1), we have 
\begin{equation*}
d_i([D,E])\ge\min\{p^{n_i},\beta_0+\gamma_0-1\}\ge\min\{p^{n_i},d_i(D)+d_i(E)\}.
\end{equation*}
It remains to handle the case when $[X_i^{(\beta_0)}D_{\beta_0},X_i^{(\gamma_0)}E_{\gamma_0}]\neq 0$. Note that
\begin{equation*}
[X_i^{(\beta_0)}D_{\beta_0},X_i^{(\gamma_0)}E_{\gamma_0}]
=X_i^{(\beta_0+\gamma_0-1)}F_{\beta_0+\gamma_0-1}
\end{equation*}
with
\begin{equation*}
F_{\beta_0+\gamma_0-1}=\binom{\beta_0+\gamma_0-1}{\gamma_0}D_{\beta_0}(X_i)E_{\gamma_0}-\binom{\beta_0+\gamma_0-1}{\beta_0}E_{\gamma_0}(X_i)D_{\beta_0}.
\end{equation*}
As $F_{\beta_0+\gamma_0-1}\neq 0$, $D_{\beta_0}(X_i)\neq 0$ or $E_{\gamma_0}(X_i)\neq 0$. Without loss of generality, we may assume that $D_{\beta_0}(X_i)\neq 0$. 
If $F_{\beta_0+\gamma_0-1}(X_i)=0$, then by (1), we have
\begin{equation*}
d_i([D,E])\ge\min\{p^{n_i},\beta_0+\gamma_0-1\}\ge\min\{p^{n_i},d_i(D)+d_i(E)\}. 
\end{equation*}
If $F_{\beta_0+\gamma_0-1}(X_i)\neq 0$, we must have $D_{\beta_0}(X_i)E_{\gamma_0}(X_i)\neq 0$. Thus, again by (1), we have
\begin{equation*}
d_i([D,E])\ge\min\{p^{n_i},\beta_0+\gamma_0-2\}\ge\min\{p^{n_i},d_i(D)+d_i(E)\}. 
\end{equation*}
This completes the proof.

(4) Indeed, for any $F\in W(m;\bfn)$, by (3), 
\begin{equation*}
d_i(\ad(D)\ad(E)(F))\ge\min\{p^{n_i},d_i(D)+d_i(\ad(E)(F))\}\ge\min\{p^{n_i},d_i(D)+d_i(E)+d_i(F)\}.
\end{equation*}
As $d_i(F)\ge -1$, the condition $d_i(D)+d_i(E)-1\ge p^{n_i}$ thus implies $\ad(D)\ad(E)(F)=0$. This completes the proof. 
\end{proof}

\subsection{The graded Cartan type Lie algebras}\label{subsec:gr cartan}

Let $L$ be a Lie algebra over $k$. We set $L^{(0)}\Def L$.  Inductively, for any $i>0$, we define $L^{(i)}$ to be the derived subalgebra of $L^{(i-1)}$, \emph{i.e.}\ $L^{(i)}\Def [L^{(i-1)},L^{(i-1)}]$. Moreover, we set $L^{(\infty)}\Def\cap_{i=1}^{\infty}L^{(i)}$. As we have assumed that $L$ is finite dimensional, there exists an integer $n>0$ such that $L^{(\infty)}=L^{(n)}$. 

We continue to use the same notation as in \S\ref{subsec:witt}. We define
\begin{equation*}
\Omega^0(m;\bfn)\Def A(m;\bfn),\quad\Omega^1(m;\bfn)\Def\Hom_{A(m;\bfn)}(W(m;\bfn),A(m;\bfn)), 
\end{equation*}
and $\Omega^r(m;\bfn)\Def \wedge^r\Omega^1(m;\bfn)$ for $r>1$. 
Then we have the natural map $d:A(m;\bfn)\lto\Omega^1(m;\bfn)\,;\,f\longmapsto df$, where $df(D)\Def D(f)$ for any $D\in W(m;\bfn)$, and $dX_1,\dots,dX_m$ then form a free basis of $\Omega^1(m;\bfn)$.  Note that there exists a natural action of $W(m;\bfn)$ on $\Omega^1(m;\bfn)$, \emph{i.e.}\
\begin{equation*}
W(m;\bfn)\times\Omega^1(m;\bfn)\lto\Omega^1(m;\bfn)\,;\,(D,\omega)\longmapsto D(\omega(-))-\omega([D,-]), 
\end{equation*}
which canonically extends to an action on $\Omega^r(m;\bfn)$ for any
$r$.
This action is just the Lie derivative of differential geometry.

Let us consider the following differential forms (cf.\ \cite[\S4.2]{Strade})
\begin{equation}\label{eq:omega_S}
\omega_S=dX_1\wedge\dots\wedge dX_m,\quad m\ge 2.
\end{equation}
\begin{equation}\label{eq:omega_H}
\omega_H=\sum_{i=1}^rdX_i\wedge dX_{i+r},\quad m=2r.
\end{equation}
\begin{equation}\label{eq:omega_K}
\omega_K=dX_{2r+1}+\sum_{i=1}^{2r}\sigma(i)X_idX_{i'},\quad m=2r+1
\end{equation}
where
\begin{equation}\label{eq:i' and sigma(i)}
i'=
\begin{cases}
i+r\\
i-r
\end{cases}
~~\sigma(i)=
\begin{cases}
1&~~\text{if}~~1\le i\le r,\\
-1&~~\text{if}~~r+1\le i\le 2r.
\end{cases}
\end{equation}

\begin{rem}\label{rem:omega S m=2 0}
Note that in the case when $m=2=2r$, we have
$\omega_S=\omega_H$. Thus, in \cite{Strade} and also in other
references, the index $m$ is usually assumed to be $m\ge 3$ for
the differential form $\omega_S$. However, we basically do not
assume $m\neq 2$ in the sequel. This is convenient for handling non-graded simple Lie algebras~(cf.\ \S\ref{subsec:cartan simple}). 
In the case when $m=2$, we also have $S(2;\bfn)=H(2;\bfn)$ (see
below). 
\end{rem}

\subsubsection{The Special algebras}\label{subsubsec:S(m;n)}

Let $m\ge 2$ be an integer and $\bfn\in\Z_{>0}^m$. The
\textit{Special algebra} $S(m;\bfn)$ is the Lie subalgebra of
$W(m;\bfn)$ defined as
\begin{equation*}
S(m;\bfn)\Def\{D\in W(m;\bfn)\,;\,D(\omega_S)=0\}\end{equation*}
(cf.\ (\ref{eq:omega_S})). If $m\ge 3$, then the derived subalgebra $S(m;\bfn)^{(1)}$ is simple, and hence $S(m;\bfn)^{(\infty)}=S(m;\bfn)^{(1)}\subset S(m;\bfn)$~(cf.\ \cite[\S4.2, p.187]{Strade}). 
The Lie algebra $S(m;\bfn)^{(k)}~(k=0,1)$ has a natural grading induced by the one of $W(m;\bfn)$, \emph{i.e.} 
\begin{equation}\label{eq:S grade}
S(m;\bfn)^{(k)}=\bigoplus_{i\in\Z}S(m;\bfn)^{(k)}_i\quad\text{where}\quad S(m;\bfn)^{(k)}_i\Def S(m;\bfn)^{(k)}\cap W(m;\bfn)_i.
\end{equation}

The first derived subalgebra $S(m;\bfn)^{(1)}$ has a more explicit description. For any $1\le i,j\le m$, consider the map
\begin{equation*}
D_{i,j}:A(m;\bfn)\lto W(m;\bfn)~;~f\longmapsto \partial_{j}(f)\partial_i-\partial_i(f)\partial_j.
\end{equation*}
Then $S(m;\bfn)^{(1)}$ is the Lie subalgebra of $W(m;\bfn)$ spanned by $D_{i,j}(f)~(1\le i<j\le m,~f\in A(m;\bfn))$. This description immediately implies that
\begin{equation*}
\begin{gathered}
h(S(m;\bfn)^{(1)})=|\bfp^{\bfn}-\unit|-2\quad\text{and}\quad S(m;\bfn)^{(1)}_{|\bfp^{\bfn}-\unit|-2}=\sum_{i<j}k\cdot D_{i,j}(X^{(\bfp^{\bfn}-\unit)})\\
s(S(m;\bfn)^{(k)})=1~(k=0,1)\quad\text{and}\quad S(m;\bfn)^{(1)}_{-1}=S(m;\bfn)_{-1}=W(m;\bfn)_{-1}.
\end{gathered}
\end{equation*}
Moreover, by making use of the formula (\ref{eq:witt bracket}), one can easily see that
\begin{itemize}
\item For any $\alpha\in\N^m$ and any $1\le i,j,k\le n$, \begin{equation}\label{eq:S bracket}
[\partial_i,D_{j,k}(X^{(\alpha)})]=D_{j,k}(X^{(\alpha-\epsilon_i)}).
\end{equation}
\item For any $0\le i\le h(S(m;\bfn)^{(1)})$, we have
\begin{equation}\label{eq:S bracket 2}
[S(m;\bfn)^{(1)}_{-1},S(m;\bfn)^{(1)}_i]=S(m;\bfn)^{(1)}_{i-1}.
\end{equation}
\end{itemize}

\begin{lem}\label{lem: S(m,n)} If $k$ is of characteristic $p\ge 3$, and if we set
    $U\coloneqq S(m;\bfn)^{(1)}_{-1}$, then  $S(m;\bfn)^{(1)}$
    with its grading satisfies the conditions {\rm(G\, I), (G\,
    II), (G\, III)} in Corollary \ref{cor:AC GLA}.
\end{lem}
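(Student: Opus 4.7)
My plan is to verify each of the three conditions of Corollary \ref{cor:AC GLA} in turn for $L = S(m;\bfn)^{(1)}$ equipped with the grading \eqref{eq:S grade} and the subspace $U \coloneqq S(m;\bfn)^{(1)}_{-1}$. The key point is that all three conditions follow essentially for free from the explicit facts already gathered in \S\ref{subsubsec:S(m;n)}, so no deep new input is required; the work is entirely bookkeeping.

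First, for (G\,I), I will use the identification $U = S(m;\bfn)^{(1)}_{-1} = W(m;\bfn)_{-1} = \bigoplus_{i=1}^m k\,\partial_i$ recorded just after \eqref{eq:S bracket 2}. Since $[\partial_i,\partial_j] = 0$ is immediate from \eqref{eq:witt bracket}, we obtain $[U,U]=0$ at once. Next, for (G\,II), I will invoke the formula $[S(m;\bfn)^{(1)}_{-1},S(m;\bfn)^{(1)}_i] = S(m;\bfn)^{(1)}_{i-1}$ from \eqref{eq:S bracket 2}. Starting from the top component $L_h = S(m;\bfn)^{(1)}_{|\bfp^{\bfn}-\unit|-2}$ and bracketing repeatedly with $U$, we descend through $L_{h-1}, L_{h-2}, \dots, L_{-1}$, so every graded piece of $L$ lies in $\langle U, L_h\rangle_{\Lie}$. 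This gives $L = \langle U, L_h\rangle_{\Lie}$.

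Finally, for (G\,III), since we are in characteristic $p \geq 3 > 2$, only the inequality $s(L) < h(L)$ needs to be checked. With $s(L) = 1$ and $h(L) = |\bfp^{\bfn}-\unit|-2$ as recorded in \S\ref{subsubsec:S(m;n)}, and with $m \geq 2$ (built into the definition of $S(m;\bfn)$, cf.\ \S\ref{subsubsec:S(m;n)}), one has
\[
|\bfp^{\bfn}-\unit| = \sum_{i=1}^m (p^{n_i}-1) \geq m(p-1) \geq 2\cdot 2 = 4,
\]
so $h(L) \geq 2 > 1 = s(L)$. The ``main obstacle'' is merely ensuring this last numerical inequality: since $p \geq 3$ and $m \geq 2$, it holds automatically, so nothing subtle arises.
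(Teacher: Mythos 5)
Your proposal is correct and follows essentially the same route as the paper's own proof: (G\,I) from the depth-one structure (equivalently the explicit basis $\partial_1,\dots,\partial_m$), (G\,II) from the bracket formula \eqref{eq:S bracket 2}, and (G\,III) from the numerical estimate $h(L)=|\bfp^{\bfn}-\unit|-2\geq 2>1=s(L)$ when $p\geq 3$ and $m\geq 2$. Nothing is missing.
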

\begin{proof} Let us check the conditions one by one as follows:
    \begin{itemize} 
        \item (G\, I) follows from the fact that
            $s(S(m;\bfn)^{(1)})=1$;
        \item (G\, II) follows from the formula:
              $[S(m;\bfn)^{(1)}_{-1},S(m;\bfn)^{(1)}_i]=S(m;\bfn)^{(1)}_{i-1};$
        \item (G\, III) follows from the fact that
            $h(S(m;\bfn)^{(1)})=|\bfp^{\bfn}-\unit|-2\geq 2$ when
            $p\ge 3$ (note that $m\geq 2$ by definition).
    \end{itemize}
\end{proof}

\subsubsection{The Hamiltonian algebras}\label{subsubsec:H(2r;n)}

Here, we apply the previous notation as $m\Def 2r$. The
\textit{Hamiltonian algebra} $H(2r;\bfn)$ is the Lie subalgebra
of $W(2r;\bfn)$ defined as
\begin{equation*}
H(2r;\bfn)\Def\{D\in W(2r;\bfn)\,;\,D(\omega_H)=0\}\end{equation*}
(cf.\ (\ref{eq:omega_H})). Then the Lie algebra $H(2r;\bfn)^{(2)}$ is simple, and hence $H(2r;\bfn)^{(\infty)}=H(2r;\bfn)^{(2)}\subset H(2r;\bfn)$~(cf.\ \cite[\S4.2, p.188]{Strade}). 
The Lie algebra $H(2r;\bfn)^{(l)}~(0\le l\le 2)$ has a  natural grading induced by the one of $W(2r;\bfn)$ in the same way as (\ref{eq:S grade}), \emph{i.e.}\ by putting $H(2r;\bfn)^{(l)}_i\Def H(2r;\bfn)^{(l)}\cap W(m;\bfn)_{i}$ for $i\in\Z$.

Consider the linear map
\begin{equation*}
D_H:A(2r;\bfn)\lto W(2r;\bfn)~;~f\mapsto\sum_{i=1}^{2r}\sigma(i)\partial_i(f)\partial_{i'}
\end{equation*}
(cf.\ (\ref{eq:i' and sigma(i)})). Then the first derived subalgebra $H(2r;\bfn)^{(1)}$ is the Lie subalgebra of $W(2r;\bfn)$ generated by $D_H(X^{(\alpha)})$ for $0<\alpha\le\bfp^{\bfn}-\unit$. 
If we consider the Poisson bracket on $A({2r};\bfn)$,
\begin{equation*}
\{f,g\}\Def\sum_{i=1}^{2r}\sigma(i)\partial_i(f)\partial_{i'}(g), 
\end{equation*}
then we have
\begin{equation}\label{eq:poisson}
[D_H(f),D_H(g)]=D_H(\{f,g\}).
\end{equation}
The following are immediate for $l=1,2$.
\begin{equation*}
\begin{gathered}
h(H(2r;\bfn)^{(l)})=|\bfp^{\bfn}-\unit|-1-l\quad\text{and}\quad H(2r;\bfn)^{(l)}_{|\bfp^{\bfn}-\unit|-1-l}=
\begin{cases}
k\cdot D_H(X^{(\bfp^{\bfn}-\unit)})&\text{if $l=1$},\\
\sum_{i=1}^{2r}k\cdot D_{H}(X^{(\bfp^{\bfn}-\unit-\epsilon_i)})&\text{if $l=2$},
\end{cases}\\
s(H(2r;\bfn)^{(l)})=1\quad\text{and}\quad H(2r;\bfn)^{(l)}_{-1}=W(2r;\bfn)_{-1}.
\end{gathered}
\end{equation*}
Furthermore, by (\ref{eq:poisson}), one immediately obtain the following.
\begin{itemize}
\item For $1\le i\le 2r$ and $\alpha\in\N^{2r}$, 
\begin{equation}\label{eq:H bracket}
[D_H(X_i),D_H(X^{(\alpha)})]=D_H(\{X_i,X^{(\alpha)}\})=\sigma(i)D_H(X^{(\alpha-\epsilon_{i'})}).
\end{equation}
\item Therefore, for any $l=1,2$ and any $0\le i\le h(H(2r;\bfn)^{(l)})$, 
\begin{equation}\label{eq:H bracket 2}
[H(2r;\bfn)^{(l)}_{-1},H(2r;\bfn)^{(l)}_i]=H(2r;\bfn)^{(l)}_{i-1}.
\end{equation}
\end{itemize}

\begin{lem}\label{lem: H(m,n)} If $k$ is of characteristic $p>3$
    or $(p=3,r>1)$ or $(p=3,\bfn>\unit)$, and if we set
    $U\coloneqq H(2r;\bfn)^{(2)}_{-1}$, then  $H(2r;\bfn)^{(2)}$
    with its grading satisfies the conditions {\rm(G\, I), (G\,
    II), (G\, III)} in Corollary \ref{cor:AC GLA}.
\end{lem}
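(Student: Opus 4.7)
The plan is to mimic exactly the structure of Lemma \ref{lem: W(m,n)} and Lemma \ref{lem: S(m,n)}, verifying the three conditions of Corollary \ref{cor:AC GLA} one by one, using the basic invariants of the grading on $H(2r;\bfn)^{(2)}$ already recorded in \S\ref{subsubsec:H(2r;n)}.

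For (G\,I), since $U = H(2r;\bfn)^{(2)}_{-1}$ and $s(H(2r;\bfn)^{(2)}) = 1$, we have $[U,U] \subseteq H(2r;\bfn)^{(2)}_{-2} = 0$, so $U$ is abelian.

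For (G\,II), we apply the formula
\[
[H(2r;\bfn)^{(2)}_{-1},\, H(2r;\bfn)^{(2)}_i] = H(2r;\bfn)^{(2)}_{i-1}
\]
iteratively, starting from $i = h := h(H(2r;\bfn)^{(2)})$ and descending: each application of $\ad(U) = \ad(L_{-1})$ produces the next graded piece below, so after finitely many steps we obtain every $H(2r;\bfn)^{(2)}_i$ for $-1 \le i \le h$. Together with $L_h$ these span $L$, hence $L = \langle U, L_h\rangle_{\Lie}$.

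For (G\,III), since $p \ge 3$ we only need the $p>2$ version, i.e. $s < h$. We have $s = 1$ and $h = |\bfp^{\bfn}-\unit| - 3 = \sum_{i=1}^{2r}(p^{n_i}-1) - 3$, so it suffices to check $h \ge 2$, i.e. $|\bfp^{\bfn}-\unit| \ge 5$, in each of the three hypothesised cases. If $p > 3$ then $|\bfp^{\bfn}-\unit| \ge 2r(p-1) \ge 2\cdot 4 = 8$; if $p = 3$ and $r > 1$ then $|\bfp^{\bfn}-\unit| \ge 2r(p-1) \ge 4\cdot 2 = 8$; and if $p = 3$ with $\bfn > \unit$ (so some $n_i \ge 2$) then $|\bfp^{\bfn}-\unit| \ge (p^2-1) + (2r-1)(p-1) \ge 8 + 1 = 9$. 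In every case $h \ge 2 > 1 = s$, so (G\,III) is satisfied.

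There is essentially no obstacle here: the only numerical content is the case analysis for (G\,III), and the three arithmetic inequalities above exactly explain why the case $(p,r,\bfn) = (3,1,\unit)$ (which gives $m = 2$, $|\bfp^{\bfn}-\unit| = 4$, $h = 1$) must be excluded from the hypothesis. Note that the case $m = 2$ is allowed elsewhere because $S(2;\bfn) = H(2;\bfn)$ (cf.\ Remark \ref{rem:omega S m=2 0}), so there is no formal obstruction to taking $r = 1$ when the weight is large enough or $p > 3$.
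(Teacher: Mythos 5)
Your proposal is correct and follows essentially the same route as the paper's proof: (G\,I) from $s(H(2r;\bfn)^{(2)})=1$, (G\,II) from the bracket formula $[H(2r;\bfn)^{(2)}_{-1},H(2r;\bfn)^{(2)}_i]=H(2r;\bfn)^{(2)}_{i-1}$, and (G\,III) from $h(H(2r;\bfn)^{(2)})=|\bfp^{\bfn}-\unit|-3\geq 2$ under the stated hypotheses. You merely make the arithmetic for (G\,III) and the exclusion of $(p,r,\bfn)=(3,1,\unit)$ more explicit than the paper does.
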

\begin{proof} Let us check the conditions one by one as follows:
    \begin{itemize} 
        \item (G\, I) follows from the fact that
            $s(H(2r;\bfn)^{(2)})=1$;
        \item (G\, II) follows from the formula:
              $[H(2r;\bfn)^{(2)}_{-1},H(2r;\bfn)^{(2)}_i]=H(2r;\bfn)^{(2)}_{i-1};$
        \item (G\, III) follows from the fact that
            $h(H(2r;\bfn)^{(2)})=|\bfp^{\bfn}-\unit|-3\geq 2$ when
            $p>3$
    or $(p=3,r>1)$ or $(p=3,\bfn>\unit)$.
    \end{itemize}
\end{proof}

\begin{rem}\label{rem:H alg}
It is known that if $p=3$, then the simple Hamiltonian algebra $H(2;\unit)^{(2)}$ is isomorphic to $\mathfrak{psl}_3$~(cf.\ \cite[\S4.4 (B)]{Strade}), hence it is of classical type~(cf.\ Remark \ref{rem:classical p=3}). 
\end{rem}


\subsubsection{The Contact algebras}

Finally we recall the description of the derived subalgebra $K(2r+1;\bfn)^{(1)}$ of the \textit{Contact algebra}
\begin{equation*}
K(2r+1;\bfn)\Def\{D\in W(2r+1;\bfn)\,;\,D(\omega_K)\in A(2r+1;\bfn)\omega_K\} 
\end{equation*}
(cf.\ (\ref{eq:omega_K})). For this aim, we follow
\cite[\S4.2]{Strade} and \cite[\S4.5]{sf88}. Here we always assume that $k$ is of characteristic $p>2$. 
Define the linear map
\begin{equation*}
D_K:A({2r+1};\bfn)\lto W({2r+1};\bfn)
\end{equation*}
to be
\begin{equation*}
D_K(f)=\sum_{j=1}^{2r}(\sigma(j)\partial_j(f)+X_{j'}\partial_{2r+1}(f))\partial_{j'}+(2f-\sum_{j=1}^{2r}X_j\partial_j(f))\partial_{2r+1}. 
\end{equation*}
We lift the Poisson bracket (\ref{eq:poisson}) on $A(2r;\bfn)$ to the one $\{-,-\}$ on $A({2r+1};\bfn)$ by composing with the projection $A({2r+1};\bfn)\lto A({2r};\bfn)$ given by $X_{2r+1}\mapsto 0$.

According to \cite[\S4.5, Theorem 5.5]{sf88}, we
have
\begin{itemize}
    \item $K({2r+1};\bfn)^{(1)}$ is a simple Lie algebra;
    \item $K({2r+1};\bfn)^{(1)}$ can be described as follows:
\begin{equation*}
K({2r+1};\bfn)^{(1)}\Def
\begin{cases}
D_K(A({2r+1};\bfn))=\sum_{0\le \alpha\le \bfp^{\bfn}-\unit}k\cdot D_K(X^{(\alpha)})&\text{if $2r+1\not\equiv-3~(\text{mod  $p$})$},\\
\sum_{0\le \alpha< \bfp^{\bfn}-\unit}k \cdot D_K(X^{(\alpha)})&\text{if $2r+1\equiv-3~(\text{mod $p$})$}.
\end{cases}
\end{equation*}
\end{itemize}
\todo[inline]{Reorganized here. Is it OK?}
In contrast to the Special or Hamiltonian algebras, the grading of the Lie algebra $K({2r+1};\bfn)^{(1)}$ does not come from the one of the Witt algebra $W(2r+1;\bfn)$. The grading
\begin{equation*}
K({2r+1};\bfn)^{(1)}
=\bigoplus_{i\in\Z}K({2r+1};\bfn)^{(1)}_i
\end{equation*}
is in fact given by~(cf.\ \cite[\S4.5, Proposition 5.3 (2)]{sf88})
\begin{equation*}
K({2r+1};\bfn)^{(1)}_i
\Def\sum_{||\alpha||=i}k\cdot D_K(X^{(\alpha)}). 
\end{equation*}
Then, it turns out that the depth of $K({2r+1};\bfn)^{(1)}$ is $s=2$ and the height is $h=\sum_{i=1}^{2r+1}p^{n_i}+p^{n_{2r+1}}-(2r+4)$ if $2r+1\not\equiv-3~(\text{mod $p$})$, and $h=\sum_{i=1}^{2r+1}p^{n_{i}}+p^{n_{2r+1}}-(2r+5)$ otherwise. Moreover,
\begin{equation*}
\begin{aligned}
K({2r+1};\bfn)^{(1)}_{-1}&=\sum_{j=1}^{2r}k\cdot D_K(X_i),\\
K({2r+1};\bfn)^{(1)}_{-2}&=k\cdot D_K(1)=k\cdot \partial_{2r+1}=[K(2r+1,\bfn)^{(1)}_{-1},K(2r+1;\bfn)_{-1}^{(1)}].
\end{aligned}
\end{equation*}
More generally, by using the equations \cite[\S4.2 (4.2.12), (4.2.13)]{Strade}, one can get 
\begin{equation*}
[K(2r+1;\bfn)^{(1)}_{-1},K(2r+1;\bfn)^{(1)}_i]=K(2r+1;\bfn)^{(1)}_{i-1}
\end{equation*}
for $-1\le i\le h(K(2r+1;\bfn)^{(1)})$. See also \cite[4.1.1 Proposition]{Bois}. 
As $[K({2r+1};\bfn)^{(1)}_{-1},K({2r+1};\bfn)^{(1)}_{-1}]\neq 0$, one cannot take $U=K(2r+1;\bfn)_{-1}^{(1)}$ in Corollary \ref{cor:AC GLA} for the Contact algebra $K({2r+1};\bfn)^{(1)}$~(cf.\ Example \ref{ex:counter-example in char p=3}). Thus, we change the generators in the following way.

\begin{lem}\label{lem:K(2r+1;n)}
Let $k$ be a field of characteristic $p>3$. Let $L=K(2r+1;\bfn)^{(1)}$ with $h=h(L)$. Define the subspaces $U^{+},U^-\subset L$ to be
\begin{equation*}
\begin{aligned}
U^-&\Def\sum_{i=1}^{r}k\cdot D_K(X_i),\\
U^+&\Def\sum_{j=r+1}^{2r}k\cdot D_K(X_{j}^{(p^{n_j}-1)})+L_{h}.
\end{aligned}
\end{equation*}
Then the subspaces $U^{\pm}$ satisfy the conditions (I), (II) and (III) in Theorem \ref{thm:AC FLA} with respect to the adjoint representation $\ad:L\lto\Der(L)\subset\fgl_L$. 
\end{lem}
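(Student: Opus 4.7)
The plan is to verify conditions (I), (II), (III) of Theorem \ref{thm:AC FLA} for the pair $(U^+,U^-)$ in $L=K(2r+1;\bfn)^{(1)}$ with $\rho=\ad\colon L\hookrightarrow\fgl_L$, exploiting the $\Z$-grading of $L$ and the explicit formula
\[
D_K(X_j^{(a)})=\sigma(j)\,X_j^{(a-1)}\partial_{j'}+(2-a)\,X_j^{(a)}\partial_{2r+1}\qquad(1\le j\le 2r,\ a\ge 1),
\]
which is immediate from the definition of $D_K$. Throughout, I shall use that $\ad_L$ is the restriction of $\ad_{W(2r+1;\bfn)}$, so that vanishing statements in the Witt algebra transfer to $L$.

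For condition (II), specializing the formula above with $1\le i\le r$ gives $D_K(X_i)=\partial_{i+r}+X_i\partial_{2r+1}\in L_{-1}$, and a direct bracket computation in $W(2r+1;\bfn)$ shows $[D_K(X_i),D_K(X_j)]=0$ for all $i,j\in\{1,\dots,r\}$, so $[U^-,U^-]=0$; and since each $D_K(X_i)\in L_{-1}$, the operator $\ad(D_K(X_i))$ decreases the grading by one and hence is nilpotent of order at most $h+3$. For condition (III), I take as basis of $U^+$ the $r$ elements $D_K(X_j^{(p^{n_j}-1)})$ for $j=r+1,\dots,2r$ together with a basis of $L_h$. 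For any $D\in L_h$, the fact that $h>2$ (which is immediate from the explicit formula for $h$ once $p\ge 5$ and $r\ge 1$) together with depth $s=2$ forces $\ad(D)^2\colon L_i\to L_{i+2h}$ to vanish for every $i\ge-2$. For $E=D_K(X_j^{(p^{n_j}-1)})$ with $j\ge r+1$, the explicit shape of $E$ yields $d_j(E)=p^{n_j}-2$ in the notation of \eqref{eq:d_i}; since $p\ge 5$, we have $2d_j(E)-1=2p^{n_j}-5\ge p^{n_j}$, and Lemma \ref{lem:W(m;n) formula}(4) with index $j$ then gives $\ad(E)^2=0$ already in the Witt algebra.

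Condition (I) is the heart of the argument. The key identity is
\[
[D_K(X_i),D_K(X_{i+r}^{(a)})]=D_K(X_{i+r}^{(a-1)})\qquad(1\le i\le r,\ 1\le a\le p^{n_{i+r}}-1),
\]
which I would verify by a short direct calculation expanding both sides via the formula for $D_K$ above. Iterating this identity starting from $D_K(X_{i+r}^{(p^{n_{i+r}}-1)})\in U^+$ produces every $D_K(X_{i+r}^{(a)})$ with $1\le a\le p^{n_{i+r}}-1$, in particular all of $D_K(X_{r+1}),\dots,D_K(X_{2r})$. Combined with $U^-=\sum_{i=1}^rk\cdot D_K(X_i)$, this places the whole of $L_{-1}=\bigoplus_{j=1}^{2r}k\cdot D_K(X_j)$ inside the Lie subalgebra generated by $U^+\cup U^-$. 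Finally the graded relation $[L_{-1},L_i]=L_{i-1}$ for $-1\le i\le h$, applied to $L_h\subseteq U^+$ and iterated, exhausts every homogeneous component from $L_h$ down to $L_{-2}$, so $\langle U^+,U^-\rangle_{\Lie}=L$. The main obstacle is the explicit bracket identity above; the rest consists of grading observations and a direct application of Lemma \ref{lem:W(m;n) formula}(4).
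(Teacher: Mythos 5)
Your proposal is correct and follows essentially the same route as the paper's proof: the same generation argument for (I) via the identity $[D_K(X_i),D_K(X_{i'}^{(a)})]=D_K(X_{i'}^{(a-1)})$ followed by $[L_{-1},L_i]=L_{i-1}$, and the same use of $h>s=2$ and Lemma \ref{lem:W(m;n) formula}(4) for (III). The only (harmless) cosmetic differences are that you verify the key bracket identity by direct computation in $W(2r+1;\bfn)$ rather than via the contact bracket $\langle-,-\rangle$ of \cite[(4.2.10)]{Strade}, and for (II) you deduce nilpotency of $\ad(D_K(X_i))$ from the grading rather than from the identity $\ad(D_K(X_i))^{p^k}=\ad(\partial_{i'})^{p^k}$.
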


\begin{proof}
We  consider another bracket $\langle-,-\rangle$ on $A({2r+1};\bfn)$ defined by
\begin{equation}\label{eq:poisson bracket contact}
\langle X^{(\alpha)},X^{(\beta)}\rangle \Def \{X^{(\alpha)},X^{(\beta)}\}+\Bigl(||\beta||\binom{\alpha+\beta-\epsilon_{2r+1}}{\beta}-||\alpha||\binom{\alpha+\beta-\epsilon_{2r+1}}{\alpha}\Bigl)X^{(\alpha+\beta-\epsilon_{2r+1})},
\end{equation}
where we set
\begin{equation*}
||\alpha||\Def\sum_{j=1}^{2r}\alpha_i+2\alpha_{2r+1}-2.
\end{equation*}
According to \cite[\S4.2 (4.2.10)]{Strade} or \cite[\S4.5,
Proposition 5.2]{sf88}, we have
\[
[D_K(X^{(\alpha)}),D_K(X^{(\beta)})]=D_K(\langle
X^{(\alpha)},X^{(\beta)}\rangle). 
\]
\todo[inline]{Moved this part here as requested by the referee. Could you
check?}

Let us check the condition (I). For any $1\le i\le r$, as $i'=r+i$, we have
\begin{equation*}
[D_K(X_i),D_K(X_{r+i}^{(\alpha)})]=D_K(\langle X_i,X_{i'}^{(\alpha)}\rangle)\overset{(\ref{eq:poisson bracket contact})}{=}D_K(\{X_i,X_{i'}^{(\alpha)}\})=D_K(X_{i'}^{(\alpha-1)})
\end{equation*}
for any $\alpha>0$. Therefore, $L_{-1}$ is contained in the Lie subalgebra generated by $U^{\pm}$. As $L_h\subset U^+$, it follows that $L$ is generated by $U^{\pm}$, hence the condition (I) is satisfied.  

For any $1\le i<j\le r$, we have $j\neq i'$~(cf.\ (\ref{eq:i' and sigma(i)})), hence
\begin{equation*}
[D_K(X_i),D_K(X_j)]=D_K(\langle X_i,X_j\rangle)\overset{(\ref{eq:poisson bracket contact})}{=}D_K(\{X_i,X_j\})=0.
\end{equation*}
Moreover, for any $1\le i\le r$, we have $D_K(X_i)=\partial_{i'}+X_i\partial_{2r+1}$, which immediately implies that
\begin{equation*}
\ad(D_K(X_i))^{p^k}=\ad(\partial_{i'})^{p^k}
\end{equation*}
for any $k>0$. Thus, the condition (II) is fulfilled by the equation (\ref{eq:partial p^n=0}). 

Let us check the last condition (III). However, as one can easily check that $h=h(L)>s(L)=2$, by the same argument as in the proof of Corollary \ref{cor:AC GLA}, we have $\ad(L_h)^2=0$. Thus, it suffices to show that $\ad(D_K(X_j^{(p^{n_j}-1)}))^2=0$ on $L$ for each $r+1\le j\le 2r$. By definition, for each $r+1\le j\le 2r$, we have  $d_j(D_K(X_j^{(p^{n_j}-1)}))\ge p^{n_j}-2$~(cf.\ Lemma \ref{lem:W(m;n) formula}(1)). Therefore, the desired vanishing $\ad(D_K(X_j^{(p^{n_j}-1)}))^2=0$ follows from Lemma \ref{lem:W(m;n) formula}(4), here we need the assumption that $p>3$. Hence, the condition (III) is satisfied. This completes the proof. 
\end{proof}

\begin{rem}
If $L$ is a simple Lie subalgebra of $W(m;\bfn)$, then for any elements $D,E\in L\subseteq W(m;\bfn)$, the condition that
\begin{equation*}
\ad(D)\ad(E)=0
\end{equation*}
as an element of $\Der(W(m;\bfn))$ implies that the same is true as an element of $\Der(L)$. In particular, to check the condition (III) in Theorem \ref{thm:AC FLA} for $L$ with respect to the adjoint representation $\ad:L\lto\Der(L)$, one can freely use Lemma \ref{lem:W(m;n) formula}. 
\end{rem}

\begin{ex}\label{ex:counter-example in char p=3}
In Lemma \ref{lem:AC FLA}, the assumption that $D^2=0$ is not removable. Here,
we offer a counterexample in characteristic $p=3$. Let $k$ be a field of characteristic $p=3$ and $L=K(3;\unit)^{(1)}$ be the Contact algebra. 
Note that
\begin{equation*}
\begin{gathered}
D_K(X_1)=\partial_{2}+X_1\partial_3, \\
D_K(X_2)=-\partial_1+X_2\partial_3,\\
[D_K(X_1),D_K(X_2)]=D_K(1)=2\partial_3.
\end{gathered}
\end{equation*}
As $D_K(X_i)^3=0$ on $A(3;\unit)$ for $i=1,2$, we get the exponential map~(cf.\ Theorem \ref{thm:f_X})
\begin{equation*}
\delta_i:\G_a\lto\GL_{A(3;\unit)}\,;\,t\longmapsto 1+t D_K(X_i)+t^2D_K(X_i)^2/2
\end{equation*}
whose derivation is the natural inclusion $d\delta_i:k=\Lie(\G_a)\lto\fgl_{A(3;\unit)}\,;\, a\longmapsto a D_K(X_i)$.
Let $\Gamma\subset \GL_{A(3;\unit)(1)}$ be the subgroup scheme of height one associated with the restricted Lie subalgebra  $L=K(3;\unit)^{(1)}\subset\Der(A(3;\unit))\subset\fgl_{A(3;\unit)}$. Then the map
\begin{equation*}
\xi:\G_{a,k}\times\Gamma\lto\GL_{A(3;\unit)(1)}\,;\,(t,u)\longmapsto\delta_1(t)^{-1}u\delta_1(t)
\end{equation*}
does not have image in $\Gamma$. Indeed, if it does, we must have
\begin{equation}\label{eq:ex counter-ex in char p=3}
\xi(\{1\}\times\delta_2(\alpha_3))\subseteq\xi(\G_{a,k}\times\Gamma)\subseteq\Gamma. 
\end{equation}
As
\begin{equation*}
\xi(1,\delta_2(v))=\delta_1(1)^{-1}\delta_2(v)\delta_1(1)=1+v\delta_1(1)^{-1}D_K(X_2)\delta_1(1)+v^2(\delta_1(1)^{-1}D_K(X_2)\delta_1(1))^2/2,
\end{equation*} 
the inclusion (\ref{eq:ex counter-ex in char p=3}) amounts to saying that
\begin{equation*}
E\Def \delta_1(1)^{-1}D_K(X_2)\delta_1(1)\in L.
\end{equation*}
However, 
\begin{equation*}
\begin{aligned}
E=&D_K(X_2)-[D_K(X_1),D_K(X_2)]-[D_K(X_1),[D_K(X_1),D_K(X_2)]]\\
~&-D_K(X_1)^2[D_K(X_2),D_K(X_1)]+[D_K(X_1),D_K(X_2)]D_K(X_1)^2+D_K(X_1)^2D_K(X_2)D_K(X_1)^2\\
=&D_K(X_2)-D_K(1)+2D_K(X_1)^2D_K(1)=D_K(X_2)+\partial_3+D_K(X_1)^2\partial_3,
\end{aligned}
\end{equation*}
hence $E\in L$ if and only if $\nabla\Def D_K(X_1)^2\partial_3=(\partial_2+X_1\partial_3)^2\partial_3=\partial_2^2\partial_3-X_1\partial_2\partial_3^2\in L$. The latter condition does not hold. Indeed, 
\begin{equation*}
\nabla(X_2X_3^{(2)})=-X_1\neq 0=X_2\nabla(X_3^{(2)})+X_3^{(2)}\nabla(X_2),
\end{equation*}
hence $\nabla\not\in\Der(A(3;\unit))$, which implies that $\nabla\not\in L$. Thus, the map $\xi$ does not factor through $\Gamma$.  
\end{ex}

\subsection{The non-graded Cartan type simple Lie algebras}\label{subsec:cartan simple}

\subsubsection{The non-graded Special algebras}\label{subsubsec:FLA type S}

We use the same notation as in \S\ref{subsubsec:S(m;n)}. Here, we assume $m\ge
2$~(cf.\ Remark \ref{rem:omega S m=2 0}). 
We introduce two classes of \textit{filtered Special algebras}.  
Namely, we consider the Lie algebras
\begin{equation*}
S(m;\bfn;\Phi(\tau))^{(1)},\quad S(m;\bfn;\Phi(l))~(1\le l\le m),
\end{equation*}
where each of them is a filtered Lie subalgebra of the Witt algebra $W(m;\bfn)$ described in the following way~(cf.~\cite[Theorems 6.3.7 and 6.3.10]{Strade}). 
\begin{equation}\label{eq:S(m;n;tau)}
S(m;\bfn;\Phi(\tau))^{(1)}=\sum_{i=1}^m k\cdot (1-X^{(\bfp^{\bfn}-\unit)})\partial_i+\sum_{i\ge 0}S(m;\bfn)^{(1)}_i.
\end{equation}
\begin{equation}\label{eq:S(m;n;l)}
\begin{aligned}
S(m;\bfn;\Phi(l))
=&\sum_{j\neq l}\sum_{\alpha\le \bfp^{\bfn}-\unit}k\cdot (D_{l,j}(X^{(\alpha)})-X_l^{(p^{n_l}-1)}X^{(\alpha)}\partial_j)\\
&+\sum_{j,k\neq l}\sum_{\alpha\le\bfp^{\bfn}-\unit}k\cdot  D_{j,k}(X^{(\alpha)})+\sum_{j\neq l}k\cdot \bigl(\prod_{i\neq j}X_i^{(p^{n_i}-1)}\bigl)\partial_j.
\end{aligned}
\end{equation}
In both the cases $L=S(m;\bfn;\Phi(\tau))^{(1)}$ and $L=S(m;\bfn;\Phi(l))$, we have 
\begin{equation*}
L_{(h)}=S(m;\bfn)^{(1)}_{(h)}=\sum_{i<j}k\cdot D_{ij}(X^{(\bfp^{\bfn}-\unit)})\quad\text{and}\quad h=h(L)=|\bfp^{\bfn}-\unit|-2.
\end{equation*}
Moreover, they are simple Lie algebras,  
and the associated graded Lie algebra $\gr L$ is naturally embedded as a $\Z$-graded Lie algebra into the graded Special algebra $S(m;\bfn)$ so that
\begin{equation*}
S(m;\bfn)^{(1)}\subset\gr L\subset S(m;\bfn).
\end{equation*}
We call both of them the \textit{Special algebras}.

\begin{lem}\label{lem:FLA type S}
Let $k$ be a field of characteristic $p>3$. 
\begin{enumerate}
\renewcommand{\labelenumi}{(\arabic{enumi})}
\item 
Let $L=S(m;\bfn;\Phi(\tau))^{(1)}$ with $h=h(L)$. Define the subspaces $U^-,U^+\subset L$ to be
\begin{equation*}
\begin{aligned}
U^-&=k\cdot (1-X^{(\bfp^{\bfn}-\unit)})\partial_1,\\
U^+&=\sum_{j=2}^mk\cdot X_1^{(p^{n_1}-2)}\partial_{j}+L_{(h)}.
\end{aligned}
\end{equation*}
Then $U^{\pm}$ satisfy the conditions (I), (II) and (III) in Theorem \ref{thm:AC FLA} with respect to the adjoint representation $\ad:L\lto\Der(L)\subset\fgl_L$.

\item Let $L=S(m;\bfn;\Phi(l))$ with $h=h(L)$. Define the subspaces $U^-,U^+\subset L$ to be
\begin{equation*}
\begin{aligned}
U^-=&\sum_{i\neq l}k\cdot \partial_i,\\
U^+=&\sum_{j\neq l}k\cdot (D_{l,j}\Bigl(\prod_{i\neq l}X_{i}^{(p^{n_{i}}-1)}\Bigl)-X^{(\bfp^{\bfn}-\unit)}\partial_{j})+L_{(h)}.
\end{aligned}
\end{equation*}
Then $U^{\pm}$ satisfy the conditions (I), (II) and (III) in Theorem \ref{thm:AC FLA} with respect to the adjoint representation $\ad:L\lto\Der(L)\subset\fgl_L$. 
\end{enumerate}
\end{lem}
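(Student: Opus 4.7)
The plan is to verify conditions (I), (II), (III) of Theorem \ref{thm:AC FLA} for each of the two cases, with respect to the adjoint representation $\ad\colon L\hookrightarrow\Der(L)\subset\fgl_L$. Conditions (II) and (III) will follow the template of Lemma \ref{lem:K(2r+1;n)} (degree-type computations using Lemma \ref{lem:W(m;n) formula}), while (I) will be reduced to a generation problem in $\gr L$ via Lemma \ref{lem:GLA vs FLA}(1).

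For condition (II): in case (1) the space $U^-$ is one-dimensional so $[U^-,U^-]=0$ holds trivially, and the key point is to show that $D\Def(1-X^{(\bfp^{\bfn}-\unit)})\partial_1$ acts nilpotently on $A(m;\bfn)$. I would verify this by direct iteration: outside $X_1$ the operator $D$ merely lowers the $X_1$-degree, and the extra value $D(X_1)=1-X^{(\bfp^{\bfn}-\unit)}$ is driven to zero by subsequent applications. Since $D^l=0$ on $A(m;\bfn)$ for some $l$, the standard identity then gives $\ad(D)^{2l-1}=0$ on $W(m;\bfn)$, hence on $L\subseteq W(m;\bfn)$. In case (2), $U^-=\bigoplus_{i\neq l}k\,\partial_i$ is abelian since the $\partial_i$'s pairwise commute in $W(m;\bfn)$, and the nilpotency $\ad(\partial_i)^{p^{n_i}}=0$ is precisely (\ref{eq:partial p^n=0}).

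For condition (III): any $\xi\in L_{(h)}$ satisfies $\ad(\xi)(L_{(i)})\subseteq L_{(i+h)}$, hence $\ad(\xi)^2(L)\subseteq L_{(-s(L)+2h)}=0$ as soon as $h>s(L)=1$, a condition comfortably satisfied in both cases for $p>3$ and $m\ge 2$. For the remaining basis elements of $U^+$, Lemma \ref{lem:W(m;n) formula}(4) reduces $\ad(\cdot)^2=0$ to a lower bound on a suitable $d_i$-valuation: in case (1), $d_1(X_1^{(p^{n_1}-2)}\partial_j)=p^{n_1}-2$ for $j\ge 2$; in case (2), $d_j$ of the displayed element is at least $p^{n_j}-2$ by parts (1) and (2) of Lemma \ref{lem:W(m;n) formula}. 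The resulting inequality $2(p^{n_i}-2)-1\ge p^{n_i}$, equivalent to $p^{n_i}\ge 5$, is guaranteed by the hypothesis $p>3$.

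Condition (I) is the most delicate. By Lemma \ref{lem:GLA vs FLA}(1) it suffices to show that the images of $U^\pm$ in $\gr L$ generate $\gr L$ as a Lie algebra. In case (1), a direct dimension count yields $\gr L=S(m;\bfn)^{(1)}$: the image of $U^-$ in $\gr_{-1}L$ is $k\,\partial_1$, while $U^+$ contributes the homogeneous elements $X_1^{(p^{n_1}-2)}\partial_j$ ($j\ge 2$) together with $\gr_hL=S(m;\bfn)^{(1)}_h$. Iteratively bracketing $\partial_1$ against $X_1^{(p^{n_1}-2)}\partial_j$ produces $X_1^{(p^{n_1}-2-r)}\partial_j$ and in particular recovers $\partial_j$ for every $j\ge 2$; together with $\partial_1$ this gives $W(m;\bfn)_{-1}=S(m;\bfn)^{(1)}_{-1}$, and combining with $S(m;\bfn)^{(1)}_h$ and the surjectivity formula (\ref{eq:S bracket 2}), the whole of $\gr L$ is generated. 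In case (2), $U^-$ already contributes $\partial_i$ for $i\neq l$; the leading term in $\gr L$ of the displayed basis element of $U^+$ is $D_{l,j}(\prod_{i\neq l}X_i^{(p^{n_i}-1)})=\prod_{i\neq l,\,i\neq j}X_i^{(p^{n_i}-1)}X_j^{(p^{n_j}-2)}\partial_l$, and iterated bracketing with the $\partial_i$'s ($i\neq l$) progressively strips away the positive-degree factors, eventually producing $\partial_l$; the argument then closes as in case (1). I expect the main obstacle to lie in this generation step, specifically in the identification of $\gr L$ and the bookkeeping of leading terms; a secondary subtlety is the explicit nilpotency verification for condition (II) in case (1).
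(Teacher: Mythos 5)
Your part (1), and conditions (II) and (III) of part (2), are essentially correct and close to the paper's argument: for (II) in case (1) the paper uses the filtration shift $\ad\bigl((1-X^{(\bfp^{\bfn}-\unit)})\partial_1\bigr)^{p^{n_1}}(L_{(r)})\subseteq L_{(r+1)}$ rather than your ``$D^l=0$ on $A(m;\bfn)$, hence $\ad(D)^{2l-1}=0$'', but both are valid, and your $d_i$-valuation bounds for (III) match the paper's.

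The gap is in condition (I) for part (2). Your reduction to a generation statement in $\gr L$ via Lemma \ref{lem:GLA vs FLA}(1) cannot work for $L=S(m;\bfn;\Phi(l))$, because $\gr L$ is \emph{strictly larger} than $S(m;\bfn)^{(1)}$, while every image of $U^{\pm}$ in $\gr L$ lies in the proper Lie subalgebra $S(m;\bfn)^{(1)}$. Concretely, the homogeneous elements $E_j\coloneqq\bigl(\prod_{i\neq j}X_i^{(p^{n_i}-1)}\bigr)\partial_j$ for $j\neq l$ belong to $L$ and are their own leading terms, hence lie in $\gr L$; but $E_j\notin S(m;\bfn)^{(1)}$, since no $D_{a,b}(X^{(\alpha)})$ with $\alpha\le\bfp^{\bfn}-\unit$ can contribute the monomial $X^{(\bfp^{\bfn}-\unit-(p^{n_j}-1)\epsilon_j)}\partial_j$ (it would force some $\alpha_a=p^{n_a}$). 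On the other hand $\overline{U}^-=\sum_{i\neq l}k\,\partial_i$, the leading terms $D_{l,j}\bigl(\prod_{i\neq l}X_i^{(p^{n_i}-1)}\bigr)$ of the $U^+$-generators, and $\gr_hL=\sum_{i<j}k\,D_{i,j}(X^{(\bfp^{\bfn}-\unit)})$ all lie in $S(m;\bfn)^{(1)}$, so the subalgebra of $\gr L$ they generate is contained in $S(m;\bfn)^{(1)}$ and misses the $E_j$. This is exactly why the paper abandons the graded method in case (2) and computes directly in $L$: the point is that $\ad(\partial_j)^{p^{n_j}-1}$ annihilates the leading term $D_{l,j}\bigl(\prod_{i\neq l}X_i^{(p^{n_i}-1)}\bigr)$ (whose $X_j$-degree is only $p^{n_j}-2$) but extracts $-E_j$ from the tail $-X^{(\bfp^{\bfn}-\unit)}\partial_j$; in $\gr L$ the same iterated bracket simply gives $0$, so this element is invisible to your leading-term bookkeeping. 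The paper then produces the lift $\partial_l-X_l^{(p^{n_l}-1)}X_{l'}\partial_{l'}$ of $\partial_l$ and generates the remaining (inhomogeneous) basis elements $D_{l,j}(X^{(\alpha)})-X_l^{(p^{n_l}-1)}X^{(\alpha)}\partial_j$ and $D_{j,k}(X^{(\alpha)})$ by explicit induction on $|\alpha|$; some such direct computation in $L$ is needed to close your argument.
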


\begin{proof}
(1) As $U^-$ is one dimensional, obviously we have $[U^-,U^-]=0$. 
Let $U\Def \sum_{j=2}^mk\cdot X_1^{(p^{n_1}-2)}\partial_{j}$. Then $U^+=U+L_{(h)}$. From the description (\ref{eq:S(m;n;tau)}), it follows that the associated graded Lie algebra $\gr L$ is isomorphic to $S(m;\bfn)^{(1)}$. Then $U^-$ and $U$ are lifts of the subspaces $V^-\Def k\cdot\partial_1\subset \gr_{-1} L=S(m;\bfn)^{(1)}_{-1}$ and $V\Def\sum_{j=2}^mk\cdot X_1^{(p^{n_1}-2)}\partial_{j}\subseteq \gr_{p^{n_1}-3}L=S(m;\bfn)^{(1)}_{p^{n_1}-3}$ respectively. By using (\ref{eq:witt bracket}), one can see $V^-$ and $V$ generate $S(m;\bfn)^{(1)}_{-1}=\gr_{-1}L$. Thus, by the equation (\ref{eq:S bracket 2}), it follows that  $V^{-}$ and $V+\gr_{h}L$ generate the graded Lie algebra $\gr L$. Since $V^-$ and $V+\gr_h L$ are the images of $U^-$ and $U^+=U+L_{(h)}$ respectively, by Lemma \ref{lem:GLA vs FLA}(1), we can conclude that $U^{\pm}$ generate $L$, hence the condition (I) is fulfilled. 

On the other hand, as $X^{(\bfp^{\bfn}-\unit)}\partial_1\in L_{(h)}\subset L_{(p^{n_1})}$, the equation (\ref{eq:partial p^n=0}) for $i=1$ implies that
\begin{equation*}
\ad((1-X^{(\bfp^{\bfn}-\unit)})\partial_1)^{p^{n_1}}(L_{(r)})\subseteq L_{(r+1)}
\end{equation*}
for all $r\ge -1$. Thus it follows that $\ad((1-X^{(\bfp^{\bfn}-\unit)})\partial_1)^{l}=0$ for sufficiently large $l>0$, hence the condition (II) is satisfied.

It remains to check the condition (III). Let $0\neq D\in U^+$ be a non-zero element. Then $d_i(D)\ge p^{n_1}-2$. Therefore, by Lemma \ref{lem:W(m;n) formula}(4), we have $\ad(D)^2=0$. This implies that $U^+$ satisfies the condition (III). This completes the proof.

(2) Fix $l'\neq l$. Let $L'$ be the Lie subalgebra of $L$ generated by $U^+$ and $U^-$. Note that, by (\ref{eq:witt bracket}) and (\ref{eq:S bracket}), we have
\begin{equation*}
\partial_l-X_l^{(p^{n_l}-1)}X_{l'}\partial_{l'}=\Bigl(\ad(\partial_{l'})^{p^{n_{l'}}-2}\cdot\prod_{i\neq l,l'}\ad(\partial_i)^{p^{n_i}-1}\Bigl)\Bigl(D_{l,l'}\Bigl(\prod_{j\neq l}X_{j}^{(p^{n_{j}}-1)}\Bigl)-X^{(\bfp^{\bfn}-\unit)}\partial_{l'}\Bigl)\in L'. 
\end{equation*}
Thus, it suffices to show that $U^-_0\Def U^-+k\cdot (\partial_l-X_l^{(p^{n_l}-1)}X_{l'}\partial_{l'})\subset L'$ and $U^+$ generates $L$. First let us show that
\begin{equation*}
D_{j,k}(X^{(\alpha)})\in L'
\end{equation*}
for any $j,k\neq l$ and any $\alpha<\bfp^{\bfn}-\unit$. As $j,k\neq l$, we have $D_{j,k}(X^{(\alpha)})=X_l^{(\alpha_l)}D_{j,k}(\prod_{i\neq l}X_i^{(\alpha_i)})$. Hence, 
\begin{equation*}
D_{j,k}(X^{(\alpha)})=
\begin{cases}
[\partial_l-X_l^{(p^{n_l}-1)}X_{l'}\partial_{l'},D_{j,k}(X^{(\alpha+\epsilon_l)})]&\text{if $\alpha_l<p^{n_l}-1$},\\
[\partial_i,D_{j,k}(X^{(\alpha+\epsilon_i)})] &\text{if $\alpha_i<p^{n_i}-1$ for $i\neq l$}.
\end{cases}
\end{equation*}
By induction on $|\alpha|$, this implies that for any $j,k\neq l$ and any $\alpha<\bfp^{\bfn}-\unit$, we have $D_{j,k}(X^{(\alpha)})\in L'$. 
Next let us show that
\begin{equation*}
\bigl(\prod_{i\neq j}X_i^{(p^{n_i}-1)}\bigl)\partial_j\in L'
\end{equation*}
for $j\neq l$. However, as $D_{l,j}\Bigl(\prod_{i\neq l}X_{i}^{(p^{n_{i}}-1)}\Bigl)-X^{(\bfp^{\bfn}-\unit)}\partial_{j}\in U^+$ and $\partial_j\in U^-$, we have
\begin{equation*}
\bigl(\prod_{i\neq j}X_i^{(p^{n_i}-1)}\bigl)\partial_j=-\ad(\partial_j)^{p^{n_j}-1}\Bigl(D_{l,j}\Bigl(\prod_{i\neq l}X_{i}^{(p^{n_{i}}-1)}\Bigl)-X^{(\bfp^{\bfn}-\unit)}\partial_{j}\Bigl)\in L'. 
\end{equation*}
Finally, let us show that
\begin{equation*}
D_{l,j}(X^{(\alpha)})-X_l^{(p^{n_l}-1)}X^{(\alpha)}\partial_{j}\in L'
\end{equation*}
for $j\neq l$ and for any $\alpha<\bfp^{\bfn}-\unit$. However, if $\alpha_l>0$, then $D_{l,j}(X^{(\alpha)})-X_l^{(p^{n_l}-1)}X^{(\alpha)}\partial_{j}
=D_{l,j}(X^{(\alpha)})$. Thus, we have
\begin{equation*}
D_{l,j}(X^{(\alpha)})-X_l^{(p^{n_l}-1)}X^{(\alpha)}\partial_{j}=\ad(\partial_{l}-X_{l}^{p^{n_l}-1}X_{l'}\partial_{l'})^{p^{n_l}-1-\alpha_l}\prod_{i\neq l}\ad(\partial_i)^{p^{n_i}-1-\alpha_i}(D_{l,j}(X^{(\bfp^{\bfn}-\unit)}))\in L'.
\end{equation*}
It remains to handle the case where $\alpha_l=0$. However, in this case, we have 
\begin{equation*}
D_{l,j}(X^{(\alpha)})-X_l^{(p^{n_l}-1)}X^{(\alpha)}\partial_{j}=\prod_{i\neq l}\ad(\partial_i)^{p^{n_i}-1-\alpha_i}\Bigl(D_{l,j}\Bigl(\prod_{i\neq l}X_{i}^{(p^{n_{i}}-1)}\Bigl)-X^{(\bfp^{\bfn}-\unit)}\partial_{j}\Bigl)\in L'
\end{equation*}
Therefore, the condition (I) is fulfilled. 

The second condition (II) follows from (\ref{eq:partial p^n=0}). Let $D$ be one of the following elements of $U^+$,
\begin{equation*}
(D_{l,j}\Bigl(\prod_{i\neq l}X_{i}^{(p^{n_{i}}-1)}\Bigl)-X^{(\bfp^{\bfn}-\unit)}\partial_j)~(j\neq l),~D_{i,j}(X^{(\bfp^{\bfn}-\unit)})~(1\le i<j\le m).
\end{equation*}
Then, as $m\ge 3$, $d_i(D)\ge p^{n_i}-2$ for some $1\le i\le m$. Thus, by Lemma \ref{lem:W(m;n) formula}(4), we get $\ad(D)^2=0$. This completes the proof. 
\end{proof}

\subsubsection{The non-graded Hamiltonian algebras}\label{subsubsec:FLA type H}

Next we handle two classes of filtered deformations of the graded Hamiltonian algebras $H(2r;\bfn)^{(i)}~(i\ge 0)$. 
The degree $2r=2$ case has been already discussed in \S\ref{subsubsec:FLA type S}~(cf.\ Remark  \ref{rem:omega S m=2 0}
). Thus, let us consider the case where $2r\ge 4$. 
Let $A((2r))$ be the completion of $A(2r)$~(cf.\ (\ref{eq:PD poly})) with respect to the filtration $\{A(2r)_{(j)}\}_{j=0}^{\infty}$, where $A(2r)_{(j)}\Def\sum_{|\alpha|\ge j}k\cdot X^{(\alpha)}$.
Let $W((2r))\Def\sum_{j=1}^{2r}A((2r))\partial_j$ and $\Omega^1((2r))\Def\Hom_{A((2r))}(W((2r)),A((2r)))$. Moreover, we set $\Omega^i((2r))\Def\wedge^i\Omega^1((2r))$ for $i>1$.

\begin{definition}(cf.\ \cite[Definition 6.4.1]{Strade}) A form $\omega=\sum_{i,j=1}^{2r}\omega_{ij}dX_i\wedge dX_j\in\Omega^2((2r))$ with $\omega_{ij}=-\omega_{ji}$ is called a \textit{Hamiltonian form} if $d\omega=0$ and $\Det(\omega_{ij})\in A((2r))^*$. 
\end{definition}

For a Hamiltonian form $\omega$, we set
\begin{equation*}
H((2r;\omega))\Def\{D\in W((2r))\,|\,D(\omega)=0\},
\end{equation*}
and, for $\bfn\in\Z_{\ge 1}^{2r}$,
\begin{equation}\label{eq:H(2r;n;omega)}
H(2r;\bfn;\omega)\Def H((2r;\omega))\cap W(2r;\bfn). 
\end{equation}
Note that if one chooses $\omega=\sum_{i=1}^{2r}\sigma(i)dX_i\wedge dX_{i'}=2\omega_H$~(cf.\ (\ref{eq:omega_H})), then $H(2r;\bfn;\omega)^{(\infty)}\simeq H(2r;\bfn)^{(2)}$~(cf.\ \S\ref{subsubsec:H(2r;n)}).

\begin{definition}(cf.\ \cite[\S6.5]{Strade})\label{def:Hamiltonian forms}~
\begin{enumerate}
\renewcommand{\labelenumi}{(\arabic{enumi})}
\item For a matrix $\alpha=(\alpha_{ij})\in M_{2r}(k)$ with $\alpha_{ij}=-\alpha_{ji}$ and $\bfn\in\Z^{2r}_{\ge 1}$, we define a Hamiltonian form $\omega(\alpha)=\omega(\alpha;\bfn)\in\Omega^2(2r;\bfn)$ to be
\begin{equation*}
\omega(\alpha)\Def\sum_{i=1}^{2r}\sigma(i)dX_i\wedge dX_{i'}+\sum_{i,j=1}^{2r}\alpha_{ij}X_i^{(p^{n_i}-1)}X_j^{(p^{n_j}-1)}dX_i\wedge dX_j.
\end{equation*}
Then the associated filtered simple Lie algebra $H(2r;\bfn;\omega(\alpha))^{(\infty)}$~(cf.\ (\ref{eq:H(2r;n;omega)})) is called a \textit{Hamiltonian algebra of first type}. 
\item For $1\le l\le 2r$ and $\bfn\in\Z_{\ge 1}^{2r}$, we define a Hamiltonian form $\omega_{H,l}=\omega_{H,l}(\bfn)\in\Omega^2((2r))\setminus\Omega^2(2r;\bfn)$ to be
\begin{equation*}
\omega_{H,l}\Def d(\exp(X_l^{(p^{n_l})})\sum_{j=1}^{2r}\sigma(j)X_jdX_{j'}).
\end{equation*}
Then the associated filtered simple Lie algebra $H(2r;\bfn;\omega_{H,l})^{(\infty)}$~(cf.\ (\ref{eq:H(2r;n;omega)})) is called a \textit{Hamiltonian algebra of second type}. 
\end{enumerate}
\end{definition}

Let us recall more concrete descriptions of them, which are given in \cite[Theorems 6.5.7 and 6.5.8]{Strade}. For a Hamiltonian form $\omega=\sum_{i,j=1}^{2r}\omega_{ij}dX_i\wedge dX_j\in\Omega^2((2r))$, let $(g_{\ij})\Def (\omega_{ij})^{-1}\in \GL_{2r}(A((2r)))$. Let us define the $k$-linear map 
\begin{equation*}
D_{H,\omega}:A((2r))\lto W((2r))
\end{equation*}
to be
\begin{equation}\label{eq:D H omega}
D_{H,\omega}(f)\Def-\sum_{i,j=1}^{2r}g_{ij}\partial_i(f)\partial_j. 
\end{equation}
Then for any $f,h\in A((2r))$, we have
\begin{equation}\label{eq:Lie bracket D H omega}
[D_{H,\omega}(f),D_{H,\omega}(h)]=D_{H,\omega}(D_{H,\omega}(f)(h))
\end{equation}
(cf.\ \cite[(6.5.5)]{Strade}). Note that $\Ker(D_{H,\omega})=k\cdot 1$, and we have $H((2r;\omega))=D_{H,\omega}(A((2r)))$~(cf.\ \cite[Theorem 6.5.6]{Strade}). In fact, for any $f,h\in A((2r))$, if one set
\begin{equation}\label{eq:poisson omega}
\{f,h\}_{\omega}\Def -D_{H,\omega}(f)(h)=\sum_{i,j=1}^{2r}g_{ij}\partial_i(f)\partial_j(h),
\end{equation}
then $\{~,~\}_{\omega}$ defines a Poisson bracket on $A((2r))$, for which $k\cdot 1=\Ker(D_{H,\omega})$ is the center of $A((2r))$, and the map $D_{H,\omega}$ induces an isomorphism of infinite dimensional Lie algebras
\begin{equation}\label{eq:A(2r) poisson}
(A((2r)), \{~,~\}_{\omega})/k\cdot 1\xrightarrow{~\simeq~}H((2r;\omega)).
\end{equation}
The description in terms of Poisson brackets holds similarly for the truncated Hamiltonian algebras $H(2r;\bfn;\omega)$.

Let us apply the description (\ref{eq:A(2r) poisson}) to the Hamiltonian algebra $H(2r;\bfn;\omega(\alpha))^{(\infty)}$ of first type~(cf.\ Definition \ref{def:Hamiltonian forms}(1)). In fact, according to \cite[Theorem 6.5.7]{Strade}, one can have
\begin{equation}\label{eq:H(2r;n;alpha)}
H(2r;\bfn;\omega(\alpha))^{(\infty)}=H(2r;\bfn;\omega(\alpha))^{(1)}=
\begin{cases}
\sum_{0<\beta\le\bfp^{\bfn}-\unit}k\cdot D_{H,\omega(\alpha)}(X^{(\beta)})&\text{if $\Det(\alpha)\neq 0$},\\
\sum_{0<\beta<\bfp^{\bfn}-\unit}k\cdot D_{H,\omega(\alpha)}(X^{(\beta)})&\text{if $\Det(\alpha)=0$}.
\end{cases}
\end{equation} 
Hence, the associated graded Lie algebra can be described by 
\begin{equation}\label{eq:gr H(2r;n;alpha)}
\gr (H(2r;\bfn;\omega(\alpha))^{(1)})\simeq
\begin{cases}
\sum_{0<\beta\le\bfp^{\bfn}-\unit}k\cdot D_{H}(X^{(\beta)})&\text{if $\Det(\alpha)\neq 0$},\\
\sum_{0<\beta<\bfp^{\bfn}-\unit}k\cdot D_{H}(X^{(\beta)})&\text{if $\Det(\alpha)=0$}.
\end{cases}
\end{equation} 
By (\ref{eq:D H omega}), the description of each $D_{H,\omega(\alpha)}(X^{(\beta)})$ is given by the inverse matrix $(g_{ij})$ of
\begin{equation*}
\omega(\alpha)=
\begin{pmatrix}
O&E_r\\
-E_r&O
\end{pmatrix}
+(\alpha_{ij}X_i^{(p^{n_i}-1)}X_j^{(p^{n_j}-1)})_{i,j=1}^{2r}. 
\end{equation*}
Thus, we have
\begin{equation}\label{eq:D H alpha}
D_{H,\omega(\alpha)}(X^{(\beta)})=D_H(X^{(\beta)})+\sum_{i,j=1}^{2r}X^{(\beta-\epsilon_i)}f_{ij}\partial_j,
\end{equation}
where each $f_{ij}$ is a linear combination of $X^{(\gamma)}$ with $\gamma_q=p^{n_q}-1$ for at least two indices $1\le q\le 2r$~(cf.\ \cite[(6.5.7)]{Strade}). Therefore,
\begin{equation}\label{eq:d_i D H alpha}
d_i(D_{H,\omega(\alpha)}(X^{(\beta)}))\ge \beta_i-2.
\end{equation} 
Furthermore, we have the following. 

\begin{lem}\label{lem:D H alpha p^n=0} 
For any $1\le i\le 2r$, we have 
$\ad(D_{H,\omega(\alpha)}(X_i))^{l}=0$ for $l\gg 0$.
\end{lem}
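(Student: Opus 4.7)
My plan is to translate the claim to a statement about nilpotence of a single derivation on the finite-dimensional algebra $A(2r;\bfn)$, using the Poisson bracket description of the Hamiltonian algebra, and then to prove that nilpotence via the explicit Darboux-type decomposition of $D_{H,\omega(\alpha)}(X_i)$ together with the fact that $\partial_{i'}$ itself is nilpotent on $A(2r;\bfn)$.

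First, I set $D := D_{H,\omega(\alpha)}(X_i)$ and view $D$ both as an element of $L$ (acting on $L$ by $\ad$) and as a $k$-linear derivation of $A(2r;\bfn)$. From the bracket identity (\ref{eq:Lie bracket D H omega}), applied iteratively, for every $f \in A(2r;\bfn)$ one has
\[
\ad(D)^l\bigl(D_{H,\omega(\alpha)}(f)\bigr) \;=\; D_{H,\omega(\alpha)}\bigl(D^l(f)\bigr),
\]
where on the right-hand side $D^l$ means the $l$-fold composition of $D$ as an endomorphism of $A(2r;\bfn)$. Combined with the isomorphism (\ref{eq:A(2r) poisson}), which identifies $L$ with $A(2r;\bfn)/k\cdot 1$ under $D_{H,\omega(\alpha)}$ (up to the possible removal of the top-degree piece when $\det\alpha = 0$, cf.\ (\ref{eq:H(2r;n;alpha)})), the desired claim reduces to showing that $D$ is nilpotent as a $k$-linear endomorphism of $A(2r;\bfn)$.

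Second, from (\ref{eq:D H alpha}) applied with $\beta=\epsilon_i$, I decompose
\[
D \;=\; \sigma(i)\partial_{i'} \;+\; R, \qquad R \;=\; \sum_{j=1}^{2r} f_{ij}\partial_j,
\]
where each $f_{ij}$ is a $k$-linear combination of $X^{(\gamma)}$ with $\gamma_q = p^{n_q}-1$ for at least two indices $q$. In particular, as operators on $A(2r;\bfn)$, the map $\sigma(i)\partial_{i'}$ satisfies $(\sigma(i)\partial_{i'})^{p^{n_{i'}}}=0$ (the identity (\ref{eq:partial p^n=0}) in operator form), while multiplication by any summand of $R$ strictly raises the total degree of $X^{(\alpha)}$ by at least $p^{n_{q_1}}+p^{n_{q_2}}-3 \ge 1$.

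Third, I expand $D^l = (\sigma(i)\partial_{i'}+R)^l$ as a sum over ordered words in the two operators $\sigma(i)\partial_{i'}$ and $R$. Words consisting entirely of $\sigma(i)\partial_{i'}$'s vanish once $l\ge p^{n_{i'}}$. For the remaining words, containing at least one $R$, I use the elementary commutator identity $[\partial_{i'},f_{ij}\partial_j] = \partial_{i'}(f_{ij})\partial_j$, iterated, to move all $R$-factors past the $\partial_{i'}$-factors at the cost of replacing each $R$ by a sum of operators of the form $g\,\partial_j$, where $g$ is again a linear combination of divided powers $X^{(\gamma')}$ with controlled total degree. Crucially, applying these normal-form operators to any $X^{(\alpha)}\in A(2r;\bfn)$ increases the total degree by a strictly positive amount, whereas $A(2r;\bfn)$ is supported in total degrees $\le|\bfp^\bfn-\unit|$ (cf.\ (\ref{eq:tau(n)})). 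Taking $l$ sufficiently large, every surviving word in the expansion of $D^l$ is forced to either incur the factor $\partial_{i'}^{p^{n_{i'}}}=0$ or to land in total degrees $>|\bfp^\bfn-\unit|$, i.e., in $0$. This yields $D^l=0$ and concludes the proof.

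The main obstacle is the combinatorial bookkeeping in the third step: occurrences of $\partial_{i'}$ and $R$ need not be consecutive, so simply invoking $\partial_{i'}^{p^{n_{i'}}}=0$ and the filtration-raising behaviour of $R$ is not directly enough. One really needs the iterated commutators $[\partial_{i'},\cdot]$ applied to $R$ to still take values in a subspace of $W(2r;\bfn)$ whose underlying coefficients are divided powers of strictly positive total degree. This is ensured by the explicit form of $f_{ij}$ dictated by $(\omega_{ij})^{-1}$ together with the assumption $p>2$, so that even after repeated commutation the correction terms remain in the high-degree piece of $A(2r;\bfn)$.
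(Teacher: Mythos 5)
Your reduction and set-up coincide with the paper's: both proofs use \eqref{eq:Lie bracket D H omega} to replace $\ad(D_{H,\omega(\alpha)}(X_i))^l$ by the $l$-fold iterate of $D:=D_{H,\omega(\alpha)}(X_i)$ on $A(2r;\bfn)$, and both exploit the decomposition $D=\sigma(i)\partial_{i'}+\sum_j f_{ij}\partial_j$ from \eqref{eq:D H alpha} together with the fact that each $f_{ij}$ carries the full divided power $X_q^{(p^{n_q}-1)}$ in at least two variables. Where you diverge is in the degree bookkeeping, and this is exactly where your write-up is soft. You track the \emph{total} degree, which is not monotone under $\partial_{i'}$; consequently your claim that every normal-form word ``increases the total degree by a strictly positive amount'' ignores the surviving tail $\partial_{i'}^{c}$ (which lowers total degree by $c$), and to close the argument you would additionally need the bound $c\le p^{n_{i'}}-1$ together with a count showing that a nonvanishing word of length $l$ must contain at least roughly $l/p^{n_{i'}}-1$ factors of type $g\,\partial_j$ (each $f_{ij}$ can absorb at most $p^{n_{i'}}-1$ copies of $\partial_{i'}$ before dying). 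This can all be made to work, but the paper avoids the entire normal-ordering discussion by tracking the quantity $\sum_{j\neq i'}\beta_j$ instead: it is \emph{preserved} by $\partial_{i'}$ and, since $\partial_{i'}^m(f_{ij})$ still contains a full divided power in some variable $q\neq i'$, it is strictly increased by each $f_{ij}\partial_j$ when $p\ge 3$. Hence $D^{p^{n_{i'}}}$ strictly increases this bounded invariant, giving $D^l=0$ for $l>p^{n_{i'}}\sum_{j\neq i'}(p^{n_j}-1)$ with no word-by-word commutation. I would recommend switching to that invariant; it turns your ``main obstacle'' into a one-line observation.
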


\begin{proof}
Thanks to the equation (\ref{eq:Lie bracket D H omega}), we have
\begin{equation*}
\ad(D_{H,\omega(\alpha)}(X_i))^{l}(D_{H,\omega(\alpha)}(g))=D_{H,\omega(\alpha)}(D_{H,\omega(\alpha)}(X_i)^{l}(g))
\end{equation*}
for $g\in A(2r;\bfn)$ and $l>0$. Therefore, it suffices to show that $D_{H,\omega(\alpha)}(X_i)^{l}=0$ on $A(2r;\bfn)$ for sufficiently large $l\gg 0$. By the equation (\ref{eq:D H alpha}), we have
\begin{equation*}
D_{H,\omega(\alpha)}(X_i)=\sigma(i)\partial_{i'}+\sum_{j=1}^{2r}f_{ij}\partial_{j},
\end{equation*}
where each $f_{ij}$ is a linear combination of $X^{(\gamma)}$ with $\gamma_q=p^{n_q}-1$ for at least two indices $1\le q\le 2r$. On the other hand, for any integer $k\ge 1$,
\begin{equation*}
\partial_{i'}^k\circ (f_{ij}\partial_j)=\sum_{m=0}^k\binom{k}{m}\partial_{i'}^m(f_{ij})\partial_{i'}^{k-m}\partial_j
\end{equation*}
in $\End(A(2r;\bfn))$, and each coefficient $\partial_{i'}^m(f_{ij})$ is a linear combination of $X^{(\gamma)}$ with $\gamma_q=p^{n_q}-1$ for some $q\neq i'$. As $\partial_{i'}^{p^{n_{i'}}}=0$ on $A(2r;\bfn)$, it follows that for any index $\beta\in\N^{2r}$, the element $D_{H,\omega(\alpha)}(X_i)^{p^{n_{i'}}}(X^{(\beta)})$ is a linear combination of $X^{(\beta')}$ with $\sum_{j\neq i'}\beta'_j>\sum_{j\neq i'}\beta_j$. This implies that $D_{H,\omega(\alpha)}(X_i)^{l}=0$ on $A(2r;\bfn)$ for $l>p^{n_{i'}}\sum_{j\neq i'}(p^{n_j}-1)$. This completes the proof.  
\end{proof}

\begin{lem}\label{lem:H(2r;n;alpha)}
Let $k$ be a field of characteristic $p>3$. Let $L\Def H(2r;\bfn;\omega(\alpha))^{(1)}$ with height $h\Def h(L)$. Fix $1\le l\le 2r$ such that $n_{l'}=\max\{n_{i}\,|\,1\le i\le 2r\}$. We define the subspaces $U^+,U^-\subset L$ to be
\begin{equation*}
\begin{aligned}
U^-=&k\cdot D_{H,\omega(\alpha)}(X_l),\\
U^+=&\sum_{j=1,j\neq l,l'}^{2r}k\cdot D_{H,\omega(\alpha)}\bigl(X_{l'}^{(p^{n_{l'}}-1)}X_j\bigl)+L_{(h)}.
\end{aligned}
\end{equation*}
If $p=5$, we assume that $\bfn\neq\unit$. Then the subspaces $U^{\pm}$ satisfy the conditions (I), (II) and (III) in Theorem \ref{thm:AC FLA} with respect to the adjoint representation $\ad:L\lto\Der(L)\subset\fgl_L$.
\end{lem}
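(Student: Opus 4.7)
The proof plan is to verify the three hypotheses of Theorem~\ref{thm:AC FLA} for $U^\pm$ with respect to the adjoint representation $\ad\colon L\hookrightarrow\fgl_L$.

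Condition (II) will follow immediately: $U^-$ is one-dimensional so $[U^-,U^-]=0$, and the $\ad$-nilpotency of $D_{H,\omega(\alpha)}(X_l)$ is furnished by Lemma~\ref{lem:D H alpha p^n=0}. For condition (III), a basis of $U^+$ is given by the elements $D_{H,\omega(\alpha)}(X_{l'}^{(p^{n_{l'}}-1)}X_j)$ for $j\neq l,l'$ together with a basis of $L_{(h)}$. Since $s(L)=1$ and $h\ge 2$, the argument used in the proof of Corollary~\ref{cor:AC GLA} shows $\ad(D)^2=0$ for every $D\in L_{(h)}$. For the basis vectors $D=D_{H,\omega(\alpha)}(X_{l'}^{(p^{n_{l'}}-1)}X_j)$, the estimate~(\ref{eq:d_i D H alpha}) gives $d_{l'}(D)\ge p^{n_{l'}}-3$, and Lemma~\ref{lem:W(m;n) formula}(4) then yields $\ad(D)^2=0$ whenever $2(p^{n_{l'}}-3)-1\ge p^{n_{l'}}$, i.e.\ $p^{n_{l'}}\ge 7$. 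This inequality holds whenever $p>5$, and also when $p=5$ and $\bfn\neq\unit$: the choice of $l$ forces $n_{l'}=\max_i n_i\ge 2$, so $p^{n_{l'}}\ge 25$. This is precisely why the case $p=5,\bfn=\unit$ is excluded from the hypothesis.

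The substantive part is condition (I), which I propose to verify by applying Lemma~\ref{lem:GLA vs FLA}(1) and showing that the images $\bar U^\pm$ generate $\gr L$; by~(\ref{eq:gr H(2r;n;alpha)}) this graded algebra is either $H(2r;\bfn)^{(1)}$ or $H(2r;\bfn)^{(2)}$. Working in the Poisson model via $[D_H(f),D_H(g)]=D_H(\{f,g\})$, the identity $\{X_l,X_{l'}^{(a)}X_j\}=\sigma(l)X_{l'}^{(a-1)}X_j$ allows one to descend in the divided-power exponent: iterating $\ad(D_H(X_l))$ on $D_H(X_{l'}^{(p^{n_{l'}}-1)}X_j)$ eventually produces a nonzero multiple of $D_H(X_j)=\sigma(j)\partial_{j'}$ for every $j\neq l,l'$, and combined with $D_H(X_l)=\sigma(l)\partial_{l'}$ this yields $\partial_i$ for all $i\neq l$. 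Next, the bracket $[\partial_j,D_H(X_{l'}^{(p^{n_{l'}}-1)}X_j)]=D_H(X_{l'}^{(p^{n_{l'}}-1)})$ exhibits $D_H(X_{l'}^{(p^{n_{l'}}-1)})$, and iterating $\ad(D_H(X_l))$ on it via $\{X_{l'}^{(a)},X_l\}=\sigma(l')X_{l'}^{(a-1)}$ ultimately produces $D_H(X_{l'})=\sigma(l')\partial_l$. With all of $\gr_{-1}(\gr L)$ thereby in the generated subalgebra and $\gr_h L\subseteq \bar U^+$, repeated use of~(\ref{eq:H bracket 2}) will descend from the top through every graded component of $\gr L$.

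The main obstacle I foresee is the extraction of $\partial_l$: the direct brackets between the initial generators produce only $\partial_i$ for $i\neq l$, and $\partial_l$ must be obtained indirectly, first by peeling off the linear factor $X_j$ from $X_{l'}^{(p^{n_{l'}}-1)}X_j$ using a $\partial_j$-bracket to isolate the pure divided power $X_{l'}^{(p^{n_{l'}}-1)}$, and then by descending in its exponent via iterated brackets with $D_H(X_l)$. The maximality choice $n_{l'}=\max_i n_i$ is tuned precisely so that the same index $l$ serves both this descent in condition (I) and the inequality $p^{n_{l'}}\ge 7$ needed in condition (III).
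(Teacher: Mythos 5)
Your proposal is correct and follows essentially the same route as the paper: condition (I) via Lemma~\ref{lem:GLA vs FLA}(1) and the bracket formulas (\ref{eq:H bracket})--(\ref{eq:H bracket 2}) in $\gr L$, condition (II) via Lemma~\ref{lem:D H alpha p^n=0}, and condition (III) via the estimate (\ref{eq:d_i D H alpha}) and Lemma~\ref{lem:W(m;n) formula}(4), with the same identification of $(p,\bfn)=(5,\unit)$ as the excluded case. The only cosmetic difference is that for $D\in L_{(h)}$ you invoke the filtration-degree argument $2h-s>h$, whereas the paper applies the $d_j$-estimate to the explicit top elements $D_{H,\omega(\alpha)}(X^{(\bfp^{\bfn}-\unit)})$, resp.\ $D_{H,\omega(\alpha)}(X^{(\bfp^{\bfn}-\unit-\epsilon_i)})$; both are valid.
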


\begin{proof}
Let $U\Def \sum_{j=1,j\neq l,l'}^{2r}k\cdot D_{H,\omega(\alpha)}\bigl(X_{l'}^{(p^{n_{l'}}-1)}X_j\bigl)$. Then $U^-$ and $U$ are lifts of the subspaces $V^-\Def k\cdot D_H(X_l)\subset \gr_{-1} L$ and $V\Def\sum_{j=1,j\neq l,l'}^{2r}k\cdot D_{H}\bigl(X_{l'}^{(p^{n_{l'}}-1)}X_j\bigl)\subseteq \gr_{p^{n_{l'}}-2}L$ respectively. By using (\ref{eq:H bracket}), one can see $V^-$ and $V$ generate $\sum_{j=1}^{2r}k\cdot D_H(X_j)=\gr_{-1}L$. Thus, by the equation (\ref{eq:H bracket 2}), it follows that $V^-$ and $V+\gr_h L$ generate $\gr L$. Therefore, by Lemma \ref{lem:GLA vs FLA}(1), we can conclude that $U^{-}$ and $U^+=U+L_{(h)}$ generate $L$, hence the condition (I) is fulfilled. The condition (II) follows from Lemma \ref{lem:D H alpha p^n=0}. 

It remains to check the condition (III). If one takes
\begin{equation*}
D=
\begin{cases}
D_{H,\omega(\alpha)}(X^{(\bfp^{\bfn}-\unit)})&\text{if $\Det(\alpha)\neq 0$},\\
D_{H,\omega(\alpha)}(X^{(\bfp^{\bfn}-\unit-\epsilon_i)})&\text{if $\Det(\alpha)=0$},
\end{cases}
\end{equation*}
then, by (\ref{eq:d_i D H alpha}), $d_j(D)\ge p^{n_j}-2$ for some $1\le j\le 2r$. Thus, Lemma \ref{lem:W(m;n) formula}(4) implies that $\ad(D)^2=0$. If $D=D_{H,\omega(\alpha)}\bigl(X_{l'}^{(p^{n_{l'}}-1)}X_j\bigl)\in U^+$, then $d_{l'}(D)\ge p^{n_{l'}}-3$. Therefore, again by Lemma \ref{lem:W(m;n) formula}(4), we have $\ad(D)^2=0$ unless $p^{n_{l'}}<6$. As $n_{l'}=\max\{n_i\,|\,1\le i\le 2r\,\}$, the inequality $p^{n_{l'}}<6$ only happens when $(p,\bfn)=(5,\unit)$. This completes the proof. 
\end{proof}

Let us consider the Hamiltonian algebra $H(2r;\bfn;\omega_{H,l})^{(\infty)}$ of second type~(cf.\ Definition \ref{def:Hamiltonian forms}(2)), in which case the corresponding Poisson bracket is a little bit harder to describe. For any $f\in A(2r;\bfn)$, we set
\begin{equation*}
D_{H,l}(f)\Def D_H(f)+\frac{\sigma(l)}{2}X_l^{(p^{n_l}-1)}(2f\partial_{l'}+\partial_{l'}(f)\sum_{j=1}^{2r}X_j\partial_j-\sum_{j=1}^{2r}X_j\partial_{j}(f)\partial_{l'})
\end{equation*}
(cf.\ \cite[(6.5.8)]{Strade}), where $D_H(f)=\sum_{i=1}^{2r}\sigma(i)\partial_{i}(f)\partial_{i'}$~(cf.\ \S\ref{subsubsec:H(2r;n)}). Then, according to \cite[Theorem 6.5.8]{Strade}, the Hamiltonian algebra $H(2r;\bfn;\omega_{H,l})^{(\infty)}$ can be described by
\begin{equation}\label{eq:H(2r;n;l)}
H(2r;\bfn;\omega_{H,l})^{(\infty)}=H(2r;\bfn;\omega_{H,l})^{(1)}=
\begin{cases}
\sum_{\alpha\le\bfp^{\bfn}-\unit}k\cdot D_{H,l}(X^{(\alpha)})&\text{if $r+1\not\equiv 0$ mod $p$},\\
\sum_{\alpha<\bfp^{\bfn}-\unit}k\cdot D_{H,l}(X^{(\alpha)})&\text{if $r+1\equiv 0$ mod $p$.}
\end{cases}
\end{equation} 
Note that for any $\alpha\in\N^{2r}$ and $1\le i\le 2r$, we have
\begin{equation}\label{eq:d_i D H l}
d_i(D_{H,l}(X^{(\alpha)}))\ge \alpha_i-1 
\end{equation}
(cf.\ Lemma \ref{lem:W(m;n) formula}(1)).

\begin{lem}\label{lem:H(2r;n;l)}
Let $k$ be a field of characteristic $p>3$. Let $L\Def H(2r;\bfn;\omega_{H,l})^{(1)}$ with height $h\Def h(L)$. We define the subspaces $U^+,U^-\subset L$ to be
\begin{equation*}
\begin{aligned}
U^-=&\sum_{i=1,i\neq l,l'}^rk\cdot D_{H,l}(X_i)+k\cdot D_{H,l}(X_l),\\
U^+=&\sum_{j=1,j\neq l,l'}^{r}k\cdot D_{H,l}\bigl(X_{j'}^{(p^{n_{j'}}-1)}\bigl)+k\cdot D_{H,l}(X_{l'}^{(p^{n_{l'}}-1)})\\
&+
\begin{cases}
k\cdot D_{H,l}(X^{(\bfp^{\bfn}-\unit)})&\text{if $r+1\not\equiv 0$ mod $p$},\\
\sum_{i=1}^{2r}k\cdot D_{H,l}(X^{(\bfp^{\bfn}-\unit-\epsilon_i)})&\text{if $r+1\equiv 0$ mod $p$.}
\end{cases}
\end{aligned}
\end{equation*}
Then $U^{\pm}$ satisfy the conditions (I), (II) and (III) in Theorem \ref{thm:AC FLA} with respect to the adjoint representation $\ad:L\lto\Der(L)\subset\fgl_L$. 
 \end{lem}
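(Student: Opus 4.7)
My plan is to follow the pattern of Lemma~\ref{lem:H(2r;n;alpha)}: verify the three hypotheses of Theorem~\ref{thm:AC FLA} by reducing to the associated graded algebra $\gr L\subseteq W(2r;\bfn)$ and then applying the valuation estimates of Lemma~\ref{lem:W(m;n) formula}(4). Since every correction term in the defining formula
\[
D_{H,l}(f)=D_H(f)+\frac{\sigma(l)}{2}X_l^{(p^{n_l}-1)}\Bigl(2f\partial_{l'}+\partial_{l'}(f)\sum_jX_j\partial_j-\sum_jX_j\partial_j(f)\partial_{l'}\Bigr)
\]
carries the factor $X_l^{(p^{n_l}-1)}$, the correction has strictly higher filtration weight than $D_H(f)$, so the leading term of $D_{H,l}(X^{(\alpha)})$ in $\gr_{|\alpha|-2}L$ equals $D_H(X^{(\alpha)})$. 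In particular, using $X_l^{(p^{n_l}-1)}X_l=p^{n_l}X_l^{(p^{n_l})}=0$ one computes $D_{H,l}(X_l)=\sigma(l)\partial_{l'}$ exactly, while for $i\neq l,l'$ the leading term of $D_{H,l}(X_i)$ is $\sigma(i)\partial_{i'}$.

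For condition (I), the leading terms of $U^-$ in $\gr_{-1}L$ span $\bigoplus_{j\in J}k\cdot\partial_j$ for a distinguished index set $J\subsetneq\{1,\dots,2r\}$. The remaining $\partial_j$'s are recovered by repeatedly bracketing elements of $U^-$ against the leading terms $\sigma(j')X_{j'}^{(p^{n_{j'}}-2)}\partial_j$ of the generators $D_{H,l}(X_{j'}^{(p^{n_{j'}}-1)})$ in $U^+$; in particular the missing direction $\partial_l$ is supplied by the auxiliary generator $D_{H,l}(X_{l'}^{(p^{n_{l'}}-1)})$ bracketed iteratively with the leading term $\partial_{l'}$ of $D_{H,l}(X_l)$. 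Since $U^+$ also contains the entire top-degree component $\gr_hL$, the bracket relation $[H(2r;\bfn)^{(l)}_{-1},H(2r;\bfn)^{(l)}_i]=H(2r;\bfn)^{(l)}_{i-1}$ from (\ref{eq:H bracket 2}) then propagates generation to every graded piece, and Lemma~\ref{lem:GLA vs FLA}(1) lifts the conclusion from $\gr L$ to $L$. The auxiliary identity $[U^-,U^-]=0$ is a direct check from the Poisson-type formula $[D_{H,l}(f),D_{H,l}(g)]=D_{H,l}(D_{H,l}(f)(g))$ (the specialization of (\ref{eq:Lie bracket D H omega}) to $\omega=\omega_{H,l}$), together with the observation that the index set underlying $U^-$ contains no pair $\{i,i'\}$, so that all putatively nonzero Poisson brackets vanish.

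The same Poisson identity settles condition (II): it reduces $\ad(D_{H,l}(X_i))^k=0$ on $L$ to $D_{H,l}(X_i)^k=0$ on $A(2r;\bfn)$. Writing $D_{H,l}(X_i)=A+B$ with $A=\sigma(i)\partial_{i'}$ and $B$ proportional to $X_l^{(p^{n_l}-1)}X_i\partial_{l'}$, one observes that $B$ raises the $X_l$-exponent by $p^{n_l}-1$, so any word involving $B$ at least twice vanishes on $A(2r;\bfn)$; combined with $A^{p^{n_{i'}}}=0$ this mimics Lemma~\ref{lem:D H alpha p^n=0}. For condition (III), the estimate $d_i(D_{H,l}(X^{(\alpha)}))\ge\alpha_i-1$ of (\ref{eq:d_i D H l}) guarantees that for every basis element $D$ of $U^+$ one can exhibit $j$ with $d_j(D)\ge p^{n_j}-2$: take $j=j'$ for $D_{H,l}(X_{j'}^{(p^{n_{j'}}-1)})$, $j=l'$ for $D_{H,l}(X_{l'}^{(p^{n_{l'}}-1)})$, any $j$ for $D_{H,l}(X^{(\bfp^{\bfn}-\unit)})$, and any $j\neq i$ for $D_{H,l}(X^{(\bfp^{\bfn}-\unit-\epsilon_i)})$ (such $j$ exists because $2r\ge 4$). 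Since $p>3$ forces $p^{n_j}\ge 5$, the inequality $2(p^{n_j}-2)-1\ge p^{n_j}$ holds and Lemma~\ref{lem:W(m;n) formula}(4) delivers $\ad(D)^2=0$ in every case. The principal obstacle will be the graded bookkeeping in (I): the two subcases $l\le r$ and $l>r$ must be handled separately to identify which directions $\partial_j$ come from $U^-$ directly and which require further bracket steps with the explicit elements of $U^+$.
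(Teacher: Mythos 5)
Your treatment of $[U^-,U^-]=0$, of condition (II) (splitting $D_{H,l}(X_i)$ into $\sigma(i)\partial_{i'}$ plus a correction divisible by $X_l^{(p^{n_l}-1)}$, so that any word containing the correction twice vanishes), and of condition (III) (the estimates $d_j(D)\ge p^{n_j}-2$ combined with Lemma~\ref{lem:W(m;n) formula}(4), valid since $p>3$ gives $p^{n_j}\ge 5$) all agree in substance with the paper's proof and are sound. The gap is in condition (I). By \eqref{eq:H(2r;n;l)} the algebra $L$ contains $D_{H,l}(1)=\sigma(l)X_l^{(p^{n_l}-1)}\partial_{l'}$, so $\gr L$ is \emph{not} the graded Hamiltonian algebra: it equals $\sum_{0<\alpha}k\cdot D_H(X^{(\alpha)})$ (with the appropriate range of $\alpha$) \emph{plus} the extra one-dimensional piece $k\cdot X_l^{(p^{n_l}-1)}\partial_{l'}$ sitting in degree $p^{n_l}-2$, and that extra piece is not of the form $D_H(X^{(\alpha)})$. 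The subspace $M=\sum_{0<\alpha}k\cdot D_H(X^{(\alpha)})$ is a Lie subalgebra of $\gr L$ of codimension one (a bracket $[\overline{D_{H,l}(f)},\overline{D_{H,l}(g)}]$ with $f,g$ nonconstant is the top graded part of $D_{H,l}(h)$ with $h$ having no constant term, hence lands back in $M$), and the images of both $U^+$ and $U^-$ lie in $M$. So your chain ``generate $\gr_{-1}L$ and $\gr_hL$, then propagate by \eqref{eq:H bracket 2}'' produces only $M\subsetneq \gr L$, and Lemma~\ref{lem:GLA vs FLA}(1) cannot be invoked with $S=U^+\cup U^-$: its hypothesis fails.

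The point you are missing is that the missing piece is reached only by a bracket whose filtration degree \emph{jumps} and is therefore invisible in $\gr L$: one has
$[D_{H,l}(X_l),D_{H,l}(X_{l'})]=\sigma(l)D_{H,l}(1)$, an element of $L_{(p^{n_l}-2)}$ rather than of the expected degree $-2$. The paper's proof exploits exactly this: it observes that
$D_{H,l}(1)=\ad(D_{H,l}(X_l))^{p^{n_{l'}}-1}\bigl(D_{H,l}(X_{l'}^{(p^{n_{l'}}-1)})\bigr)$
(one more application of $\ad(D_{H,l}(X_l))$ than you use to recover $\partial_l$) already lies in $\langle U^+,U^-\rangle_{\Lie}$, and that $\overline{D_{H,l}(1)}=X_l^{(p^{n_l}-1)}\partial_{l'}$ is precisely the extra graded piece; Lemma~\ref{lem:GLA vs FLA}(1) is then applied to the enlarged set $S=U^+\cup U^-\cup\{D_{H,l}(1)\}$, whose image does generate $\gr L$ by \eqref{eq:H bracket} and \eqref{eq:H bracket 2}. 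Without this extra step your verification of (I) is incomplete.
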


\begin{proof}
First note that
\begin{equation}\label{eq:lem H(2r;n;l)}
\begin{gathered}
D_{H,l}(X_i)=\sigma(i)\partial_{i'}+\frac{\sigma(l)}{2}X_l^{(p^{n_l}-1)}X_{i}\partial_{l'}, \quad i\neq l,l'\\
D_{H,l}(X_l)=\sigma(l)\partial_{l'},\quad D_{H,l}(X_{l'})=\sigma(l')\partial_{l}+\frac{\sigma(l)}{2}X_{l}^{(p^{n_l}-1)}(X_{l'}\partial_{l'}+\sum_{j=1}^{2r}X_j\partial_j). 
\end{gathered}
\end{equation}
In particular, this immediately implies that $[U^-,U^-]=0$. Let us prove that $U^+$ and $U^-$ generate $L$. Notice that
\begin{equation*}
D_{H,l}(1)=\ad(D_{H,l}(X_l))^{p^{n_{l'}}-1}(D_{H,l}(X_{l'}^{(p^{n_{l'}}-1)})).
\end{equation*}
Thus, by Lemma \ref{lem:GLA vs FLA}(1), it suffices to show that the images of $U^{\pm}$ and $D_{H,l}(1)$ in $\gr L$ generate the graded Lie algebra $\gr L$. By the description (\ref{eq:H(2r;n;l)}), we have
\begin{equation*}
\gr L\simeq
\begin{cases}
\sum_{0<\alpha\le\bfp^{\bfn}-\unit}k\cdot D_{H}(X^{(\alpha)})+k\cdot X_{l}^{(p^{n_l}-1)}\partial_{l'}&\text{if $r+1\not\equiv 0$ mod $p$},\\
\sum_{0<\alpha<\bfp^{\bfn}-\unit}k\cdot D_{H}(X^{(\alpha)})+k\cdot X_{l}^{(p^{n_l}-1)}\partial_{l'}&\text{if $r+1\equiv 0$ mod $p$},
\end{cases}
\end{equation*} 
and the images $\overline{U}^{\pm}$ in $\gr L$ are thus given by
\begin{equation*}
\begin{aligned}
\overline{U}^-=&\sum_{i=1,i\neq l,l'}^rk\cdot D_{H}(X_i)+k\cdot D_{H}(X_l),\\
\overline{U}^+=&\sum_{j=1,j\neq l,l'}^{r}k\cdot D_{H}\bigl(X_{j'}^{(p^{n_{j'}}-1)}\bigl)+k\cdot D_{H}(X_{l'}^{(p^{n_{l'}}-1)})\\
&+
\begin{cases}
k\cdot D_{H}(X^{(\bfp^{\bfn}-\unit)})&\text{if $r+1\not\equiv 0$ mod $p$},\\
\sum_{i=1}^{2r}k\cdot D_{H}(X^{(\bfp^{\bfn}-\unit-\epsilon_i)})&\text{if $r+1\equiv 0$ mod $p$,}
\end{cases}
\end{aligned}
\end{equation*}
and $\overline{D_{H,l}(1)}=X_{l}^{(p^{n_l}-1)}\partial_{l'}$ in $\gr L$. By the equations (\ref{eq:H bracket}) and (\ref{eq:H bracket 2}), $\overline{U}^{\pm}$ and $\overline{D_{H,l}(1)}$ generate the graded Lie algebra $\gr L$. Hence, the condition (I) is fulfilled.

Moreover, by the equation (\ref{eq:lem H(2r;n;l)}) together with Lemma \ref{lem:W(m;n) formula}(4), for any $i\in\{1,\dots,r,l\}\setminus\{l'\}$, we have
\begin{equation*}
\ad(D_{H,l}(X_i))^{p^k}=\sigma(i)\ad(\partial_{i'})^{p^k}
\end{equation*}
for any $k>0$. In particular, we have $\ad(D_{H,l}(X_i))^{p^{n_{i'}}}=0$ for $i\in\{1,\dots,r,l\}\setminus\{l'\}$. Thus, the condition (II) is also satisfied. 

Finally, let $D$ be one of the following elements of $U^+$,
\begin{equation*}
D_{H,l}\bigl(X_{j'}^{(p^{n_{j'}}-1)}\bigl),\quad 
\begin{cases}
D_{H,l}(X^{(\bfp^{\bfn}-\unit)})&\text{if $r+1\not\equiv 0$ mod $p$},\\
D_{H,l}(X^{(\bfp^{\bfn}-\unit-\epsilon_i)})&\text{if $r+1\equiv 0$ mod $p$.}
\end{cases}
\end{equation*}
Then, by (\ref{eq:d_i D H l}), $d_j(D)\ge p^{n_j}-2$ for some $1\le j\le 2r$. Thus, by Lemma \ref{lem:W(m;n) formula}(4), we have $\ad(D)^2=0$, hence the condition (III) is fulfilled.  This completes the proof. 
\end{proof}

\subsection{The Melikian algebras in characteristic $p=5$}\label{subsec:p=5}

In the case when $k$ is of characteristic $p=5$, there exists a further class of simple Lie algebras, which are called the \textit{Melikian algebras}. 
Here, let us recall the definition. For each pair $\bfn=(n_1,n_2)$   of positive integers. The \textit{Melikian algebra} $\calM(\bfn)=\calM(n_1,n_2)$ are defined as follows. Firstly, as a $k$-vector space, we have
\begin{equation*}
\calM(\bfn)=A(2;\bfn)\oplus W(2;\bfn)\oplus \widetilde{W(2;\bfn)}, 
\end{equation*}
where $\widetilde{W(2;\bfn)}$ is a copy of the Witt algebra $W(2;\bfn)$. The Melikian algebra $\calM(\bfn)$ has a Lie bracket so that the middle component $W(2;\bfn)$ becomes a Lie subalgebra of $\calM(\bfn)$ and it satisfies the following equations. Namely, 
\begin{equation*}
\begin{aligned}
\lbrack D,\widetilde{E}\rbrack&=\widetilde{[D,E]}+2{\rm div}(D)\widetilde{E},\\
[D,f]&= D(f)-2{\rm div}(D)f,\\
[f_1\widetilde{\partial}_1+f_2\widetilde{\partial}_2,g_1\widetilde{\partial}_1+g_2\widetilde{\partial}_2]&= f_1g_2-f_2g_1,\\
[f,\widetilde{E}]&= fE,\\
[f,g]&= 2(g\partial_2(f)-f\partial_2(g))\widetilde{\partial}_1+2(f\partial_1(g)-g\partial_1(f))\widetilde{\partial}_2, 
\end{aligned}
\end{equation*} 
for $D\in W(2;\bfn), \widetilde{E}\in\widetilde{W(2;\bfn)}$ and $f,g\in A(2;\bfn)$. Here, the map ${\rm div}:W(2;\bfn)\lto A(2;\bfn)$ is defined to be
\begin{equation*}
{\rm div}\Bigl(\sum_{i=1}^2f_i\partial_i\Bigl)\Def\sum_{i=1}^2\partial_i(f_i). 
\end{equation*}
Moreover, $\calM(\bfn)$ has the grading
\begin{equation}\label{eq:gr M}
\calM(\bfn)=\bigoplus_{i=-s}^{h}\calM(\bfn)_i
\end{equation}
for which $\calM(\bfn)$ becomes a $\Z$-graded Lie algebra with the depth $s(\calM(\bfn))=3$ and the height $h(\calM(\bfn))=3\cdot 5^{n_1+n_2}-7$. Moreover, we have
\begin{equation*}
\begin{aligned}
\calM(\bfn)_{-3}&=W(2;\bfn)_{-1}=k\cdot\partial_1+k\cdot\partial_2,\\
\calM(\bfn)_{-2}&=A(2;\bfn)_{0}=k\cdot 1,\\
\calM(\bfn)_{-1}&=\widetilde{W(2;\bfn)}_{-1}=k\cdot\widetilde{\partial}_1+k\cdot\widetilde{\partial}_2,\\
\calM(\bfn)_0&=\sum_{i,j=1,2}k\cdot X_i\partial_j,\\
\calM(\bfn)_{3\cdot 5^{n_1+n_2}-7}&=\widetilde{W(2;\bfn)}_{h(W^2)}=k\cdot X^{(\bfp^{\bfn}-\unit)}\widetilde{\partial}_1+k \cdot X^{(\bfp^{\bfn}-\unit)}\widetilde{\partial}_2. 
\end{aligned}
\end{equation*}

\begin{lem}\label{lem:melikian}
Let $k$ be a field of characteristic $p=5$. Then the Melikian algebra $\calM(\bfn)$ is generated by the subspaces $\calM(\bfn)_{-3}$ and $\calM(\bfn)_{h(\calM(\bfn))}$.
\end{lem}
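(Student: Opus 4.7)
My plan is to set $L'=\langle \calM(\bfn)_{-3},\calM(\bfn)_{h(\calM(\bfn))}\rangle_{\Lie}$ and show directly that $L'$ contains each of the three summands $A(2;\bfn)$, $W(2;\bfn)$ and $\widetilde{W(2;\bfn)}$ in turn, which forces $L'=\calM(\bfn)$. Since $\calM(\bfn)$ is already $\Z$-graded, I do not need the filtered-to-graded device of Lemma \ref{lem:GLA vs FLA}\,(1); I only need to push the top piece downward and reassemble the other summands using the explicit commutation rules given in \S\ref{subsec:p=5}.

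The first step is to show $\widetilde{W(2;\bfn)}\subseteq L'$. By construction $\calM(\bfn)_{h(\calM(\bfn))}$ contains $X^{(\bfp^{\bfn}-\unit)}\widetilde{\partial}_1$ and $X^{(\bfp^{\bfn}-\unit)}\widetilde{\partial}_2$, while $\calM(\bfn)_{-3}$ contains $\partial_1,\partial_2$. Because $\Div(\partial_i)=0$, the bracket rule $[D,\widetilde E]=\widetilde{[D,E]}+2\Div(D)\widetilde E$ combined with the Witt formula \eqref{eq:witt bracket} gives
\begin{equation*}
[\partial_i,X^{(\alpha)}\widetilde{\partial}_j]=X^{(\alpha-\epsilon_i)}\widetilde{\partial}_j
\end{equation*}
for all $0\le\alpha\le\bfp^{\bfn}-\unit$ and $j=1,2$. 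Iterating this and inducting on $|\bfp^{\bfn}-\unit-\alpha|$, every $X^{(\alpha)}\widetilde{\partial}_j$ lies in $L'$, so $\widetilde{W(2;\bfn)}\subseteq L'$.

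Once $\widetilde{W(2;\bfn)}\subseteq L'$, I can recover the remaining summands from the bracket formulas essentially for free. Applying $[f_1\widetilde{\partial}_1+f_2\widetilde{\partial}_2,g_1\widetilde{\partial}_1+g_2\widetilde{\partial}_2]=f_1g_2-f_2g_1$ with $f_1=1$, $f_2=0$, $g_1=0$, $g_2=X^{(\alpha)}$ yields
\begin{equation*}
[\widetilde{\partial}_1,X^{(\alpha)}\widetilde{\partial}_2]=X^{(\alpha)}\in L',
\end{equation*}
so $A(2;\bfn)\subseteq L'$. Then the rule $[f,\widetilde E]=fE$ with $f=X^{(\alpha)}$ and $\widetilde E=\widetilde{\partial}_j$ produces
\begin{equation*}
[X^{(\alpha)},\widetilde{\partial}_j]=X^{(\alpha)}\partial_j\in L',
\end{equation*}
so $W(2;\bfn)\subseteq L'$. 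Combining the three inclusions gives $L'=A(2;\bfn)\oplus W(2;\bfn)\oplus\widetilde{W(2;\bfn)}=\calM(\bfn)$.

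There is really no serious obstacle: the proof is a direct unwinding of the defining brackets, with the only nontrivial input being the observation that the $\partial_i$'s act on the twisted Witt coordinates essentially by the usual Witt formula (thanks to $\Div(\partial_i)=0$). The mild care needed is to use the sequence of three bracket rules in the correct order, so that once $\widetilde{W(2;\bfn)}$ is produced from the two given generating subspaces, the summands $A(2;\bfn)$ and $W(2;\bfn)$ can be read off as commutators involving only elements already known to lie in $L'$.
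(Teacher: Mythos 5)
Your proof is correct and follows essentially the same route as the paper: both push the top graded piece of $\widetilde{W(2;\bfn)}$ down using $[\partial_i,X^{(\alpha)}\widetilde{\partial}_j]=X^{(\alpha-\epsilon_i)}\widetilde{\partial}_j$, then recover $A(2;\bfn)$ from the bracket of two twisted Witt elements and $W(2;\bfn)$ from the rule $[f,\widetilde{E}]=fE$ (the paper uses the special case $[1,\widetilde{E}]=E$ where you use $[X^{(\alpha)},\widetilde{\partial}_j]=X^{(\alpha)}\partial_j$, an immaterial difference).
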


\begin{proof}
By the definition of the bracket, we have
\begin{equation*}
\begin{aligned}
\lbrack\partial_j,X^{(\alpha)}\widetilde{\partial}_i\rbrack&=X^{(\alpha-\epsilon_j)}\widetilde{\partial}_i,\\
[X^{(\alpha)}\widetilde{\partial}_1,\widetilde{\partial}_2]&=X^{(\alpha)},\\
[1,\widetilde{E}]&=E.
\end{aligned}
\end{equation*}
This implies that $\calM(\bfn)$ can be generated by $\calM(\bfn)_{-3}$ and $\calM(\bfn)_{h(\calM(\bfn))}$.
\end{proof}

\begin{definition}\label{def:melikian}
A simple Lie algebra over an algebraically closed field $k$ of characteristic $p=5$ is of \textit{Melikian type} if it is isomorphic to the Melikian algebra $\calM(\bfn)$ for some pair $\bfn=(n_1,n_2)$ of positive integers. 
\end{definition}


\section{The conjecture for non-classical simple Lie algebras}\label{sec:application}

\subsection{Classification of simple Lie algebras}\label{subsec:KS conj}

Let $k$ be an algebraically closed field of characteristic $p>3$. 
Let us begin with the classification of \textit{Cartan type simple Lie algebras}. 
In the following, for any $X\in\{W,S,H,K\}$, we always consider $X(m;\bfn)^{(k)}(k\ge 0)$ as a $\Z$-graded Lie algebra with respect to the natural grading described in \S\ref{sec:Cartan}.

\begin{definition}(cf.\ \cite[Definition 4.2.4 and Theorem 4.2.7(2)]{Strade})\label{def:cartan type}
A filtered simple Lie algebra $L$ over $k$ is said to be of \textit{type} $X\in\{W,S,H,K\}$ if there exist $m\in\Z_{>0}$, $\bfn\in\Z^m_{>0}$ and an injective homomorphism $\psi:\gr L\hookrightarrow X(m;\bfn)$ of graded Lie algebras such that
\begin{equation*}
X(m;\bfn)^{(\infty)}\subseteq\psi(\gr L)\subseteq X(m;\bfn). 
\end{equation*}
A filtered simple Lie algebra over $k$ is said to be of \textit{Cartan type} if it is of type $X$ for some $X\in\{W,S,H,K\}$.   
A simple Lie algebra over $k$ is said to be of \textit{Cartan type} if it is isomorphic as a Lie algebra to a filtered simple Lie algebra of Cartan type. 
\end{definition}

\begin{thm}\label{thm:gr cartan type}
Let $k$ be an algebraically closed field of characteristic $p>3$. 
\begin{enumerate}
\renewcommand{\labelenumi}{(\arabic{enumi})}
\item {\rm (cf.\ \cite[Theorem 6.1.1(1)]{Strade})} The graded simple Cartan type Lie algebras 
\begin{equation*}
W(m;\bfn),~~S(m;\bfn)^{(1)}\ (m\ge 3),~~H(2r;\bfn)^{(2)},~~K(2r+1;\bfn)^{(1)} 
\end{equation*}
are not of classical type. 
\item {\rm (cf.\ \cite[Theorems 6.3.8, 6.5.1 and Corollaries 6.1.7, 6.6.2]{Strade})} Any $\Z$-graded simple Lie algebra over $k$ of Cartan type is isomorphic to one $X(m;\bfn)^{(\infty)}$ of the Lie algebras in (1). 
\item {\rm (cf.\ \cite[Corollaries 6.1.7 and 6.6.2]{Strade})} If $L$ is a filtered simple Lie algebra of type $X\in\{W,K\}$, then $L\simeq\gr L$ and hence it is isomorphic to $X(m;\bfn)^{(\infty)}$ for some $m\ge 1$ and $\bfn\in\Z^m_{>0}$.  
\end{enumerate}
\end{thm}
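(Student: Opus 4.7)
The plan is to treat the theorem as a compilation of classification results for $\Z$-graded and filtered simple Lie algebras of Cartan type in characteristic $p>3$, ultimately reducing each assertion to the corresponding statement in Strade's monograph. I would first unpack the conceptual content of each part and then indicate the mechanism of proof.

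For part (1), the strategy is to distinguish the four families from the classical types via invariants. A simple Lie algebra of classical type in characteristic $p>3$ has dimension matching (up to a one-dimensional centre in the type $A_n$ with $p\mid n+1$) the dimension of its complex avatar, admits a non-degenerate $\ad$-invariant bilinear form, and its associated group scheme is the Frobenius kernel of a smooth Chevalley group. By contrast, each of $W(m;\bfn)$, $S(m;\bfn)^{(1)}$, $H(2r;\bfn)^{(2)}$, $K(2r+1;\bfn)^{(1)}$ has dimension that grows with $p^{|\bfn|}$ and carries a natural $\Z$-grading with nontrivial components in many degrees. A dimension count already excludes most classical types; the remaining small borderline cases (small $\bfn$) are handled by comparing Cartan subalgebras or by checking the non-existence of a non-degenerate invariant form.

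For part (2), I would invoke the recognition theorem for graded Cartan type Lie algebras. Starting from a $\Z$-graded simple Cartan type Lie algebra $L=\bigoplus_{i=-s}^h L_i$, one uses transitivity of the grading together with the structure of the local part $L_{-1}\oplus L_0\oplus L_1$ to build an isomorphism with one of the standard models $X(m;\bfn)^{(\infty)}$. The heart of the argument, and the expected main obstacle, is identifying which of the four standard types $X\in\{W,S,H,K\}$ occurs and with which weight $\bfn$; this requires a careful analysis of the $L_0$-module structure on $L_{-1}$ and of the invariant forms preserved at the top of the grading.

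For part (3), the claim is that the canonical filtration of a filtered simple Lie algebra of type $W$ or $K$ splits as a grading. The idea is to construct, by successive approximation, a graded complement to each filtration step that is closed under the bracket; equivalently one shows that the deformation cohomology controlling filtered deformations of $W(m;\bfn)$ and $K(2r+1;\bfn)^{(1)}$ vanishes in the pertinent graded degrees. This is more delicate than it first appears, because the analogous splitting genuinely fails for $S$ and $H$ types (this is precisely why the non-graded Special and Hamiltonian algebras of \S\ref{subsec:cartan simple} must be treated separately), so the proof must exploit specific features of the $W$ and $K$ constructions, in particular the transitivity of the $0$-graded component on the depth-one part.
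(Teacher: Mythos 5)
The first thing to note is that the paper contains no proof of this theorem: it is imported wholesale from Strade's monograph, with the precise references (Theorem 6.1.1, Theorems 6.3.8 and 6.5.1, Corollaries 6.1.7 and 6.6.2) built into the statement itself, and the authors use it as a black box. There is therefore no argument of the paper to compare yours against, and your proposal has to be judged as an attempted independent derivation. As such it is a roadmap rather than a proof: for parts (2) and (3) you name the correct machinery (the Recognition Theorem for the local part $L_{-1}\oplus L_0\oplus L_1$, and the rigidity of filtered deformations of the $W$ and $K$ series), but these are precisely the classification-level theorems being cited, each occupying a substantial portion of Strade's Chapter 6; invoking them does not constitute a proof, and the honest move here is the one the paper makes, namely to cite them.

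The one place where you commit to concrete invariants, part (1), contains a step that would fail. The Hamiltonian algebras are well known to admit non-degenerate $\ad$-invariant bilinear forms (already for $\bfn=\unit$), so ``non-existence of a non-degenerate invariant form'' cannot be used to exclude them from the classical list. Worse, $\Dim H(2;\unit)^{(2)}=p^{2}-2=\Dim\mathfrak{psl}_{p}$, and $\mathfrak{psl}_p$ also carries a non-degenerate (induced trace) form, so for the case $r=1$, $\bfn=\unit$ --- which is included in the statement, cf.\ Remark \ref{rem:omega S m=2 0} --- both your dimension count and your invariant-form fallback fail simultaneously. This pair is genuinely borderline: for $p=3$ the two algebras are actually isomorphic (Remark \ref{rem:H alg}), and separating them for $p>3$ requires a finer invariant, e.g.\ Premet's sandwich-element criterion or the structure of root spaces relative to a torus of maximal dimension; ``comparing Cartan subalgebras'' as stated is too vague to carry that weight.
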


By Remarks \ref{rem:Witt alg} and \ref{rem:H alg}, the assumption that $p>3$ is necessary in Theorem \ref{thm:gr cartan type}(1). 
We call $m$ and $\bfn$ the \textit{degree} and  \textit{weight} respectively for $X(m;\bfn)^{(\infty)}$. Theorem \ref{thm:gr cartan type}(3) asserts that a non-graded Cartan type simple Lie algebra over $k$ is isomorphic to a  filtered simple Special or Hamiltonian algebra. In fact, a classification of such non-graded simple Cartan type Lie algebras is given in the following way. 

\begin{thm}{\rm (cf.\ \cite[Chapter 6]{Strade})}\label{thm:nongr cartan type}
Let $k$ be an algebraically closed field of characteristic $p>3$. 
\begin{enumerate}
\renewcommand{\labelenumi}{(\arabic{enumi})}
\item {\rm (cf.\ \cite[Theorems 6.3.8 and 6.3.10]{Strade})} Let $L$ be a filtered simple Lie algebra of type $S$ of degree $m\ge 2$. If $L$ is non-graded, then it is isomorphic to one of the filtered Special  algebras defined in \S\ref{subsubsec:FLA type S},
\begin{equation*}
S(m;\bfn;\Phi(\tau))^{(1)},\quad S(m;\bfn;\Phi(l))\,(1\le l\le m).
\end{equation*}

\item {\rm (cf.\ \cite[Corollary 6.4.10, Theorem 6.4.11 and Proposition 6.5.3]{Strade})}
Let $L$ be a filtered simple Lie algebra of type $H$ of degree $2r\ge 4$. If $L$ is non-graded, then it is isomorphic to one of the filtered Hamiltonian algebras defined in \S\ref{subsubsec:FLA type H}~(cf.\ Definition \ref{def:Hamiltonian forms}), 
\begin{equation*}
H(2r;\bfn;\omega(\alpha))^{(1)}~(\alpha\in M_{2r}(k) ~\text{with}~{}^t\alpha=-\alpha),\quad H(2r;\bfn;\omega_{H,l})^{(1)}~(1\le l\le 2r).
\end{equation*}
\end{enumerate}
\end{thm}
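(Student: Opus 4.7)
The strategy is to view each non-graded filtered simple Lie algebra $L$ of type $X\in\{S,H\}$ as a filtered deformation of its associated graded Lie algebra. The embedding $X(m;\bfn)^{(\infty)}\subseteq\psi(\gr L)\subseteq X(m;\bfn)$ from Definition \ref{def:cartan type}, combined with Theorem \ref{thm:gr cartan type}(2), severely restricts $\gr L$; in fact a preliminary step is to show that, after adjusting $\psi$, one may assume $\psi(\gr L)=X(m;\bfn)^{(\infty)}$, so the classification reduces to describing all non-trivial filtered structures on the fixed graded Lie algebra $X(m;\bfn)^{(\infty)}$.

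My plan is to translate filtered deformations into cohomological data. For a graded Lie algebra $G=\bigoplus_i G_i$, equivalence classes of filtered Lie algebra structures with $\gr=G$ are classified by the negative-degree part $\bigoplus_{j>0}H^2(G,G)_{-j}$ of Lie algebra cohomology, subject to the Maurer--Cartan obstruction from the cup product on $H^\bullet(G,G)$. The first key step is thus to compute $H^2(X(m;\bfn)^{(\infty)},X(m;\bfn)^{(\infty)})_{-j}$ for $j>0$. For type $S$, I would exploit the identification $S(m;\bfn)^{(1)}=\{D\in W(m;\bfn)\,|\,D(\omega_S)=0\}$ to relate these cohomology classes to infinitesimal deformations of the defining volume form $\omega_S$, using the Cartan-type arguments of depth one (Jacobson--Seligman--Kostrikin--Strade) that reduce the computation to analyzing the $W(m;\bfn)$-action on $\Omega^m$. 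For type $H$, analogously, the cohomology is controlled by infinitesimal deformations of the symplectic form $\omega_H\in\Omega^2$.

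The second main step is to realize every non-trivial cohomology class by an explicit filtered algebra on the given list. For type $S$, I would show that any non-trivial deformation of $\omega_S$ in the completion $A((m))$ can, via an automorphism of $A((m))$ preserving the filtration, be normalized either to a form whose associated Special algebra is $S(m;\bfn;\Phi(\tau))^{(1)}$ (the ``volume-form twist'' by $1-X^{(\bfp^{\bfn}-\unit)}$) or to one of the $m$ forms yielding $S(m;\bfn;\Phi(l))$; the remaining task is to check that distinct normal forms give non-isomorphic filtered algebras, which is done by comparing invariants read off $\gr L$ together with its filtration by $L_{(i)}$. For type $H$, I would classify closed $2$-forms $\omega$ with $\det(\omega_{ij})\in A((2r))^*$ up to the action of the automorphism group of $A((2r))$, reducing to the normal forms $\omega(\alpha)$ (coming from polynomial perturbations of the top-degree part) and $\omega_{H,l}$ (coming from the exponential perturbation); the explicit bases of $H(2r;\bfn;\omega(\alpha))^{(1)}$ and $H(2r;\bfn;\omega_{H,l})^{(1)}$ are then obtained as in \S\ref{subsubsec:FLA type H}.

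The main obstacle, by a wide margin, is the cohomology computation together with the normalization of deformed forms. Both are delicate: the cohomology of Cartan type algebras with coefficients in the adjoint representation mixes the divided power structure with characteristic $p$ phenomena (for instance, the distinction $r+1\equiv 0\pmod p$ or $2r+1\equiv -3\pmod p$ appears already in the description of $K$ and $H$), and the normalization step requires a careful induction on filtration depth together with a non-trivial use of the automorphism group of $A((m))$. In practice, this is the content of an entire chapter of Strade's book \cite{Strade}, and my sketch would essentially follow his line of argument, specialized to the types $S$ and $H$ of interest here.
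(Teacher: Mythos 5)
This theorem is not proved in the paper at all: it is quoted verbatim from Strade's classification (Theorems 6.3.8 and 6.3.10, Corollary 6.4.10, Theorem 6.4.11 and Proposition 6.5.3 of \cite{Strade}), so there is no in-paper argument to compare your sketch against. Your outline is broadly in the spirit of how such results are established (filtered deformations of a fixed graded algebra, controlled by deformations of the defining differential form modulo the automorphism group of the divided power algebra), and you are candid that all the substantive work --- the cohomology computations and the normalization of forms --- is simply deferred to Strade. As a ``proof'' it therefore carries essentially the same content as the paper's citation, no more.

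There is, however, one concrete error in your preliminary reduction. You claim that after adjusting $\psi$ one may assume $\psi(\gr L)=X(m;\bfn)^{(\infty)}$, so that the problem becomes the classification of filtered structures on the fixed graded algebra $X(m;\bfn)^{(\infty)}$. This is false in general: Definition \ref{def:cartan type} only gives $X(m;\bfn)^{(\infty)}\subseteq\psi(\gr L)\subseteq X(m;\bfn)$, and the containment can be strict for exactly the algebras being classified. For instance, the paper's own computation in the proof of Lemma \ref{lem:H(2r;n;l)} shows that for $L=H(2r;\bfn;\omega_{H,l})^{(1)}$ the graded algebra $\gr L$ contains the extra element $X_l^{(p^{n_l}-1)}\partial_{l'}$ not lying in $H(2r;\bfn)^{(2)}$; similarly $\gr S(m;\bfn;\Phi(l))$ strictly contains $S(m;\bfn)^{(1)}$. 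So the classification must range over all intermediate graded subalgebras between $X^{(\infty)}$ and $X$, together with their filtered deformations; collapsing this to deformations of $X^{(\infty)}$ alone would miss some of the algebras appearing in the statement. If you intend the sketch to be more than a pointer to \cite{Strade}, this reduction needs to be repaired.
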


Now we can state the main ingredient for our proof of the main theorem. Our proof heavily relies on the classification theorem for simple Lie algebras in characteristic $p>3$, \emph{i.e.}\ the \textit{generalized Kostrikin--Shafarevich conjecture}, which can be stated as follows.

\begin{thm}{\rm (Block--Wilson--Strade--Premet, cf.\ \cite[Theorem 20.7.15]{Strade3})}\label{thm:KS conj}
Let $k$ be an algebraically closed field of characteristic $p>3$. Then any simple Lie algebra over $k$ is of classical, Cartan, or Melikian type.
\end{thm}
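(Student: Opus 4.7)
The statement is the celebrated generalized Kostrikin--Shafarevich classification, whose full proof spans three volumes of Strade's monograph and many papers of Block, Wilson, Strade and Premet; I will only sketch the broad architecture of the argument. The plan is to reduce to a statement about $\Z$-graded simple Lie algebras (the Recognition Theorem) by attaching to an arbitrary simple $L$ a canonical descending filtration, and then classify the possible graded algebras and their filtered deformations.

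The starting point is the choice of a \emph{Weisfeiler filtration} associated with a maximal subalgebra $L_0\subsetneq L$ of minimal codimension containing a maximal torus $T$ of maximal absolute toral rank. Setting $L_{-1}\supseteq L_0$ to be a minimal $L_0$-invariant subspace strictly containing $L_0$ and then defining $L_{i+1}=\{x\in L_i \mid [x,L_{-1}]\subseteq L_i\}$ for $i\ge 0$ and $L_{i-1}=[L_i,L_{-1}]+L_i$ for $i<0$, one obtains a filtered Lie algebra in the sense of Definition \ref{def:FLA}. The associated graded algebra $G\Def \gr L$ is then a $\Z$-graded Lie algebra of depth $s\in\{1,2,3\}$, and the $G_0$-module $G_{-1}$ is irreducible and faithful. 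The crucial technical input, due to Premet, is the classification of $1$- and $2$-dimensional \emph{sandwich elements} and the consequent control of the toral structure; this is what ultimately forces $p>3$.

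The next step is the \textbf{Recognition Theorem} (cf.\ \cite[Theorems 5.4.1 and 5.4.2]{Strade} in the exposition followed here): any $\Z$-graded Lie algebra $G=\bigoplus_{i=-s}^{h}G_i$ satisfying (a) $s\le 3$, (b) $G_{-1}$ is an irreducible faithful $G_0$-module, (c) $[x,G_{-1}]=0$ implies $x=0$ for $x\in G_i$, $i\ge 0$, and (d) certain constraints on $G_0$ coming from the toral rank analysis, must be isomorphic to one of the graded algebras in the list of Theorems \ref{thm:gr cartan type}(2) or a Melikian algebra $\mathcal{M}(\bfn)$. I would treat this step in three cases according to the depth: $s=1$ gives classical or $W,S,H$ type by analyzing the pair $(G_0,G_{-1})$ via the theory of irreducible representations of classical-like Lie algebras; $s=2$ forces contact type $K$ by reconstructing the contact form $\omega_K$ from the bracket $G_{-1}\times G_{-1}\to G_{-2}$; and $s=3$ occurs only in characteristic $5$ and leads to the Melikian algebras.

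Having classified the graded algebras, the final step is to lift the isomorphism $\gr L\simeq X$ back to $L$. For $X\in\{W,K\}$ and for the classical and Melikian cases, a cohomological argument (vanishing of $H^2$ of the relevant graded algebra in negative degrees) forces $L\simeq \gr L$, giving Theorem \ref{thm:gr cartan type}(3). For types $S$ and $H$ genuine filtered deformations occur, and they are parametrized exactly as in Theorem \ref{thm:nongr cartan type} by the forms $\Phi(\tau),\Phi(l),\omega(\alpha),\omega_{H,l}$; these are constructed by classifying the admissible $2$-cocycles and cutting down by the automorphism group action. The main obstacle throughout is the Recognition Theorem, whose proof requires an intricate analysis of the $G_0$-module structure of $G_{-1}$ combined with Premet's bound on sandwich elements; this is precisely where one loses control in characteristics $2$ and $3$ and why the assumption $p>3$ is indispensable.
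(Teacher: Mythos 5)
The paper does not prove this theorem: it is imported wholesale from the literature, with a pointer to \cite[Theorem 20.7.15]{Strade3}, so there is no proof of the authors' own to compare yours against. Your sketch is a broadly faithful description of the architecture of the Block--Wilson--Strade--Premet argument --- Weisfeiler filtration attached to a distinguished maximal subalgebra, passage to $\gr L$, the Recognition Theorem for the graded algebras, and then the classification of filtered deformations (trivial for classical, $W$, $K$ and Melikian; the families $\Phi(\tau),\Phi(l),\omega(\alpha),\omega_{H,l}$ for $S$ and $H$, matching Theorems \ref{thm:gr cartan type} and \ref{thm:nongr cartan type}). Two cautions, though. First, you present the bound $s\in\{1,2,3\}$ on the depth and the irreducibility/faithfulness/transitivity hypotheses of the Recognition Theorem as if they fell out of the construction of the filtration; in fact establishing that (a quotient of) $\gr L$ satisfies those hypotheses is a large part of the three-volume proof --- one must first invoke Weisfeiler's structure theorem on the unique minimal ideal $A(G)\simeq S\otimes A(m;\unit)$ of $G=\gr L$ and work with that, rather than with $\gr L$ directly. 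Second, the Recognition Theorem in the form you quote is really the Premet--Strade/Benkart--Gregory--Premet result and its clean hypotheses require $p\ge 5$; the case $p=5$ (where Melikian algebras intrude) and the toral-rank-two analysis need separate, substantial treatment. None of this makes your outline wrong, but it is an outline of a citation, not a proof, which is exactly the status the statement has in the paper itself.
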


\subsection{The proof of the main theorem}\label{subsec:AC non classical}

Now let us prove the main result.

\begin{thm}\label{thm:AC Cartan}
Let $k$ be an algebraically closed field of characteristic $p>3$ and $L$  a
simple Lie algebra over $k$. Set $\Gamma\coloneqq
\mathfrak{G}(L_{[p]})$~\textup{(cf.\ \nameref{s:notation})}.  In the case where $p=5$, we assume that $L$ is not isomorphic to a simple Hamiltonian algebra $H(2r;\unit;\omega(\alpha))^{(\infty)}$ of first type with weight $\bfn=\unit$ and  degree $2r\ge 4$. Then $\Gamma\in \pi^\loc_A(\A^1_k)$. 
\end{thm}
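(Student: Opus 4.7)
The strategy is to invoke the Block--Wilson--Strade--Premet classification (Theorem \ref{thm:KS conj}) and dispatch each type to one of the realization criteria already established: Corollary \ref{cor:AC classical} for classical type, and Corollary \ref{cor:AC GLA} or Theorem \ref{thm:AC FLA} for the non-classical types. The case-by-case verification of the hypotheses of the latter two criteria has been carried out throughout \S\ref{sec:Cartan}, so the argument here is essentially a bookkeeping exercise that collects those verifications.

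By Theorem \ref{thm:KS conj}, $L$ is of classical, Cartan, or Melikian type. The classical case is immediate from Corollary \ref{cor:AC classical}. For the Cartan case, I would first use Theorem \ref{thm:gr cartan type}(3) to note that filtered simple Lie algebras of types $W$ and $K$ are automatically graded, and then Theorem \ref{thm:gr cartan type}(2) reduces the graded Cartan case to the explicit list $W(m;\bfn)$, $S(m;\bfn)^{(1)}$ (with $m\ge 3$), $H(2r;\bfn)^{(2)}$, $K(2r+1;\bfn)^{(1)}$. I would apply Corollary \ref{cor:AC GLA} with $U = L_{-1}$ via Lemmas \ref{lem: W(m,n)}, \ref{lem: S(m,n)}, \ref{lem: H(m,n)} for types $W,S,H$, and Theorem \ref{thm:AC FLA} with the explicit $U^\pm$ of Lemma \ref{lem:K(2r+1;n)} for type $K$. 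For non-graded Cartan type, Theorem \ref{thm:nongr cartan type} identifies $L$ as one of the filtered Special or Hamiltonian algebras, after which Lemmas \ref{lem:FLA type S}, \ref{lem:H(2r;n;alpha)} and \ref{lem:H(2r;n;l)} supply the data needed to invoke Theorem \ref{thm:AC FLA}. Lemma \ref{lem:H(2r;n;alpha)} is the sole place where the hypothesis $\bfn\neq\unit$ is needed when $p=5$, and this is precisely the family excluded in the statement.

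Finally, if $L$ is of Melikian type, then $p=5$ and $L\simeq\calM(\bfn)$ for some $\bfn=(n_1,n_2)\in\Z_{>0}^2$. I would apply Corollary \ref{cor:AC GLA} to $L$ equipped with its natural grading \eqref{eq:gr M}, taking $U \coloneqq \calM(\bfn)_{-3} = k\partial_1 \oplus k\partial_2$. Condition (G\,I) holds because $s(\calM(\bfn))=3$, whence $[U,U]\subseteq \calM(\bfn)_{-6}=0$; condition (G\,II) is precisely Lemma \ref{lem:melikian}; and (G\,III) in the form $s<h$ is immediate since $h(\calM(\bfn)) = 3\cdot 5^{n_1+n_2}-7 \ge 68 > 3 = s(\calM(\bfn))$. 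Thus $\mathfrak{G}(\calM(\bfn)_{[p]}) \in \pi^\loc_A(\A^1_k)$.

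The genuine difficulty of the theorem is not conceptual but combinatorial: for each class appearing in the classification one must exhibit a bespoke pair of abelian generating subspaces $U^\pm$ whose brackets respect the structural identities of that class and whose nilpotency orders $\rho(\partial_i)^l = 0$ are controlled well enough for condition (II) of Theorem \ref{thm:AC FLA}. These tailored choices are precisely the content of \S\ref{sec:Cartan}, and the only step not already prepared there is the verification for $\calM(\bfn)$, which as above is immediate from the explicit grading and Lemma \ref{lem:melikian}.
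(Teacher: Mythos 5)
Your proposal is correct and follows essentially the same route as the paper's own proof: classification via Theorem \ref{thm:KS conj}, then Corollary \ref{cor:AC classical} for classical type, Corollary \ref{cor:AC GLA} with $U=L_{-1}$ (via Lemmas \ref{lem: W(m,n)}, \ref{lem: S(m,n)}, \ref{lem: H(m,n)}) for graded types $W,S,H$, Theorem \ref{thm:AC FLA} with Lemma \ref{lem:K(2r+1;n)} for type $K$, Lemmas \ref{lem:FLA type S}, \ref{lem:H(2r;n;alpha)}, \ref{lem:H(2r;n;l)} for the non-graded cases, and the same direct verification of (G\,I)--(G\,III) for the Melikian algebras. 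Your identification of Lemma \ref{lem:H(2r;n;alpha)} as the sole source of the $p=5$, $\bfn=\unit$ exclusion also matches the paper.
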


\begin{proof}
Suppose given a simple Lie algebra $L$ over $k$. 
By Theorem \ref{thm:KS conj}, $L$ is of classical, Cartan or Melikian type. If $L$ is of classical type, then we have already seen the assertion~(cf.~Corollary \ref{cor:AC classical}). Let us assume that $L$ is graded of Cartan type~(cf.\ Definition \ref{def:cartan type}), \emph{i.e.}\ $L\simeq X(m;\bfn)^{(\infty)}$ for some $X\in\{W,S,H,K\}$, $m\ge 1$ and $\bfn\in\Z^m_{>0}$~(cf.\ Theorem \ref{thm:gr cartan type}(2)). If $X=K$, then the assertion follows from Theorem \ref{thm:AC FLA} together with Lemma \ref{lem:K(2r+1;n)}. 
For $X\in\{W,S,H\}$, we apply \ref{lem: W(m,n)}, \ref{lem:
S(m,n)}, \ref{lem: H(m,n)} to prove that the graded Lie algebra $L=X(m;\bfn)^{(\infty)}$ together with the subspace $U\Def L_{-1}=X(m;\bfn)^{(\infty)}_{-1}$ satisfies the conditions (G\,I), (G\,II) and (G\,III) in Corollary \ref{cor:AC GLA}.  
This completes the proof for all the graded Cartan type simple Lie algebras $L$. Let us assume $L$ is a non-graded Cartan type simple Lie algebra, in which case it is isomorphic to a filtered Special or Hamiltonian algebra~(cf.\ Theorem \ref{thm:gr cartan type}(4)). Therefore, thanks to Theorem \ref{thm:nongr cartan type}, the assertion is a consequence of Theorem \ref{thm:AC FLA} together with Lemmas \ref{lem:FLA type S}, \ref{lem:H(2r;n;alpha)} and \ref{lem:H(2r;n;l)}. This completes the proof for Cartan type simple Lie algebras.  

It remains to handle the Melikian algebra $L=\calM(\bfn)$ in characteristic $p=5$. It suffices to show that  the conditions (G\,I), (G\,II)  and (G\,III) in Corollary \ref{cor:AC GLA} are satisfied for the graded Lie algebra $\calM(\bfn)$ together with the subspace $U\Def\calM(\bfn)_{-3}\subset\calM(\bfn)_{\le -1}$. The condition (G\,I) is immediate from the vanishing $[\calM(\bfn)_{-3},\calM(\bfn)_{-3}]=\calM(\bfn)_{-6}=0$, where we use the depth $s(\calM(\bfn))=3$. 
The condition (G\,II) is due to Lemma \ref{lem:melikian}. Finally, as $p=5$, the last condition (III) is immediate from  
\begin{equation*}
h(\calM(\bfn))-s(\calM(\bfn))
=3\cdot 5^{n_1+n_2}-7-3>0,
\end{equation*}
This completes the proof.  
\end{proof}

For the graded simple Cartan type Lie algebras $L=X(m;\bfn)^{(\infty)}$ of type $X\neq K$, the proof of Theorem \ref{thm:AC Cartan} actually indicates the following slightly finer result.

\begin{cor}\label{cor:AC Cartan p=2,3}
Let $k$ be a perfect field of characteristic $p>0$. Let
$L=X(m;\bfn)^{(\infty)}$ be a graded Cartan type simple Lie algebra over $k$ of
type $X\neq K$. Set $G\coloneqq \mathfrak{G}(L_{[p]})$~\textup{(cf.\ \nameref{s:notation})}. 
\begin{enumerate}
\renewcommand{\labelenumi}{(\arabic{enumi})}
\item If $p> 2$ then $G\in \pi^\loc_A(\A^1_k)$. 
\item If $p=2$ then $G\in \pi^\loc_A(\A^1_k)$ unless
\begin{equation*}
(m;\bfn)=
\begin{cases}
(1,1),\, (1,2),\,(2,\bfn)\,(\bfn\le (2,2))&\text{if $X=W$},\\
(3,\unit)&\text{if $X=S$},\\
(2,\bfn)\,(\bfn\le (2,2)),\,(4,\unit)&\text{if $X=H$}.\end{cases}
\end{equation*}
\end{enumerate}
\end{cor}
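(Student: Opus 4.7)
The plan is to apply Corollary \ref{cor:AC GLA} to $L = X(m;\bfn)^{(\infty)}$ with the natural choice $U \coloneqq L_{-1}$, in direct parallel with the graded-Cartan portion of the proof of Theorem \ref{thm:AC Cartan}. Conditions (G\,I) and (G\,II) hold uniformly across all parameters and characteristics: the depth identity $s(L)=1$ for each $X \in \{W,S,H\}$ forces $[L_{-1},L_{-1}]\subseteq L_{-2}=0$, and the generation identities $[L_{-1},L_i]=L_{i-1}$ from (\ref{eq:witt bracket}), (\ref{eq:S bracket 2}), (\ref{eq:H bracket 2}) give $L = \langle L_{-1},L_h\rangle_{\Lie}$. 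These are exactly the verifications already carried out under more restrictive hypotheses in Lemmas \ref{lem: W(m,n)}, \ref{lem: S(m,n)}, \ref{lem: H(m,n)}. Both statements (1) and (2) will therefore follow entirely from verifying (G\,III) in the appropriate characteristic.

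For statement (1), where $p > 2$, condition (G\,III) is simply $s(L) < h(L)$, i.e.\ $h \ge 2$. Using the three height formulas
\[
    h(W(m;\bfn)) = |\bfp^{\bfn}-\unit|-1,\quad h(S(m;\bfn)^{(1)}) = |\bfp^{\bfn}-\unit|-2,\quad h(H(2r;\bfn)^{(2)}) = |\bfp^{\bfn}-\unit|-3,
\]
a direct inspection shows $h \ge 2$ whenever $L$ is a graded Cartan type simple Lie algebra of one of the three types; in particular the classical-type coincidences $W(1;1) \cong \fsl_2$ and $H(2;\unit)^{(2)} \cong \mathfrak{psl}_3$ in characteristic $3$ (cf.\ Remarks \ref{rem:Witt alg}, \ref{rem:H alg}) are automatically outside the scope. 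Corollary \ref{cor:AC GLA} then yields the conclusion.

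For statement (2), the characteristic $2$ form of (G\,III) demands an integer $l > 0$ with $\ad(U)^{2^{l}} = 0$ and $(2^{l-1}+1)\,s(L) < h(L)$. By (\ref{eq:partial p^n=0}) together with the analogous Frobenius-nilpotency bounds on $S^{(1)}$ and $H^{(2)}$, the minimal admissible $l$ for $U = L_{-1}$ is governed by $\max_i n_i$, and the inequality reduces to an explicit arithmetic comparison between $h$ and a power of $2$ depending on $\bfn$. Substituting the three height formulas and carrying out the case split on $(m;\bfn)$ for each of $X = W, S, H$ produces the failures: these are precisely the listed exception tuples.

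The main obstacle will be the bookkeeping of this characteristic $2$ case analysis, especially at the borderline tuples where the controlling inequality is a near-miss. One must also check that these failures cannot be salvaged by choosing a different $U \subseteq L_{\le -1}$, which is automatic since $s(L)=1$ forces $U \subseteq L_{-1}$, and any proper subspace of $L_{-1}$ breaks (G\,II) by the explicit $[L_{-1},L_h]$-computations. Finally, the coincidence $S(2;\bfn) = H(2;\bfn)$ of Remark \ref{rem:omega S m=2 0} must be tracked to explain the appearance of $(2,\bfn)$ with $\bfn \le (2,2)$ in the $H$-exception list and to avoid double-counting against the $S$-case.
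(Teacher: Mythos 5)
Your overall strategy is the paper's: take $U=L_{-1}$, note that (G\,I) and (G\,II) of Corollary \ref{cor:AC GLA} hold for all parameters because $s(L)=1$ and because of the generation identities for $W$, $S^{(1)}$, $H^{(2)}$, and then reduce both statements to the arithmetic of (G\,III), with the characteristic~$2$ case becoming the inequality $h(L)>(2^{l-1}+1)s(L)$ for $l=\max_i n_i$ (the $l$ for which $\ad(\partial)^{2^l}=0$ via (\ref{eq:partial p^n=0})). For part (2) you, like the paper, stop at ``carrying out the case split produces the listed exceptions'' without exhibiting the computation, which is the same level of detail as the paper's ``one can easily check.''

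The one genuine divergence is in part (1), and it is a gap. At the two characteristic-$3$ degenerations $W(1;1)$ and $H(2;\unit)^{(2)}$ one has $h(L)=1=s(L)$, so (G\,III) genuinely fails and your blanket claim that ``a direct inspection shows $h\ge 2$'' is false as written. You dispose of these cases by declaring them ``outside the scope'' because they coincide with classical algebras; but the corollary as stated does not exclude them, and the paper's proof explicitly keeps them in scope: it invokes the isomorphisms $W(1;1)\simeq\fsl_2$ and $H(2;\unit)^{(2)}\simeq\mathfrak{psl}_3$ of Remarks \ref{rem:Witt alg} and \ref{rem:H alg} and then applies Theorem \ref{thm:AC Sigma} to $\Sigma=\SL_n$. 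Note that you cannot instead fall back on Corollary \ref{cor:AC classical}, since that result assumes $k$ algebraically closed while the present corollary only assumes $k$ perfect; the appeal to Theorem \ref{thm:AC Sigma} (which works over perfect fields) is the needed fix. With that two-line addition your proof of (1) matches the paper's; the remaining remarks in your proposal (uniqueness of the choice of $U$, the coincidence $S(2;\bfn)=H(2;\bfn)$) are harmless bookkeeping not required for the statement.
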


\begin{proof}
Let $U\Def L_{-1}$ be taken as in the proof of Theorem \ref{thm:AC Cartan}. Then the conditions (G\,I) and (G\,II) in Corollary \ref{cor:AC GLA} are fulfilled with respect to  the subspace $U$. It suffices to check the condition (G\,III) in each cases. 

(1) It suffices to see the inequality $h(L)>s(L)$, which is valid unless $L=W(1;1)$ or $L=H(2;\unit)^{(2)}$ in characteristic $p=3$ case. If $p=3$ and $L=W(1;1)$ or $H(2;\unit)^{(2)}$, by Remarks \ref{rem:Witt alg} and \ref{rem:H alg}, the assertion is a consequence of Theorem \ref{thm:AC Sigma} for the Special linear groups $\Sigma=\SL_{n}$. This completes the proof. 

(2) Let $l\Def\max\{n_i\,|\,1\le i\le m\}$.
By (\ref{eq:partial p^n=0}), we have $\ad(U)^{p^l}=0$. It suffices to check the condition $h(L)>(2^{l-1}+1)s(L)$. One can easily check the inequality except in the listed cases.  
\end{proof}

In characteristic $p>5$ case, Theorem \ref{thm:AC Cartan} is unconditional, and hence we get the following consequence, which completes the proof of Theorem \ref{thm:main}.

\begin{cor}\label{cor:AC Cartan}
Let $k$ be an algebraically closed field of characteristic $p>5$. Then all finite local non-abelian simple $k$-group schemes belong to $\pi^\loc_A(\A^1_k)$.  
\end{cor}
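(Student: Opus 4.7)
My approach is to deduce the corollary as an essentially formal consequence of Theorem \ref{thm:AC Cartan}, using the classification of finite local non-abelian simple $k$-group schemes recorded in \S\ref{subsec:lie alg}. The only point to verify is that the single exceptional family excluded in Theorem \ref{thm:AC Cartan} disappears once we strengthen the hypothesis from $p>3$ to $p>5$.

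Concretely, I would begin by reducing to the Lie-algebraic side. Any finite local non-abelian simple $k$-group scheme $G$ is necessarily of height one: the Frobenius kernel $G_{(1)}\trianglelefteq G$ is a normal closed subgroup scheme, and since $G$ is local, nontrivial, and simple, we must have $G_{(1)}=G$. Under the equivalence of categories between finite local $k$-group schemes of height one and finite dimensional restricted Lie algebras over $k$, simplicity of $G$ translates (cf.\ the last paragraph of \S\ref{subsec:lie alg}) into simplicity of the restricted Lie algebra $\Lie(G)$. By Theorem \ref{thm:viviani}, every simple restricted Lie algebra over $k$ arises as the $p$-envelope $L_{[p]}$ (in its adjoint representation) of a unique simple Lie algebra $L$ over $k$. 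Thus there exists a simple Lie algebra $L$ over $k$ with
\[
G\simeq \mathfrak{G}(L_{[p]}).
\]

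To finish, I would invoke Theorem \ref{thm:AC Cartan}, which asserts that $\mathfrak{G}(L_{[p]})\in\pi^\loc_A(\A^1_k)$ for every simple Lie algebra $L$ over an algebraically closed field of characteristic $p>3$, subject to the sole proviso that, when $p=5$, the algebra $L$ is not isomorphic to a non-graded Hamiltonian algebra of first type $H(2r;\unit;\omega(\alpha))^{(\infty)}$ with weight $\bfn=\unit$ and degree $2r\ge 4$. Since by hypothesis $p>5$, this exceptional case is vacuous, so Theorem \ref{thm:AC Cartan} applies unconditionally to $L$, yielding $G\simeq \mathfrak{G}(L_{[p]})\in\pi^\loc_A(\A^1_k)$ as required. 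There is no genuine obstacle: all the real work has been done in Theorem \ref{thm:AC Cartan} together with the classification of simple Lie algebras (Theorem \ref{thm:KS conj}) and the Cartan-type case-analysis of \S\ref{sec:Cartan}; the role of the corollary is merely to observe that raising the characteristic threshold by two eliminates the last surviving obstruction.
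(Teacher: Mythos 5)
Your proposal is correct and follows exactly the paper's argument: the paper's own proof simply states that the corollary is immediate from Theorems \ref{thm:viviani} and \ref{thm:AC Cartan}, and your write-up spells out precisely that chain (height-one reduction, the bijection between simple Lie algebras and simple restricted Lie algebras, and the vacuity of the $p=5$ exception once $p>5$). No discrepancies to report.
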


\begin{proof}
This is immediate from Theorems \ref{thm:viviani} and \ref{thm:AC Cartan}. 
\end{proof}


{\small

\vspace{3mm}

\begin{flushleft}
Department of Mathematics, School of Engineering, Tokyo Denki University\\
5 Senju Asahi, Adachi, Tokyo 120-8551, Japan\\
\textit{E-mail adress}: \texttt{shusuke.otabe@mail.dendai.ac.jp}\\
\vspace{3mm}
Universit\`a degli Studi di Firenze, Dipartimento di Matematica e Informatica Ulisse Dini\\ 
Viale Morgagni 67/a, Firenze, 50134, Italy\\
\textit{E-mail adress}: \texttt{fabio.tonini@unifi.it}\\
\vspace{3mm}
The Chinese University of Hong Kong, Department of Mathematics\\ 
Shatin, New Territories, Hong Kong\\
\textit{E-mail adress}: \texttt{lzhang@math.cuhk.edu.hk}
\end{flushleft}

}

\end{document}